\newif\ifMOR
\definecolor{nontermcol}{RGB}{0,174,219}
\definecolor{termcol}{RGB}{0,177,89}
\definecolor{col0}{RGB}{31,119,180}
\definecolor{col1}{RGB}{255,127,14}
\definecolor{col2}{RGB}{44,160,44}
\definecolor{col3}{RGB}{214,39,40}
\definecolor{col4}{RGB}{148,103,189}
\definecolor{col5}{RGB}{140,86,75}
\definecolor{col6}{RGB}{227,119,194}
\definecolor{col7}{RGB}{127,127,127}
\definecolor{col8}{RGB}{188,189,34}
\definecolor{col9}{RGB}{23,190,207}
\definecolor{laminarScol}{RGB}{31,119,180}
\definecolor{laminarLcol}{RGB}{188,189,34}
\definecolor{laminarLLcol}{RGB}{227,119,194}
\tikzset{nonterm/.style={circle, draw=black, line width = .8mm, minimum size=.7cm}}
\tikzset{term/.style={circle, fill=gray!70, draw=black, line width = .8mm, minimum size=.7cm}}
\tikzset{edge1/.style={color = gray,line width = 4}}
\tikzset{edge2/.style={color = gray,line width = 8}}
\tikzset{edge3/.style={color = gray,line width = 12}}
\tikzset{laminarS/.style={color = laminarScol,line width = 2,rounded corners = 8}}
\tikzset{laminarL/.style={color = laminarLcol,line width = 2,rounded corners = 8}}
\tikzset{laminarLL/.style={color = laminarLLcol,line width = 2,rounded corners = 8}}
\newcommand{\laminarUnit}{.5}
\newcommand{\laminarOff}{.5}
\tikzset{pp_node/.style={circle, draw=black, line width = .8mm, minimum size=.7cm}}
\tikzset{pp_edge/.style={draw=black, line width = .8mm}}
\tikzset{case_term/.style={circle, fill=gray!70, draw=black, line width = .8mm, minimum size=.7cm}}
\tikzset{case_nonterm/.style={circle, draw=black, line width = .8mm, minimum size=.7cm}}
\tikzset{case_edge/.style={draw=black, line width = .8mm}}
\newif\ifINDICATECHANGES
    \newcommand{\changescolor}{olive}
\newcommand{\revrem}[2]{Rev{#1}:Rem{#2}}
\newcommand{\stkout}[1]{\ifmmode\text{\sout{\ensuremath{#1}}}\else\sout{#1}\fi}
\definecolor{col0}{RGB}{31,119,180}
\definecolor{col1}{RGB}{255,127,14}
\definecolor{col2}{RGB}{44,160,44}
\definecolor{col3}{RGB}{214,39,40}
\definecolor{col4}{RGB}{148,103,189}
\definecolor{col5}{RGB}{140,86,75}
\definecolor{col6}{RGB}{227,119,194}
\definecolor{col7}{RGB}{127,127,127}
\definecolor{col8}{RGB}{188,189,34}
\definecolor{col9}{RGB}{23,190,207}
\newcommand{\zerovec}{\mathbb{O}}
\newcommand{\R}{\mathbb R}
\newcommand{\Z}{\mathbb Z}
\DeclareMathOperator{\cut}{CUT}
\newcommand{\cutd}{\cut_+}
\newcommand{\incvec}[1]{\chi({#1})}
\DeclareMathOperator{\bigO}{O}
\newcommand{\setdef}[2]{ \{ {#1} \,:\, {#2} \} }
\DeclareMathOperator{\YOp}{Y}
\newcommand{\YDred}{\(\YOp\!\nabla\)-reduction}
\newcommand{\scale}{1}
\title{Steiner Cut Dominants}
\author{Michele Conforti}
\address[Michele Conforti]{Dipartimento Matematica, Universit\`{a} degli Studi di Padova, Via Trieste 63, 35121 Padova, Italy}
\email{conforti@math.unipd.it}
\author{Volker Kaibel}
\address[Volker Kaibel]{Fakult\"at f\"ur Mathematik, OVGU Magdeburg, Universit\"atsplatz 2, 39106 Magdeburg, Germany}
\email{kaibel@ovgu.de}
    \newtheorem{theorem}{Theorem}
    \newtheorem{lemma}{Lemma}
    \newtheorem{proposition}{Proposition}
    \newtheorem{corollary}{Corollary}
    \newtheorem{remark}{Remark}
    \newtheorem{definition}{Definition}
\date{\today}
\begin{document}

\ifMOR
    \input titleInfoMOR
    \ABSTRACT{%
        	For a subset $T$  of nodes of an undirected graph $G$, a  \emph{$T$-Steiner cut} is a cut $\delta(S)$ with $T \cap S \ne \varnothing$ and $T \setminus S \ne \varnothing$. The \emph{$T$-Steiner cut dominant} of $G$ is the dominant $\cutd(G,T)$ of the convex hull of the incidence vectors of the $T$-Steiner cuts of $G$. For $T=\{s,t\}$, this is the  well-understood $s$-$t$-cut dominant.  Choosing $T$ as the set of all nodes of $G$, we obtain the \emph{cut dominant}, for which an outer description in the space of the original variables is still not known. We prove that, for each integer $\tau$, there is a finite set  of inequalities such that for every pair $(G,T)$ with $|T|\le\tau$ the non-trivial 
 \replaced{facet defining }{facet-defining} inequalities of $\cutd(G,T)$ are the inequalities that can be obtained via iterated applications of two simple operations, starting from that set. In particular,  the absolute values of the coefficients and of the right-hand-sides in a description of $\cutd(G,T)$  by integral inequalities can be bounded from above by a function of $|T|$. For all $|T| \le 5$ we  provide   descriptions of $\cutd(G,T)$ by facet defining inequalities, extending the known descriptions of $s$-$t$-cut dominants. 

    }%
    \maketitle
\else
    \maketitle
    \begin{abstract}
        	
    \end{abstract}
\fi

 \section{Introduction}

Let $(G,T)$ be a \emph{Steiner graph}, i.e.,  $G$ is a connected graph with node set $V(G)$ and edge set $E(G)$, and $T \subseteq V(G)$ with $|T|\ge 2$ is a subset of at least two \emph{terminals}. 
For $S \subseteq V(G)$ we denote by $\delta_G(S)$ or $\delta(S)$ the \emph{cut} in $G$ defined by $S$, i.e., the set of all edges with one endnode in $S$ and the other one  in $V\setminus S$, where we use $\delta(v):=\delta(\{v\})$ to denote the \emph{star} of a node $v \in V(G)$.
A cut $\delta(S)$ \deleted{in $T$ } is called a \emph{$T$-Steiner cut} if both $T\cap S\ne \varnothing$ and $T \setminus S \ne\varnothing$ are non-empty.
 The $T$-Steiner cut polytope $\cut(G,T)$ is the convex hull of the incidence vectors $\incvec{\delta(S)}\in\{0,1\}^{E(G)}$ of the $T$-Steiner cuts in $G$ (with $\incvec{\delta(S)}_e=1$ if and only if $e \in \delta(S)$ holds).
 The \emph{$T$-Steiner cut dominant} is defined to be the \replaced{Minkowski-sum }{polyhedron}
 \[
 	\cutd(G,T) =\cut(G,T)  + \R_{\ge 0}^{E(G)}
 \]
\added{of $\cut(G,T)$ and the nonnegative orthant, }
 i.e. the polyhedron formed by all points $y$ that dominate some point $x \in \cut(G,T)$ in the sense of $y \ge x$. The dominant  of a polyhedron~$P$ is the polyhedron that is essential for minimizing linear functions with nonnegative  coefficients over $P$.
 \added[comment={\revrem{2}{2}}]{It is the largest polyhedron with the property that for all nonnegative objective function vectors $c$ the infimum of $cx$ over it equals the corresponding infimum over $P$.}
 
 In case of  $T=\{s,t\}$ for some  $s,t \in V(G)$, the polyhedron  $\cutd(G,\{s,t\})$ is the well understood  \emph{$s$-$t$-cut dominant of $G$}, for which  
 \begin{equation}\label{eq:st_cut_dom}
 	\cutd(G,\{s,t\}) = 
 		\big\{x \in \R_{\ge\zerovec{}}^{E(G)}\,:\,
 				x(P) \ge 1 
 				\text{ for all $s$-$t$-paths }P \subseteq E(G)\big\}
 \end{equation}
 can be deduced  via the max-flow min-cut theorem and flow decomposition (for a closer investigation of $s$-$t$-cut dominants see Skutella and Weber~\cite{SkutellaWeber2010}). For more than two terminals, however,  inequality descriptions of the Steiner cut dominants have not been known so far.  In particular, for $T=V(G)$ the polyhedron 
 \[
    \cutd(G) = \cutd(G,V(G))
\]
 is  the much less understood \emph{cut dominant of $G$} (see, e.g., Conforti, Fiorini, and Pash\-ko\-vich~\cite{ConfortiFioriniPashkovich2016}), whose facets are still unknown. Despite the fact that this maybe makes up the most prominent example of a  polynomial time solvable combinatorial optimization problem (see the remarks below) for whose associated polyhedron no inequality description is known, understanding the facets of cut dominants seems desirable also because of the following relation.
 By blocking duality \added[comment={\revrem{2}{2}}]{(see, e.g., \cite[Sect. 9.2]{Schrijver86})} the coefficient vectors of the non-trivial 
 \replaced{facet defining }{facet-defining} inequalities of $\cutd(G)$ are the vertices of the \emph{graphical subtour relaxation polyhedron} associated with $G$ (see, e.g., \cite{ConfortiFioriniPashkovich2016}). Among those inequalities, the ones with the property that---when scaled such that the right-hand-side equals two---\replaced{for each node the components associated with the incident edges sum up to two }{the components sum up to two over each star} are the vertices of the \emph{subtour relaxation polytope} of $G$, which (usually considered for the  complete graph $K_n$ on $n$ nodes) makes up the most important relaxation of the \emph{traveling salesman polytope}. Interest in the vertices of the latter relaxation (thus, in the facets of $\cutd(K_n)$) has been raised, e.g., by the \emph{4/3-conjecture} on the quality of the \emph{subtour relaxation bound} for the metric traveling-salesman problem (see, e.g., Goemans~\cite{Goemans95}). 
  
 The \emph{minimum Steiner cut problem}, i.e.\added[comment={\revrem{2}{4}}]{,} the task to find for nonnegative edge weights a $T$-Steiner cut of minimal weight,  can obviously be solved in polynomial time by finding (in polynomial time) minimum $s$-$t$ cuts, e.g.,  by computing maximum $s$-$t$-flows, for $t=t_1,\dots,t_{|T|-1}$.  
 That optimization problem arises, for instance, as the separation problem associated with the inequalities in standard integer programming formulations for the \emph{Steiner tree problem} (see, e.g., Letchford, Saeideh, and Theis~\cite{LetchfordSaeidehTheis2013} or Fleischmann~\cite{Fleischmann1985}).  In fact, there has been great progress recently both w.r.t.  reducing the number of max-flow computations in the approach mentioned above (by Li and Panigrahi~\cite{LiPanigrahi2020})  as well as in solving the max-flow problem (by Chen, Kyng, Liu, Peng, Gutenberg, and Sachdeva~\cite{ChenKyngLiuPengGutenbergSachdeva2022}), resulting in a randomized algorithm for computing minimum Steiner cuts in graphs $G$ that runs in time $|E(G)|^{1+\text{o}(1)}$ with high probability (where the weights are assumed to be integral numbers whose sizes are bounded by a polynomial in $|E(G)|$, see also the remarks in the updated version of~\cite{LiPanigrahi2021} and the recent  improvement~\cite{DingL23}). For the minimum Steiner cut problem in \emph{planar}  graphs, Jue and Klein~\cite{JueKlein2019} designed a deterministic algorithm whose running time can be bounded by  $\bigO(|V(G)|\cdot\log|V(G)|\cdot\log|T|)$.

 
 Using the fact that the set of $T$-Steiner cuts is the union of the sets of $s$-$t_i$-cuts for $|T|-1$ pairs $(s,t_i)$ and 
 following Balas' disjunctive programming paradigm~\cite{Balas2018} one can easily come up with an \emph{extended formulation} for $\cutd(G,T)$ based on~\eqref{eq:st_cut_dom}. When using flow-based extended formulations for the $s$-$t$-cut dominants instead of~\eqref{eq:st_cut_dom} one even obtains an extended formulation for $\cutd(G,T)$ with both $\bigO (|T| \cdot |E(G)|)$ variables and constraints. In fact, 
 striving for further improving the size of the representation (at least for dense graphs), Carr, Konjevod, Little, Natarajan, and Parekh~\cite{CarrKonjevodLittleNatarajanParekh2009}  introduced a polyhedron in $\bigO(|V|^2)$-dimensional space described by $\bigO(|T| \cdot |V|^2)$ many inequalities for which the dominant of its projection to the ambient space of $\cutd(G,T)$ equals $\cutd(G,T)$. Consequently, finding a minimum $T$-Steiner cut w.r.t. nonnegative weights can be done by solving a linear program over that polyhedron. (Actually, they provide a construction for $T=V(G)$ only, but that one can be readily generalized to arbitrary terminal sets.) 
 
 In this paper, however, we are not concerned with deriving algorithms for computing \replaced[comment={\revrem{2}{7}}]{minimum }{minmum} weight Steiner cuts or in designing extended formulations, but rather in the facets of Steiner cut dominants, i.e., we search for descriptions of Steiner cut dominants by means of inequalities for their ambient spaces $\R^{E(G)}$. 
  The difficulty of deriving such inequality descriptions may  become apparent from the fact that even for the cut dominants of planar graphs such descriptions  are not yet known, in contrast to the situation for the cut \emph{polytopes} of planar graphs which are described by the \replaced[comment={\revrem{2}{8}}]{\emph{cycle inequalities} }{\emph{cycle inqualities}} (see Barahona and Mahjoub~\cite{BarahonaMahjoub1986}).  
 
  One approach that has been taken in order to understand better the inequalities needed in descriptions of cut dominants is to classify them according to their right-hand-sides when scaled to be in \emph{minimum integer form}, i.e., such that  their non-zero coefficients are relatively coprime positive integers. Conforti, Fiorini, and Pashkovich~\cite{ConfortiFioriniPashkovich2016} derived a forbidden-minor characterization of the graphs $G$ for which $\cutd(G)$ has a description by inequalities with right-hand-sides at most two. 
   In fact, we will use that characterization later (see Theorem~\ref{thm:CFP}).

Besides appearing to be of independent interest, the concept of Steiner cut dominants offers another  approach to classify facet defining inequalities for cut dominants. In view of the fact that every inequality that is valid for $\cutd(G)$ is obviously valid for $\cutd(G,T)$ for every choice $T \subseteq V(G)$ of terminals (in general, for $T_1 \subseteq T_2$ we obviously have $\cutd(G,T_1) \subseteq \cutd(G,T_2)$), we define the \emph{Steiner degree} of an inequality defining a facet of $\cutd(G)$ to be the 
\replaced[comment={\revrem{2}{10}}]{minimum }{minimal} 
$\tau$ for which there is some $T \subseteq V(G)$ with $|T|=\tau$ such that the inequality defines a facet of $\cutd(G,T)$. Of course, this notion of \emph{Steiner degree} can be transfered readily to the vertices of subtour relaxation polyhedra and polytopes via the above mentioned blocking duality. 

If an inequality defining a facet of $\cutd(G,T)$ is  valid for $\cutd(G)$, then it clearly is a facet defining inequality for $\cutd(G)$ of Steiner degree at most $|T|$ (as both polyhedra are full-dimensional). For instance, the inequalities in~\eqref{eq:st_cut_dom} that arise from \emph{Hamiltonian} $s$-$t$-paths are facet defining inequalities for $\cutd(G)$ of Steiner degree two. A consequence (see Theorem~\ref{thm:bounded_rhs}) of our first main result is that the right-hand-side (and the coefficients) of any  facet defining  inequality in minimum integer form for a cut dominant is bounded from above by a function depending only on its Steiner degree. Note that the reverse of that statement does not hold, as, e.g.,  for every spanning tree $Q\subseteq E(G)$ of $G$  the inequality $x(Q) \ge 1$ defines a facet of $\cutd(G)$ whose  Steiner degree equals the number of leaves of $Q$ (which follows from Part~(2) of Proposition~\ref{prop:facets_first_properties} and the remarks at the beginning of Section~\ref{sec:trees_and_cacti}). A consequence of the second main result of our paper will be a classification of the 
facet defining inequalities for cut dominants of Steiner degree at most five (see Corollary~\ref{cor:Steiner_degree_at_most_five}).

Before we develop our results precisely in the following sections, we provide  informal  descriptions of them. 
\begin{description}
	\item[Main Result I.] 
	We discuss two operations (\emph{subdividing} and \emph{gluing}) that produce facet defining inequalities from facet defining inequalities and show that for each $\tau$ there is a finite set of inequalities from which, for every Steiner cut dominant with  $\tau$ terminals, a description by a system of facet defining  inequalities can be obtained by repeated applications of those two operations (see Remark~\ref{rem:reduction_to_irreducibles} and Theorem~\ref{thm:bounded_number_terminals_irreducible}).   
	\item[Main Result II.]
	We introduce two classes of inequalities (\emph{Steiner tree inequalities} and \emph{Steiner cactus inequalities}) for which we prove that each Steiner cut dominant with at most five terminals is described by the inequalities from those two classes (see Remark~\ref{rem:reduction_to_facet_graphs} and  Theorem~\ref{thm:facets_three_four_five}).  
\end{description}

The paper is structured in the following way. We start  in Section~\ref{sec:basics} by defining some notions and stating some basic results on Steiner cut dominants, most of which are well-known in the more general context of \emph{dominant} polyhedra  (i.e., polyhedra whose recession cones are non-negative orthants). In Section~\ref{sec:laminar}
 we establish a crucial structural result on every (non-trivial) facet of a Steiner cut dominant: It contains a subset of vertices whose associated Steiner cuts are induced by a laminar family of node sets with as many  members as an inequality describing the facet has  nonzero coefficients. This result generalizes a corresponding one for cut dominants\added[comment={\revrem{2}{9}}]{ (following from \cite[Thm. 4.9]{CornuejolsFN85} via blocking duality)}, but is somewhat more delicate to prove. The most important consequence of this laminarity result in our context is that the number of nonzero coefficients in inequalities defining facets of Steiner cut dominants is bounded from above by the number of nodes of the graph plus the number of terminals minus three. In Section~\ref{sec:split_glue} we then introduce the two operations \emph{subdividing} and \emph{gluing} referred to above, before we state and prove the two main results in Section~\ref{sec:main:I} and~\ref{sec:main:II}. We conclude with some open questions in Sect~\ref{sec:conclusion}.  

\added{%
\noindent\textbf{Acknowledgements.}
We would like to thank two anonymous reviewers for their  suggestions that helped to further improve the presentation of the material.
}
 
\section{Facet inducing Steiner graphs}
\label{sec:basics}


As the recession cone of $\cutd(G,T)$ is the nonnegative orthant, if $cx\ge \gamma$ is an inequality which is 
valid
for $\cutd(G,T)$, then $c$ is nonnegative.
We refer to  $c_e$ 
or $c(e)$
as the \emph{$c$-weight} of  edge $e$, and denote by  $\gamma_c(G,T)\ge \gamma$ the minimum $c$-weight of any $T$-Steiner cut  in $(G,T)$. We say that the vector $c$ is in \emph{minimum integer form} if its components are nonnegative integers whose greatest common divisor equals one. The \emph{minimum integer form} of a nonnegative rational non-zero vector is its unique scalar multiple that is in minimum integer form. 
We can (and always will) assume that  $cx\ge \gamma$, i.e. the coefficient vector $c$,  is in minimum integer form. 
We furthermore will always assume that 
$\gamma = \gamma_c(G,T) \ge 1$
holds (thus excluding nonnegativity constraints from our considerations). The $T$-Steiner cuts with $c$-weight equal to $\gamma_c(G,T)$ are called the \emph{roots} of $c$ and of the inequality. We sometimes refer to $\gamma_c(G,T)$ as the 
\emph{right-hand side} of $c$.

The graph $G_c=(V_c,E_c)$ is  the  subgraph of $G$  where $E_c$ is the set of all edges with non-zero (thus positive) $c$-weights, and $V_c$ is the set of all endpoints of edges in $E_c$. 
 We clearly have 
$\gamma_c(G,T) = \gamma_c(G_c,T)$.
The following statements  follows from the fact that $\cutd(G,T)$ is a dominant polyhedron in the nonnegative orthant that is full-dimensional.  Whenever we make statements about Steiner cuts in a linear algebra context those statements refer to the corresponding incidence vectors. 

\begin{remark} 
\label{rem:Gc}
For each  inequality $cx\ge \gamma=\gamma_c(G,T)$ that is valid  for $\cutd(G,T)$ the following hold:
\begin{enumerate}
    \item If $\gamma=0$, then the inequality defines a facet of $\cutd(G,T)$ 
    if and only if it is a nonnegativity constraint.
    \item If $\gamma>0$,  then the inequality defines a facet of $\cutd(G,T)$ if and only if $(G_c,T)$ has $|E_c|$ many $c$-minimum $T$-Steiner cuts that are 
    linearly independent (which is equivalent to them being affinely independent, since the affine hull of the face defined by $cx\ge\gamma$ does not contain the origin due to $\gamma\ne 0$).
\end{enumerate}
\end{remark}

If $cx\ge\gamma$  in minimum integer form defines a facet of $\cutd(G,T)$ with $G_c=G$ and  $\gamma=\gamma(G,T) > 0$, then  $(G,T)$ is called a \emph{facet inducing Steiner graph}, and we refer to $c$  as \emph{facet weights} for $(G,T)$. Due to Remark~\ref{rem:Gc}, a vector $c$ provides facet weights for $(G,T)$  if and only if $c$ has minimum integer form and there is a \emph{root basis} of $c$, i.e., a basis of $\R^{E(G)}$ consisting of roots of $c$.

A \emph{Steiner subgraph} of a Steiner graph $(G,T)$ is a Steiner graph $(G',T')$ with the same terminal set $T'=T$ and $G'$ being a subgraph of $G$.


\begin{remark}
\label{rem:reduction_to_facet_graphs}
    In order to determine an inequality description of $\cutd(G,T)$ for some Steiner graph $(G,T)$ it suffices due to Remark~\ref{rem:Gc}  to find all  facet weights for all facet inducing Steiner subgraphs of $(G,T)$.
\end{remark}

Therefore, we will concentrate on identifying facet inducing Steiner graphs and their facet weights subsequently. It will be one of the consequences of our results that for $|T|\le 5$ the facet weights of a facet inducing Steiner graph $(G,T)$ are uniquely determined. 
However, for larger values of $|T|$ 
this does not hold in general (see  Section~\ref{sec:conclusion}).

In the following remarks we collect a few useful observations on facet inducing Steiner graphs, where for a node $v\in V(G)$ of a graph $G$ we denote by $G \setminus v$ the graph $(V(G)\setminus\{v\},E(G)\setminus\delta(v))$.

\ifINDICATECHANGES
    {\color{\changescolor}%
\fi
\begin{proposition}\label{prop:facets_first_properties}
 \added[comment={\revrem{2}{11}}]{}
 For every facet inducing Steiner graph  $(G,T)$  with facet weights $c$ the following hold:
	\begin{enumerate}
		\item $G$ is connected.
		\item Every node in $V(G)\setminus T$ has degree at least two.
		\item The facet of $\cutd(G,T)$ defined by  $cx\ge\gamma_c(G,T)$  is bounded.
		\item If $\delta(S)$ is  a root of $c$, then both $S$ and $V(G)\setminus S$ induce connected subgraphs of $G$.
		\item Every edge of $G$ is contained in at least one root of $c$.
		\item For every $v \in V(G)$, each component of $G \setminus v$ contains a terminal from $T$. 
		\item For every  $e \in E(G)$, we have $c(e) \le \gamma_c(G,T)$ with  equality  if and only if $e$ is a bridge (i.e., removing the edge $e$ from $G$ results in a disconnected graph).
	\end{enumerate}
\end{proposition}
\ifINDICATECHANGES
    }%
\fi

\ifINDICATECHANGES
    {\color{\changescolor}%
\fi
\begin{proof}
    \added[comment={\revrem{2}{11}}]{}%
    Let $F$ be the facet of $\cutd(G,T)$ defined by $cx\ge\gamma_c(G,T)$. 
    
    Due to $\gamma_c(G,T)>0$ all terminals are contained in the same  connected component of $G$. Suppose $\varnothing \ne W\subseteq V(G)$ and $E(W) \subseteq E(G)$ are  the node and edge  sets, respectively, of some connected component of $G$ with $W \cap T = \varnothing$.
    Due to $G_c=G$ we have $|W|\ge 2$, hence there is some $e_0 \in E(W)$. There cannot be a root $\delta(S)$  of $F$ with $e_0 \in \delta(S)$, because  for such a root $\delta(S)\setminus E(W)$ would be a $T$-Steiner cut with $c$-weight smaller than $c(\delta(S))=\gamma_c(G,T)$ (here, and also in some following arguments, we use that $c$ is all-positive). Thus $F$ is contained in the face defined by $x_{e_0}=0$, contradicting the fact that $F$ is a facet. Hence $G$ is connected.
    \added[comment={\revrem{1}{1}}]{The second statement follows similarly, as for a non-terminal node with only one incident edge 
    no root of $F$ could contain that edge.} 

    The third statement is due to the fact that the exreme rays of $\cutd(G,T)$ are contained in the nonnegative orthant (in fact, they are generated by the standard unit vectors). 

    Let $\delta(S)$ be a root of $c$.  If $S$ does not induce a connected subgraph of $G$ then there is a proper subset $S' \subsetneq S$ with $T\cap S' \ne \varnothing$ and $\delta(S') \subsetneq \delta(S)$, hence $\delta(S')$ is a $T$-Steiner cut with smaller $c$-weight than $c(\delta(S))=\gamma_c(G,T)$. If $V(G) \setminus S$ does not induce a conneted subgraph, a similar contradiction to the validity of $cx\ge\gamma_c(G,T)$ for $\cutd(G,T)$ is implied. Thus the fourth claim is established.

    The \replaced{fifth }{fivth} statement is clear since if there was an edge $e$ not contained in any root of $c$ than $F$ would be contained in the face defined by $x_e=0$.

    To prove the sixth claim assume that $v$ is a node such that $W \subseteq V(G) \setminus \{v\}$ with $W \cap T = \varnothing$ is the node set of a connected component of $G\setminus v$. Due to $|T| \ge 2$ we have $W \subsetneq V(G)\setminus \{v\}$, and due to the first statement, $G$ is connected. Thus there must be at last one edge connecting $v$ to $W$. Hence the set $E(W \cup \{v\})$ of edges of $G$ having both nodes in $W \cup \{v\}$ is not empty, again implying a contradiction to the validity of $cx \ge \gamma_c(G,T)$, because for every root $\delta(S)$ of $c$ we have that $\delta(S) \setminus E(W \cup\{v\})$ is a $T$-Steiner cut as well.

    Finally, let $e \in E(G)$ be some edge of $G$. According to the \replaced{fifth }{fivth} statement there is a $T$-Steiner cut $\delta(S)$ with 
    $c(\delta(S))=\gamma_c(G,T)$ and
    $e \in \delta(S)$. The inequality $c(e) \le \gamma_c(G,T)$ follows  readily from the nonnegativity of $c$, where in fact equality implies $\delta(S) = \{e\}$ (since $c$ is even all-positive) thus $e$ being a bridge. Conversely, let $e' = u'w'\in E(G)$ be a bridge of $G$ with $U' \ni u'$ and $W' \ni w'$ being the node sets of the two connected components into which the connected graph $G$ breaks up when removing the bridge $e'$. Applying the sixth statement to $u'$ and to $w'$ one finds $W' \cap T \ne \varnothing$ and $U' \cap T \ne \varnothing$, respectively. Hence, $\{e'\}=\delta(U')=\delta(W')$ is a root of $c$ implying $c(e') \ge \gamma_c(G,T)$, thus $c(e') = \gamma_c(G,T)$.
\end{proof}
\ifINDICATECHANGES
    }%
\fi



\section{Laminar families}
\label{sec:laminar}

For a subset  $A\subseteq V$ of a  ground set $V$ we denote by  $\bar{A}:=V\setminus A$ the \emph{complement}  of $A$. Two sets 
   $A$ and $B$ \emph{intersect}  if 
$A \cap B$, $A \setminus B$, and $B \setminus A $ are all nonempty. 

A family $\mathcal{L}$ of subsets of $V$ (i.e., a set of pairwise distinct subsets of $V$) is \emph{laminar} if no two sets in $\mathcal{L}$ intersect. 
 That is, $\mathcal{L}$ is laminar if and only if  each pair of sets is either disjoint or comparable w.r.t. inclusion. We define $\mathcal{L}_{\min}$ to be the subfamily consisting of the inclusionwise minimal members of $\mathcal{L}$. If $\mathcal{L}$ is a laminar family, then the members of   $\mathcal{L}_{\min}$  have pairwise empty intersections.

\begin{lemma}\label{lem:boundLaminarSets}
Every laminar family $\mathcal{L}$ of distinct nonempty  subsets of $V\ne\varnothing$ satisfies 
		$|\mathcal{L}|\le |V|+|\mathcal{L}_{\min}|-1$.
\end{lemma}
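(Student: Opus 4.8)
The plan is to encode the laminar family as a rooted forest and then bound the number of \emph{non-minimal} members by a counting (charging) argument. First I would associate with $\mathcal{L}$ the forest $F$ whose nodes are the members of $\mathcal{L}$ and in which the parent of a set $A$ is the inclusionwise smallest member of $\mathcal{L}$ that properly contains $A$ (with $A$ being a root if it is inclusionwise maximal). By laminarity this is well defined, the leaves of $F$ are exactly the members of $\mathcal{L}_{\min}$, and the children $B_1,\dots,B_k$ of any node $A$ are pairwise disjoint subsets of $A$ (being pairwise incomparable members of a laminar family). Writing $\ell=|\mathcal{L}_{\min}|$ for the number of leaves, $m$ for the number of non-minimal (internal) members and $r\ge 1$ for the number of roots (the maximal members), the claim $|\mathcal{L}|=m+\ell\le |V|+\ell-1$ reduces to showing $m\le |V|-1$.

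For the counting I would attach to each internal node $A$ with children $B_1,\dots,B_k$ (so $k\ge 1$) the two nonnegative quantities $\mathrm{extra}(A):=|A|-\sum_{j=1}^{k}|B_j|=|A\setminus\bigcup_j B_j|$ and the \emph{branching excess} $k-1$. The key elementary observation is that $\mathrm{extra}(A)+(k-1)\ge 1$ for every internal node: if $k\ge 2$ the branching excess already does the job, while if $k=1$ the single child is a \emph{proper} subset of $A$, forcing $\mathrm{extra}(A)\ge 1$. Summing this inequality over all internal nodes yields $m\le \sum_{A\text{ internal}}\mathrm{extra}(A)+\sum_{A\text{ internal}}(k_A-1)$.

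Finally I would evaluate the two sums by telescoping along $F$. Counting child--parent edges shows $\sum_{A\text{ internal}}(k_A-1)=\ell-r$. For the other sum, unfolding the identity $|A|=\mathrm{extra}(A)+\sum_{\text{children }B}|B|$ down to the leaves yields, for each root $R$, that $|R|$ equals the sum of $\mathrm{extra}$ over the internal descendants of $R$ plus the sizes of the leaves below $R$; adding this over the (pairwise disjoint) roots gives $\sum_{A\text{ internal}}\mathrm{extra}(A)+\sum_{L\in\mathcal{L}_{\min}}|L|=\big|\bigcup_R R\big|\le |V|$. Since each leaf satisfies $|L|\ge 1$, the first sum is at most $|V|-\ell$, and combining the two estimates gives $m\le (|V|-\ell)+(\ell-r)=|V|-r\le |V|-1$, as desired. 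I expect the only real subtlety to be the single-child case in the key observation (which is exactly where the ``$\mathrm{extra}$'' term is needed rather than branching alone), together with keeping the telescoping bookkeeping of roots, leaves and internal nodes consistent; the degenerate cases $\mathcal{L}=\varnothing$ and $V\in\mathcal{L}$ need only a quick separate check.
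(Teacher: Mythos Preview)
Your argument is correct. The forest encoding is sound, the key observation $\mathrm{extra}(A)+(k-1)\ge 1$ handles precisely the single-child case, and the two telescoping identities $\sum_A(k_A-1)=\ell-r$ and $\sum_A\mathrm{extra}(A)=\big|\bigcup_R R\big|-\sum_{L\in\mathcal{L}_{\min}}|L|$ are both right (the roots are pairwise disjoint because maximal members of a laminar family cannot intersect). The degenerate case $\mathcal{L}=\varnothing$ is trivially covered since $|V|\ge 1$.

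The paper takes a genuinely different route. Rather than charging on the forest, it argues extremally: the inequality is equivalent to $|\mathcal{L}|-|\mathcal{L}_{\min}|\le |V|-1$, so one may replace $\mathcal{L}$ by a laminar family maximizing $|\mathcal{L}|-|\mathcal{L}_{\min}|$ (and, among those, with $|\mathcal{L}|$ largest). For such a family one checks that $\mathcal{L}_{\min}$ must consist of all singletons of $V$, whence $|\mathcal{L}_{\min}|=|V|$ and the claim reduces to the well-known bound $|\mathcal{L}|\le 2|V|-1$. Thus the paper's proof is shorter \emph{if} one is willing to quote the $2|V|-1$ bound as a black box, whereas your argument is fully self-contained (indeed, it reproves that bound as the special case $\ell=|V|$). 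Your approach also yields the marginally sharper inequality $|\mathcal{L}|-|\mathcal{L}_{\min}|\le |V|-r$, with $r$ the number of maximal members of $\mathcal{L}$, which is invisible in the paper's reduction.
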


\begin{proof} 
Since the inequality in the lemma is equivalent to $|\mathcal{L}|-|\mathcal{L}_{\min}|\le |V|-1$, it suffices to establish it for a 
laminar family $\mathcal{L}$ of distinct nonempty subsets of $V$ that maximizes $|\mathcal{L}| - |\mathcal{L}_{\min}|$ and that has  $|\mathcal{L}|$  largest possible among all maximizers. 

It is easy to see that for such a family $|\mathcal{L}_{\min}|$ consists of all singletons of $V$. 
Hence we have $|\mathcal{L}_{\min}|=|V|$, and the claim follows from the well-known (and easy to prove) fact that a laminar family of non-empty pairwise distinct subsets of $V$ cannot have more than $2|V|-1$ members.
\end{proof}

For the proof of Theorem~\ref{thm:laminarRootBasis} below (which follows arguments outlined  in~\cite{LauRaviSingh2011}) the following  two observations are  very useful. 

\begin{lemma}
    \label{lem:triple_intersection}
   If the sets $A$ and $B$ intersect, and the set $C$ intersects at least one of the sets 
  \[ A \cap B, \quad
        A \cup B, \quad
        A \setminus B, \quad
        B \setminus A\,,\]
    then $C$ and $A$  intersect or $C$ and $B$ intersect. 
\end{lemma}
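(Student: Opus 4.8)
The plan is to prove Lemma~\ref{lem:triple_intersection} by direct case analysis, exploiting the fact that ``$X$ and $Y$ intersect'' is the symmetric statement that all three of $X\cap Y$, $X\setminus Y$, $Y\setminus X$ are nonempty. I would first fix notation: write $a := A\setminus B$, $m := A\cap B$, $b := B\setminus A$, and $o := V\setminus(A\cup B)$ for the four regions of the Venn diagram determined by $A$ and $B$. Since $A$ and $B$ intersect, we know $a$, $m$, and $b$ are all nonempty; the region $o$ may or may not be empty. Every set appearing in the hypothesis is a union of these regions: $A\cap B = m$, $A\cup B = a\cup m\cup b$, $A\setminus B = a$, and $B\setminus A = b$, while $A = a\cup m$ and $B = m\cup b$. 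The goal is to show that if $C$ intersects one of the four listed sets, then $C$ intersects $A$ or $C$ intersects $B$.

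The key observation I would isolate is a contrapositive-style dichotomy: if $C$ intersects neither $A$ nor $B$, then by definition of ``intersect'' failing, for each of $A$ and $B$ one of the three defining conditions must fail. Concretely, $C$ not intersecting $A$ means $C\cap A=\varnothing$, or $C\subseteq A$, or $A\subseteq C$; and similarly for $B$. Rather than enumerate all nine combinations, I would argue it is cleaner to assume $C$ intersects one of the four sets $X\in\{m,\,a\cup m\cup b,\,a,\,b\}$ and directly derive an intersection with $A$ or $B$. For the three sets $m$, $a$, $b$ that are each contained in $A$ or in $B$ (namely $m,a\subseteq A$ and $m,b\subseteq B$), the implication is almost immediate from the monotonicity of one of the three intersection conditions: if $C$ intersects a subset of $A$, then $C\cap A\supseteq C\cap X\neq\varnothing$ and $C\setminus A\subseteq C\setminus X$—but this last containment goes the wrong way, so I would instead reason that $C$ intersecting $X$ gives a point of $C$ inside $X\subseteq A$ (hence $C\cap A\neq\varnothing$) and a point of $C$ outside $X$; the latter need not lie outside $A$, so one has to combine with a point of $A\setminus C$. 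The real content is therefore confined to producing the ``outside'' witnesses, and this is where I expect the bookkeeping to concentrate.

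The main obstacle will be the case $X = A\cup B = a\cup m\cup b$, since this set is not contained in either $A$ or $B$, so a point of $C$ in $X$ could lie in $a$ (giving $C\cap A\neq\varnothing$) or in $b$ (giving $C\cap B\neq\varnothing$) without directly feeding the other required witnesses. I would handle this by a short subsidiary argument: suppose for contradiction that $C$ intersects $A\cup B$ but intersects neither $A$ nor $B$. Then $C\cap A\neq\varnothing$ would already force intersection unless $C\supseteq A$ or $C\subseteq A$, and likewise for $B$; running through the surviving possibilities (e.g. $A\subseteq C$ together with $B\subseteq C$ forces $A\cup B\subseteq C$, contradicting that $C$ meets $A\cup B$ only if $C$ also misses its complement, etc.) I would extract the needed nonempty pieces $C\setminus A$ or $C\setminus B$ from a witness point lying in $o=V\setminus(A\cup B)$ or from one of $a,b$. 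Because $a,m,b$ are guaranteed nonempty, there is always a point of $A\setminus C$ or of $B\setminus C$ available whenever $C$ does not swallow the whole of $A$ or $B$, and when $C$ does contain all of $A$ (say), then $C$ already intersects $B$ using the nonempty $m\subseteq B\cap C$ and the nonempty $b$.

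Overall I expect the proof to be a finite, mechanical verification once the four-region decomposition and the three-condition definition of ``intersect'' are fixed; the only delicate point is ensuring that in each branch one produces both a common point and a genuine ``leftover'' point on each side, using the standing nonemptiness of $a$, $m$, $b$. I would organize the write-up as a handful of short cases keyed to which of the four sets $C$ meets, reducing each to the single nontrivial case $X=A\cup B$, and present the latter via its contrapositive for brevity.
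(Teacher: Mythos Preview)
Your proposal is correct and follows essentially the same approach as the paper: a direct case split on which of the four sets $C$ intersects, using only the three-clause definition of ``intersect'' and the standing nonemptiness of $A\cap B$, $A\setminus B$, $B\setminus A$. The paper's write-up is simply more streamlined---it dispenses with the Venn-diagram bookkeeping $a,m,b,o$ and handles each case in two or three lines by directly tracking which of $C\cap A$, $C\setminus A$, $A\setminus C$ (and their $B$-analogues) are forced nonempty, invoking the $A\leftrightarrow B$ symmetry to collapse the $A\setminus B$ and $B\setminus A$ cases into one.
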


\begin{proof}
    If $C$ and $A\cap B$ intersect, then we have
    $C \cap A \ne \varnothing$ and $C\cap B \ne \varnothing$, neither $A\subseteq C$ nor $B \subseteq C$, and 
    not both $C \subseteq A$ and $C \subseteq B$. 
    Hence $C$ and $A$ or $C$ and $B$ intersect.
    
    If $C$ and $A\cup B$ intersect, then neither $C \subseteq A$ nor $C \subseteq B$ holds. Due to the symmetry of the statement in the lemma, we may assume $C \cap A \ne \varnothing$. Thus, if   $A \not\subseteq C$ is true, $C$ and $A$ intersect. Otherwise (i.e., $A \subseteq C$ holds), $C\cap B \ne \varnothing$ is non-empty as well (since then, as $A$ and $B$ intersect, we  have $\varnothing \ne A\cap B \subseteq C$)  and $B\not\subseteq C$ holds (due to $A\cup B \not\subseteq C$), thus $C$ and $B$ intersect.
    
    As the statement of the lemma is symmetric in $A$ and $B$, it remains to consider the case that $C$ and $A \setminus B$ intersect. In this case we have $C\cap A \ne\varnothing$ and $A \not\subseteq C$. Hence, if $C$ and $A$ do not intersect, $C \subseteq A$ holds, which, however,  implies that $C$ and $B$ intersect since $C$ and $A \setminus B$ intersect. 
 \end{proof}

 For a family $\mathcal{L}$ of subsets of $V$ and some \replaced{$W \subseteq V$}{$S\subseteq V$}, we 
 define
 \begin{equation*}
     I(\replaced{W}{S},\mathcal{L}) = \{L \in \mathcal{L}\,:\, \replaced{W}{S} \text{ and } L \text{ intersect}\}.
 \end{equation*}
 \added[comment={\revrem{2}{14}}]{If $\mathcal{L}$ is laminar and $W$ is not contained in $\mathcal{L}$, then $I(W,\mathcal{L})$ is the subfamily that prevents $W$ from being added to $\mathcal{L}$ while maintaining laminarity.}

\begin{lemma}\label{lem:laminar}
\replaced{If $\mathcal{L}$ is a laminar family of subsets of $V$, and $L\in\mathcal{L}$ and $S\subseteq V$ intersect}{If $L \in \mathcal{L}$ is a member of a laminar family $\mathcal{L}$ of subsets of $V$, and $S\subseteq V$ is some set such that $S$ and $L$  intersect}, then  we have
	\begin{equation*}
		I(S \cap L,\mathcal{L}) , \quad
		I(S \cup L,\mathcal{L}) , \quad
		I(S \setminus L,\mathcal{L}) , \quad
		I(L \setminus S,\mathcal{L}) 
		\quad\subsetneq \quad
		I(S,\mathcal{L})\,.
	\end{equation*}
\end{lemma}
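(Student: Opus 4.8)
The plan is to establish, uniformly for each of the four uncrossed sets $S \cap L$, $S \cup L$, $S \setminus L$, and $L \setminus S$ (call a generic one $X$), both that $I(X,\mathcal{L}) \subseteq I(S,\mathcal{L})$ and that this inclusion is strict. For the strict part I expect the member $L$ itself to serve as the separating witness: it will lie in $I(S,\mathcal{L})$ but in none of the four sets $I(X,\mathcal{L})$.

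For the inclusions I would apply Lemma~\ref{lem:triple_intersection} with $A := S$ and $B := L$, which intersect by hypothesis. Fix any $M \in I(X,\mathcal{L})$, so that $M \in \mathcal{L}$ and $M$ intersects $X$. Since $X$ is one of $A \cap B$, $A \cup B$, $A \setminus B$, $B \setminus A$, that lemma (with $C := M$) yields that $M$ intersects $S$ or $M$ intersects $L$. This is where laminarity is essential: as both $L$ and $M$ belong to the laminar family $\mathcal{L}$, they cannot intersect---a set never intersects itself, and two distinct members of a laminar family are non-intersecting by definition. Hence $M$ must intersect $S$, i.e.\ $M \in I(S,\mathcal{L})$. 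Since $M$ was arbitrary, $I(X,\mathcal{L}) \subseteq I(S,\mathcal{L})$ holds for all four choices of $X$ simultaneously.

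For strictness I would note that $L \in I(S,\mathcal{L})$, because $S$ and $L$ intersect by assumption, whereas $L \notin I(X,\mathcal{L})$ for each of the four uncrossed sets. The latter is a routine check of empty set differences: each $X$ is nested with $L$ or disjoint from it rather than intersecting it, since $S \cap L \subseteq L$, $L \setminus S \subseteq L$, $L \subseteq S \cup L$, and $(S \setminus L) \cap L = \varnothing$. In each case one of the set differences required for intersecting is empty, so $L$ does not intersect $X$, giving $L \notin I(X,\mathcal{L})$ and hence $I(X,\mathcal{L}) \subsetneq I(S,\mathcal{L})$.

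The only genuinely delicate step is invoking laminarity to discard the alternative ``$M$ intersects $L$'' delivered by Lemma~\ref{lem:triple_intersection}; everything else is bookkeeping with empty set differences. I would take care to phrase the laminarity argument so that it covers both the case $M = L$ (no intersection, as a set does not intersect itself) and the case $M \neq L$ (no intersection, by laminarity of $\mathcal{L}$).
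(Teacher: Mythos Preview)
Your proposal is correct and follows essentially the same approach as the paper: apply Lemma~\ref{lem:triple_intersection} with $A=S$, $B=L$, use laminarity to rule out the alternative that the member of~$\mathcal{L}$ intersects~$L$, and observe that $L$ itself witnesses strictness because it intersects~$S$ but none of the four uncrossed sets. Your explicit handling of the case $M=L$ is a small extra care the paper leaves implicit, but otherwise the arguments coincide.
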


\begin{proof}
If $L' \in \mathcal{L}$ is a set which intersects one of the sets
\[
    S \cap L, \quad S \cup L, \quad S \setminus L, \quad L \setminus S\,,
\]
then, by Lemma~\ref{lem:triple_intersection}, $L'$ intersects $S$ or $L$, where the latter is impossible as both $L$ and $L'$ belong to the laminar family  $\mathcal{L}$ . This argument establishes the above  inclusions, which obviously are proper  
as  $L \in I(S,\mathcal{L})$ does not intersect any of the four sets above.
\end{proof}

\section{Laminar root bases}
\label{sec:root bases}

The aim of this section is to establish the  following result.

\begin{theorem}
\label{thm:laminarRootBasis}
   If $c$ provides facet weights for the facet inducing Steiner graph $(G,T)$ then there is a \emph{laminar root basis} for $c$, i.e., a root basis $\delta(S)$ ($S \in \mathcal{L}$) for $c$ with a laminar family $\mathcal{L}$ of  subsets of $V(G)$.
\end{theorem}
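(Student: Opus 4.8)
The plan is to run an uncrossing argument in the spirit of~\cite{LauRaviSingh2011}, using the strict positivity of the facet weights to keep the uncrossing identities free of correction terms. First I would record two preliminary facts. Since $(G,T)$ is facet inducing with weights $c$, we have $G=G_c$, so $c(e)>0$ for every $e\in E(G)$; and by Remark~\ref{rem:Gc} the incidence vectors of the roots of $c$ span $\R^{E(G)}$. Set $\gamma:=\gamma_c(G,T)$, so every $T$-Steiner cut has $c$-weight at least $\gamma$, with equality exactly for the roots. Call a family $\mathcal{L}$ of subsets of $V(G)$ \emph{admissible} if it is laminar, every $S\in\mathcal{L}$ is a root, and the vectors $\incvec{\delta(S)}$ ($S\in\mathcal{L}$) are linearly independent. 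I would fix an admissible family $\mathcal{L}$ of maximum cardinality and put $W:=\mathrm{span}\{\incvec{\delta(S)}:S\in\mathcal{L}\}$; it then suffices to prove $W=\R^{E(G)}$, for then $|\mathcal{L}|=|E(G)|$ and $\{\delta(S):S\in\mathcal{L}\}$ is the sought laminar root basis.

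Assume for contradiction that $W\subsetneq\R^{E(G)}$. Since the roots span $\R^{E(G)}$, there is a root $S$ with $\incvec{\delta(S)}\notin W$; among all such roots I would choose one minimizing $|I(S,\mathcal{L})|$. If $I(S,\mathcal{L})=\varnothing$, then $S$ intersects no member of $\mathcal{L}$, so $\mathcal{L}\cup\{S\}$ is admissible and strictly larger, a contradiction; hence I may fix some $L\in\mathcal{L}$ that intersects $S$.

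Here comes the step that distinguishes the Steiner setting from the ordinary cut dominant, where every nonempty proper subset already yields a valid cut. Writing $t_1,t_2,t_3,t_4$ for the numbers of terminals in $S\cap L$, $S\setminus L$, $L\setminus S$ and $\overline{S\cup L}$, the fact that $\delta(S)$ and $\delta(L)$ are $T$-Steiner cuts becomes $t_1+t_2\ge 1$, $t_3+t_4\ge 1$, $t_1+t_3\ge 1$ and $t_2+t_4\ge 1$. A short case distinction shows these four conditions force either ($t_1\ge 1$ and $t_4\ge 1$) or ($t_2\ge 1$ and $t_3\ge 1$); that is, \emph{either} $S\cap L$ and $S\cup L$ are both $T$-Steiner cuts, \emph{or} $S\setminus L$ and $L\setminus S$ are both $T$-Steiner cuts. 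Guaranteeing that at least one of the two uncrossings stays inside the class of Steiner cuts is the point I expect to be the main (if not deep) obstacle, and it is exactly what the laminarity statement must contend with beyond the known cut-dominant case.

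In the first case I would invoke the submodularity identity
\[
    \incvec{\delta(S)}+\incvec{\delta(L)}
    =\incvec{\delta(S\cap L)}+\incvec{\delta(S\cup L)}
      +2\,\incvec{E(S\setminus L,\,L\setminus S)}\,,
\]
take the inner product with $c$, and combine $c(\delta(S))=c(\delta(L))=\gamma$ with $c(\delta(S\cap L)),c(\delta(S\cup L))\ge\gamma$ to deduce that $S\cap L$ and $S\cup L$ are both roots and that $c\big(E(S\setminus L,L\setminus S)\big)=0$. Because $c$ is strictly positive on $E(G)$, the correction term vanishes identically, leaving the clean identity $\incvec{\delta(S)}=\incvec{\delta(S\cap L)}+\incvec{\delta(S\cup L)}-\incvec{\delta(L)}$. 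As $\incvec{\delta(L)}\in W$ while $\incvec{\delta(S)}\notin W$, at least one of $\incvec{\delta(S\cap L)}$, $\incvec{\delta(S\cup L)}$ lies outside $W$; by Lemma~\ref{lem:laminar} that set intersects strictly fewer members of $\mathcal{L}$ than $S$ does, contradicting the minimality of $|I(S,\mathcal{L})|$. The second case is identical, using the posimodular identity with correction term $2\,\incvec{E(S\cap L,\,\overline{S\cup L})}$. This contradiction yields $W=\R^{E(G)}$ and completes the proof.
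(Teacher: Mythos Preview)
Your proof is correct and follows essentially the same uncrossing argument as the paper: the paper isolates your inline uncrossing step (the terminal count showing that at least one of the two uncrossings stays among $T$-Steiner cuts, followed by the vanishing of the correction term via $c>\zerovec$) into a separate Lemma~\ref{lem:uncrossing}, but otherwise the maximal-laminar-family plus minimal-$|I(S,\mathcal{L})|$ strategy and the appeal to Lemma~\ref{lem:laminar} are identical.
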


Before we start to prove Theorem~\ref{thm:laminarRootBasis}, we  have a closer look at laminar root bases. 

\ifINDICATECHANGES
    {\color{\changescolor}%
\fi
\begin{proposition}\label{prop:laminarRootBasis}
 \added[comment={\revrem{2}{12}}]{}
Let  $\delta(S)$ ($S \in \mathcal{L}$) be a laminar root basis for facet weights $c$ of a facet inducing Steiner graph $(G,T)$ with a laminar family $\mathcal{L}$ of subsets of $V(G)$. Then 
$\mathcal{L}_{\min}$ consists of singleton elements that are in $T$. 
\end{proposition}
\ifINDICATECHANGES
    }%
\fi

\begin{proof}
Each set $S \in \mathcal{L}_{\min}$ must contain some  node from $T$, because $\delta(S)$ is a $T$-Steiner cut. If $S$ contains more than one node, then due to Part~(4) of Proposition~\ref{prop:facets_first_properties} there must be an edge with both end nodes in $S$. However, due to the laminarity of $\mathcal{L}$ such an edge is not contained in any of the cuts $\delta(S)$ ($S \in \mathcal{L}$), contradicting the fact that those cuts form a basis of $\R^{E(G)}$.
\end{proof}

In the following, in order to simplify  reading, we denote by  $\delta(S)$ also the  incidence vector of the subset $\delta(S)$ of the edge set of  a graph. 
	
\begin{lemma}\label{lem:uncrossing}
If $\delta(S_1)$ and $\delta(S_2)$ are roots of the facet weights $c$ for the facet inducing Steiner graph $(G,T)$ then  at least one of the following holds:

	\begin{center}
	    $\delta(S_1\cap S_2)$ and $\delta(S_1\cup S_2)$ are both roots and  
	\end{center}
	\begin{equation}
	    \label{eq:uncrossing_claim_capcup}
		\delta(S_1) + \delta(S_2)
		=
		\delta(S_1\cap S_2) + \delta(S_1\cup S_2)
	\end{equation}
	or
	\begin{center}
	    $\delta(S_1\setminus S_2)$ and $\delta(S_2\setminus S_1)$ are both roots and 
	\end{center}
	\begin{equation}
	    \label{eq:uncrossing_claim_symdiff}
		\delta(S_1) + \delta(S_2)
		=
		\delta(S_1\setminus S_2) + \delta(S_2\setminus S_1)\,.
	\end{equation}
\end{lemma}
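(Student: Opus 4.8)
The plan is to combine a purely graph-theoretic ``modular'' identity for the four derived cuts with a short combinatorial argument on the distribution of the terminals, and then to use the root property together with $G=G_c$ to upgrade the identity to an exact equation. First I would record the standard cut identities, valid for \emph{arbitrary} subsets $S_1,S_2\subseteq V(G)$: writing $E(X,Y)$ for the set of edges with one endnode in $X$ and the other in $Y$, a direct check (classifying each edge according to which of the four parts $S_1\cap S_2$, $S_1\setminus S_2$, $S_2\setminus S_1$, $V(G)\setminus(S_1\cup S_2)$ contains each of its endnodes) yields
\begin{align*}
 \delta(S_1)+\delta(S_2) &= \delta(S_1\cap S_2)+\delta(S_1\cup S_2)+2\chi\bigl(E(S_1\setminus S_2,\,S_2\setminus S_1)\bigr),\\
 \delta(S_1)+\delta(S_2) &= \delta(S_1\setminus S_2)+\delta(S_2\setminus S_1)+2\chi\bigl(E(S_1\cap S_2,\,V(G)\setminus(S_1\cup S_2))\bigr),
\end{align*}
where the two correction terms are nonnegative incidence vectors. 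The desired equations \eqref{eq:uncrossing_claim_capcup} and \eqref{eq:uncrossing_claim_symdiff} are precisely these two identities with the respective correction term absent.

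The combinatorial core is to show that at least one of the two derived \emph{pairs} consists of genuine $T$-Steiner cuts. Partitioning the terminals into the four quadrants $T_{11}=T\cap S_1\cap S_2$, $T_{10}=T\cap(S_1\setminus S_2)$, $T_{01}=T\cap(S_2\setminus S_1)$, and $T_{00}=T\setminus(S_1\cup S_2)$, one checks that the pair $\{S_1\cap S_2,\,S_1\cup S_2\}$ consists of two $T$-Steiner cuts exactly when $T_{11}\ne\varnothing$ and $T_{00}\ne\varnothing$, while the pair $\{S_1\setminus S_2,\,S_2\setminus S_1\}$ does exactly when $T_{10}\ne\varnothing$ and $T_{01}\ne\varnothing$ (in each case a terminal on the required opposite side is then automatically supplied by the other nonempty quadrant). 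If both conditions failed, a two-by-two case distinction, with one of $T_{11},T_{00}$ empty and one of $T_{10},T_{01}$ empty, would in each of the four combinations force $T\cap S_i=\varnothing$ or $T\setminus S_i=\varnothing$ for some $i\in\{1,2\}$, contradicting that $S_1$ and $S_2$ are themselves $T$-Steiner cuts.

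It remains to finish with the weight argument. Suppose, say, that the cap--cup pair are $T$-Steiner cuts. Applying $c$ to the first identity and using that $\gamma=\gamma_c(G,T)$ is the minimum $c$-weight of any $T$-Steiner cut gives $2\gamma = c(\delta(S_1\cap S_2))+c(\delta(S_1\cup S_2))+2\,c\bigl(E(S_1\setminus S_2,S_2\setminus S_1)\bigr)\ge 2\gamma$, so equality holds throughout. Hence both derived cuts have $c$-weight $\gamma$ and are therefore roots, and $c\bigl(E(S_1\setminus S_2,S_2\setminus S_1)\bigr)=0$. Since $(G,T)$ is facet inducing we have $G=G_c$, i.e.\ every edge carries strictly positive $c$-weight, so that edge set is empty and the first identity collapses to \eqref{eq:uncrossing_claim_capcup}. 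The other case is entirely symmetric, using the second identity and \eqref{eq:uncrossing_claim_symdiff}.

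The main obstacle is the second step: unlike the ordinary cut dominant, where every nonempty proper vertex subset already induces an admissible cut, here the derived sets must separate the terminals, so the argument must actively exploit the terminal distribution rather than mere submodularity. This is exactly the delicacy that distinguishes Steiner cut dominants. In writing up I would also verify that the degenerate configurations ($S_1\subseteq S_2$ or $S_1\cap S_2=\varnothing$) are subsumed by the same quadrant argument, and note that being a $T$-Steiner cut already guarantees a derived set is a proper nonempty subset of $V(G)$, so no separate non-degeneracy check is needed.
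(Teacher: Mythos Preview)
Your proposal is correct and follows essentially the same approach as the paper's proof: both use the two standard cut identities with nonnegative correction terms, establish that at least one of the two derived pairs consists of $T$-Steiner cuts via the distribution of terminals among the four quadrants, and then apply $c$ together with $c>\mathbb{O}$ to force the correction term to vanish. Your four-case contradiction argument for the terminal distribution is spelled out more explicitly than in the paper, which simply observes that ``$T$ is not contained in any row or column'' of the $2\times 2$ diagram and asserts the conclusion.
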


\begin{proof}
\replaced{We define: }{We define} 
	\begin{equation*}
		S_{12}:= S_1 \cap S_2, \quad
		S_{\bar{1}\bar{2}} := V \setminus (S_1 \cup S_2), \quad
		S_{1\bar{2}}:= S_1 \setminus S_2, \quad
		S_{\bar{1}2} := S_2 \setminus S_1
	\end{equation*}
\deleted{(see Fig.~???).}
As both $\delta(S_1)$ and $\delta(S_2)$ are $T$-Steiner cuts, 
\deleted[comment={\revrem{1}{2}, \revrem{2}{14}}]{the set  $T$ is not contained in any row or column of Fig~???. Hence} 
at least one of  the following holds:
	\begin{equation}
	    \label{eq:uncrossing_capcup}
	    \text{$\delta(S_{12})$, $\delta(S_{\bar{1}\bar{2}})$ are both $T$-Steiner cuts }
	\end{equation}
	or
	\begin{equation}
	    \label{eq:uncrossing_symdiff}
	    \text{$\delta(S_{1\bar{2}})$, $\delta(S_{\bar{1}2})$ are both $T$-Steiner cuts}\,.
	\end{equation}
\added{%
Indeed, if $\delta(S_{12})$ is not a $T$-Steiner cut, then we have  $T\subseteq S_{12}$ or $T \cap S_{12} = \varnothing$, each of which implies $T \cap S_{\bar{1}2} \ne \varnothing$ and $T \cap S_{1\bar{2}} \ne \varnothing$, hence~\eqref{eq:uncrossing_symdiff}. The argument for the case that $\delta(S_{\bar{1}\bar{2}})$ is not a $T$-Steiner cut is similar. 
}
 
    If~\eqref{eq:uncrossing_capcup} holds, 
	 for the incidence vectors we use the relation
	\begin{equation}
	\label{eq:uncrossing_capcup_chi}
		\delta(S_1) + \delta(S_2)
		=
		\delta(S_{12}) + \delta(S_{\bar{1}\bar{2}})
		+2\cdot \delta(S_{1\bar{2}},S_{2\bar{1}})
	\end{equation}
	where $\delta(S_{1\bar{2}},S_{2\bar{1}})$ is the set of edges with one endnode in $S_{1\bar{2}}$ and the other one in $S_{2\bar{1}}$.
	Since both $\delta(S_1)$ and $\delta(S_2)$ are roots for the weights $c>\zerovec{}$ and~\eqref{eq:uncrossing_capcup} holds, this implies $\delta(S_{1\bar{2}},S_{2\bar{1}})=\emptyset$ and thus \eqref{eq:uncrossing_claim_capcup}.
	
	If \eqref{eq:uncrossing_symdiff} holds, a similar argument implies  \eqref{eq:uncrossing_claim_symdiff}.


\end{proof}

\paragraph{\bf Proof of Theorem~\ref{thm:laminarRootBasis}.}

Let  $(G,T)$ be a facet inducing Steiner graph with facet weights $c$. Let $\delta(S)$ ($S \in \mathcal{L}$) be a laminar family of linearly independent roots for $c$ that has a maximal number of members.

We claim that  $\mathcal{L}$ is a root basis.
   In order to establish this statement, assume that it does not hold. Then there are roots that are not contained in the subspace spanned by the members of  $\mathcal{L}$. Among those roots, we choose $\delta(S)$ to be one that minimizes $|I(S,\mathcal{L})|$. By maximality of $\mathcal{L}$ we have $I(S,\mathcal{L}) \ne \varnothing$.
    
    Let $L \in I(S,\mathcal{L})$. By Lemma~\ref{lem:uncrossing}, one of the following two cases applies.  
    \begin{description}
        \item[Case 1: ]$\delta(S\cap L)$, $\delta(S \cup L)$ are roots and
 	    \begin{equation*}
		   \delta(S) + \delta(L)
		    =
		    \delta(S \cap L) + \delta(S \cup L)\,.
	    \end{equation*}
	    As $\delta(S)$ is not contained in the subspace spanned by $\mathcal{L}$, this equation shows that at least one of $\delta(S\cap L)$ and $\delta(S\cup L)$ is not contained in that subspace. Because of Lemma~\ref{lem:laminar}, this, however, contradicts the choice of $S$ (minimizing  $|I(S,\mathcal{L})|$). 
        \item[Case 2: ]$\delta(S\setminus L)$,  $\delta(L\setminus S)$ are roots and
 	    \begin{equation*}
		    \delta(S) + \delta(L)
		    =
		   \delta(S\setminus L) + \delta(L\setminus S)\,,
	    \end{equation*}
	    which implies a contradiction that is similar to the one we encountered in Case~1.
\end{description}
This completes the proof of Theorem~\ref{thm:laminarRootBasis}.
\added[comment={\revrem{2}{15}}]{\hfill\qed}

\replaced[comment={\revrem{1}{3}}]{%
For a laminar family $\mathcal{L}$ of subsets of $V(G)$ with a Steiner graph $(G,T)$ the following holds: all cuts induced by $\mathcal{L}$ are $T$-Steiner cuts if and only if each set in $\mathcal{L}_{\min}$ contains a node from $T$ and in case of $\mathcal{L}$ containing only a single maximal set w.r.t. inclusion that maximal set does not contain the entire terminal set $T$. Thus, defining the \emph{width} of a laminar family $\mathcal{L}$ to be $|\mathcal{L}_{\min}|$ if $\mathcal{L}$ contains more than one maximal set w.r.t. inclusion and to be $|\mathcal{L}_{\min}|+1$ otherwise, Theorem~\ref{thm:laminarRootBasis} and Proposition~\ref{prop:laminarRootBasis} imply that the Steiner \replaced[comment={\revrem{2}{16}}]{degree }{rank} of a facet defining inequality for a cut dominant is the smallest width of any laminar root basis for that inequality. 
}{%
For a laminar family $\mathcal{L}$  we define the \emph{width} of $\mathcal{L}$ to be $|\mathcal{L}_{\min}|$ unless $\mathcal{L}$ has only one maximal member w.r.t. inclusion, in which case the width is $|\mathcal{L}_{\min}|+1$. 
Theorem~\ref{thm:laminarRootBasis} then in particular implies that the Steiner rank of a facet defining inequality for a cut dominant is the smallest width of any laminar root basis for that inequality.%
}

\section{Subdivision and gluing}
\label{sec:split_glue}

In this section we  investigate two   operations that construct facet inducing Steiner graphs from smaller ones. We also characterize  operations that are inverse to those operations.

 We say that the Steiner graph $(G,T)$ has been obtained by a \emph{subdivision} from the Steiner graph $(G',T')$ if $G$ arises from $G'$ by  replacing an edge $uv$ with the edges $uw$ and $vw$, where $w \not\in V(T')$ is a newly added node, and $T=T'$ holds (i.e., the new node $w \not\in T$ is a non-terminal node). If $c'$ is a vector of edge weights of $G'$ then 
 \added[comment={\revrem{2}{17}}]{we define }the accordingly subdivided vector  $c$ of edge weights of $G$ \replaced{via }{has} $c(uw)=c(vw)=c'(uv)$ and $c(e)=c'(e)$ for all other edges. 
 \deleted[comment={\revrem{2}{19}}]{The following fact is straight forward.}

 \ifINDICATECHANGES
    {\color{\changescolor}%
\fi
\begin{proposition}	\label{prop:subdivisions_preserve_facets}  
    If the Steiner graph $(G,T)$ is \replaced[comment={\revrem{2}{18}}]{obtained }{obained} by subdivision from a facet inducing Steiner graph $(G',T')$ with facet weights $c'$,  then also $(G,T)$ is a facet inducing Steiner graph with facet weights $c$ obtained by subdividing $c'$ accordingly. 
\end{proposition}
 \ifINDICATECHANGES
    }%
\fi

\begin{proof}
    We continue to use the notations introduced when defining the subdivision operation. Due to $w \not\in T$, for every $T$-Steiner cut $\delta_{G}(S)$ in $(G,T)$ we have that $\delta_G(S')$ with $S' := S\cap V(G')$ is a $T'$-Steiner cut in $(G',T')$ with 
    \begin{equation*}
        c(\delta_{G}(S)) 
        = 
        \begin{cases}
            c(\delta_{G'}(S')) + 2c'(uv) & \text{if} \{uw,vw\} \subseteq \delta(S) \\
            c(\delta_{G'}(S')) & \text{otherwise}
        \end{cases}.
    \end{equation*}
    In particular, we conclude $\gamma_{c}(G,T)\ge\gamma_{c'}(G',T')$.  
    Conversely, each $T'$-Steiner cut $\delta_{G'}(S')$ in $(G',T')$ with $uv \not\in \delta_{G'}(S')$  gives rise to one $T$-Steiner cut $\delta_G(S) = \delta_{G'}(S')$  with $c(\delta(S)) = c'(\delta_{G'}(S'))$, and each $T'$-Steiner cut $\delta_{G'}(S')$ in $(G',T')$ with $uv \in \delta_{G'}(S')$ induces two $T$-Steiner cuts $\delta_G(S_u) = \delta_{G'}(S') \cup\{uw\}$ and $\delta_G(S_v) = \delta_{G'}(S') \cup \{vw\}$ with $c(\delta_G(S_u)) = c(\delta_G(S_v)) = c'(\delta_{G'}(S'))$. In particular, we have $\gamma_{c}(G,T)\le\gamma_{c'}(G',T')$, hence $\gamma_{c}(G,T)=\gamma_{c'}(G',T')$. Moreover, if $\mathcal{S}'$ is a root base for $c'$, then with choosing $S^{\star} \in \mathcal{S}'$ arbitrarily one finds that
    \begin{equation*}
        \{\delta_G(S_u)\,:\, S'\in\mathcal{S}'\}
        \cup
        \{\delta_G(S^{\star}_v)\}
    \end{equation*}
    forms a root basis for $c$.
    \end{proof}
 


In order to investigate the operation that is reverse to subdivision, let 
$w\in V(G)$ be a node of degree two in the graph $G$ whose two neighbors  $u$ and $v$ are not adjacent to each other. Then \emph{reducing} $w$ means to remove $w$ and to replace its two incident edges  $uw$ and $vw$  by the edge $uv$.

Analyzing the effect of reductions on facet inducing Steiner graphs is a bit more involved than the reasoning behind Proposition~\ref{prop:subdivisions_preserve_facets} is. 
The following simple observation will be  helpful for that purpose. 

 \ifINDICATECHANGES
    {\color{\changescolor}%
\fi
\begin{lemma}\label{lem:delta_nonterm}
\added[comment={\revrem{2}{19}}]{}
If $\delta(S)$ is a root of the weight vector $c$ for a Steiner graph $(G,T)$ and $w\in V(G) \setminus T$ is a non-terminal node, then $c(\delta(w)\cap\delta(S)) \le c(\delta(w)\setminus\delta(S))$ holds. 
\end{lemma}
 \ifINDICATECHANGES
    }%
\fi

\begin{proof}
We may assume $w \in S$. The claim then follows because $\delta(S')$ with $S' = S \setminus \{w\}$ is a $T$-Steiner cut as well with 
\begin{equation*}
    c(\delta(S)) = \gamma_c(G,T) \le c(\delta(S')) = c(\delta(S)) + c(\delta(w)\setminus\delta(S)) - c(\delta(w)\cap\delta(S)).
\end{equation*}
\end{proof}%

\begin{theorem}
	\label{thm:degree_two_nonterminals_come_from_subdividing}
	Let $(G,T)$ be a facet inducing Steiner graph with facet weights $c$.  Let $w\in V(G)\setminus T$ be a non-terminal node with  exactly two neighbors, say $u$ and $v$. Then the following hold:
	\begin{enumerate}
	    \item We have $c(uw)=c(vw)$ and $u$ and $v$ are not adjacent.
	    \item If $G'$ is obtained from $G$ by  reducing  node $w$, then $(G',T')$ with $T'=T$ is a facet inducing Steiner graph with facet weights $c'$ obtained from $c$ by setting $c'(uv)=c(uw)=c(vw)$ and $c'(e)=c(e)$ for all other edges. 
	\end{enumerate}
	Note that $(G,T)$ can be obtained from $(G',T')$ by  subdivision of the edge $uv$, and $c$ is the weight vector obtained by subdividing $c'$ accordingly.
\end{theorem}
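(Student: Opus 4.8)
The plan is to prove Part~(1) from local optimality of roots and then to deduce Part~(2) by transferring a root basis along an explicit linear map induced by the reduction. For the equality $c(uw)=c(vw)$, I would first record that no root of $c$ can cut both edges at $w$: since $\delta(w)=\{uw,vw\}$, a root $\delta(S)$ with $uw,vw\in\delta(S)$ would have $\delta(w)\cap\delta(S)=\{uw,vw\}$ and $\delta(w)\setminus\delta(S)=\varnothing$, so Remark~\ref{rem:delta_nonterm} would force $c(uw)+c(vw)\le 0$, contradicting $c>\zerovec{}$. By Part~(5) of Remark~\ref{rem:facets_first_properties} the edge $uw$ lies in some root $\delta(S_1)$, and by the previous observation $vw\notin\delta(S_1)$; hence Remark~\ref{rem:delta_nonterm} applied to $\delta(S_1)$ gives $c(uw)\le c(vw)$. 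The symmetric argument with a root containing $vw$ gives $c(vw)\le c(uw)$, so $c(uw)=c(vw)$.

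For the non-adjacency claim I argue by contradiction, assuming $uv\in E(G)$. I would enumerate, up to complementation, the partitions of $\{u,v,w\}$ induced by a root $\delta(S)$: the pattern separating $w$ from both $u$ and $v$ is excluded by the observation above, and each of the three remaining patterns (all three on one side; $\{u,w\}$ versus $\{v\}$; $\{v,w\}$ versus $\{u\}$) satisfies $x_{uw}+x_{vw}=x_{uv}$ on the incidence vector. Thus every root of $c$ satisfies the nontrivial equation $x_{uw}+x_{vw}-x_{uv}=0$, so the roots cannot span $\R^{E(G)}$, contradicting the existence of a root basis. Hence $u$ and $v$ are not adjacent, which also guarantees that reducing $w$ yields a genuine (simple) graph $G'$.

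For Part~(2), write $\lambda:=c(uw)=c(vw)$ and note that $c'$ is nonnegative, positive on $E(G')=(E(G)\setminus\{uw,vw\})\cup\{uv\}$, and in minimum integer form, since its multiset of nonzero values differs from that of $c$ only by deleting one copy of $\lambda$. The key device is the linear map $\pi\colon\R^{E(G)}\to\R^{E(G')}$ that is the identity on the common edges and sets $\pi(x)_{uv}=x_{uw}+x_{vw}$. Using the three admissible root patterns from Part~(1), I would check that for every root $\delta_G(S)$ of $c$ the vector $\pi(\delta_G(S))$ is exactly the incidence vector of the $T$-Steiner cut $\delta_{G'}(S\setminus\{w\})$, with $c'(\delta_{G'}(S\setminus\{w\}))=c(\delta_G(S))=\gamma_c(G,T)$. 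Conversely, every $T$-Steiner cut of $G'$ lifts to a $T$-Steiner cut of $G$ of equal $c$-weight by placing $w$ on a suitable side (either side when $uv$ is cut, since $c(uw)=c(vw)=\lambda$); this gives $\gamma_{c'}(G',T')\ge\gamma_c(G,T)$, and combined with the previous equality yields $\gamma_{c'}(G',T')=\gamma_c(G,T)>0$, so the images $\pi(\delta_G(S))$ are genuine roots of $c'$. Since $\pi$ is surjective, applying it to a root basis of $c$ (which exists by hypothesis, cf.\ Theorem~\ref{thm:laminarRootBasis}) produces a spanning set of roots of $c'$, from which one extracts $|E(G')|$ linearly independent roots; Part~(2) of Remark~\ref{rem:Gc} then certifies that $(G',T')$ is facet inducing with facet weights $c'$. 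The closing remark is immediate, as by construction $G$ arises from $G'$ by subdividing $uv$ with $w$ and $c$ is the corresponding subdivision of $c'$.

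The main obstacle I anticipate is the bookkeeping in Part~(2): one must verify the weight correspondence in both directions to pin down $\gamma_{c'}(G',T')$ exactly, and must exploit the surjectivity of $\pi$ (whose kernel is spanned by $e_{uw}-e_{vw}$) to convert ``roots spanning $\R^{E(G)}$'' into enough independent roots of $c'$. The non-adjacency argument in Part~(1), though short, is the conceptual heart, since it is precisely what frees the coordinate $uv$ for $\pi$ and rules out degeneracies in the reduction.
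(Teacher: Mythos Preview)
Your proposal is correct and follows essentially the same approach as the paper: both proofs use Remark~\ref{rem:delta_nonterm} to establish $c(uw)=c(vw)$ and rule out roots containing both edges, derive non-adjacency from the hyperplane equation $x_{uw}+x_{vw}=x_{uv}$, and transfer a root basis via the surjective linear map $\pi(x)_{uv}=x_{uw}+x_{vw}$. Minor differences are cosmetic (the paper normalizes the root basis so that $w\in S$, you instead check all admissible patterns; your reference to Theorem~\ref{thm:laminarRootBasis} is unnecessary since any root basis suffices; you are slightly more explicit about $c'$ being in minimum integer form).
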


\begin{proof}   
Due to $w\not\in T$ (and $c > \zerovec{}$) no root of $c$ contains both $uw$ and $vw$
\added[comment={\revrem{1}{4}}]{%
(because in that case removing those two edges from the root would result in a $T$-Steiner cut with smaller $c$-value)%
}%
. A first consequence is that (since each of those two edges is contained in at least one root of~$c$) Lemma~\ref{lem:delta_nonterm}  implies 
$c(uw)=c(vw)$. As a second consequence, $u$ and $v$ are not adjacent to each other, because otherwise (as every cut intersects a triangle in none or two edges) all roots of $c$ 
\replaced[comment={\revrem{1}{5}, \revrem{2}{20}}]{are }{were} 
contained in the hyperplane defined by $x_{uw}+x_{vw}=x_{uv}$, a contradiction to the existence of a root basis for $c$.

For the proof of the second statement we first observe that if $\delta_{G'}(S')$ with  $S'\cap\{u,v\}\ne\varnothing$ is a $T$-Steiner cut  in $G'$, then $\delta_G(S'\cup\{w\})$ is a $T$-Steiner cut in $G$ with $c'(\delta_{G'}(S'))=c(\delta_G(S'\cup\{w\}))$. This implies $\gamma_{c'}(G',T')\ge\gamma_c(G,T)$. 
In fact, we have $\gamma_{c'}(G',T')=\gamma_c(G,T)$, since for each root $\delta_G(S)$ of $c$ with $w\in S$ we have $S\cap\{u,v\}\ne\varnothing$, thus $c'(\delta_{G'}(S'))=c(\delta_G(S))$ holds for the $T$-Steiner cut $\delta_{G'}(S')$ in $G'$ with $S' = S \setminus\{w\}$, which hence is a root of $c'$. 

In order to show that there is a root basis for $c'$ let $\delta_G(S)$ ($S \in \mathcal{S}$) be a root basis for $c$ with $w\in S$ for all $S \in \mathcal{S}$. As we just argued above, for each $S \in \mathcal{S}$ the $T$-Steiner cut $\delta_{G'}(S')$  with $S'=S\setminus\{w\}$ is a root for $c'$. Since those roots are the images of the root basis $\delta_G(S)$ ($S \in \mathcal{S}$) under the surjective linear map $\R^{E(G)}\rightarrow\R^{E(G')}$ that maps $x$ to $x'$ with $x'_{uv}=x_{uw}+x_{vw}$ and $x'_e = x_e$ for all other edges, they  contain a root basis for $c'$.
\end{proof}

The second operation we consider is to \emph{glue} two Steiner graphs $(G_1,T_1)$ and $(G_2,T_2)$ with $V(G_1)\cap V(G_2) = \{w\}$ and $w \in T_1\cap T_2$
\deleted{(possibly after replacing the graphs by isomorphic copies)} at a common terminal  node $w$ in order to obtain a Steiner graph $(G,T)$ with $V(G)=V(G_1)\cup V(G_2)$ and $E(G)=E(G_1)\cup E(G_2)$ as well as $T=T_1\cup T_2$ or $T=(T_1\cup T_2) \setminus \{w\}$, i.e. $w$ may or may not belong to $T$.
For two  vectors $c_1$ and $c_2$ of edge weights of $G_1$ and $G_2$ in minimum integer form, respectively, the vector obtained by gluing $c_1$ and $c_2$ is the  vector $c$ of edge weights of $G$ that is the minimum integer form of 
\[
    (\gamma_{c_2}(G_2,T_2)\cdot c_1,\gamma_{c_1}(G_1,T_1)\cdot c_2)\,.
\]
The set of roots of $c$ is the union of the set of roots of $c_1$ and the set of roots of $c_2$, and  
$\gamma_c(G,T)$ 
\replaced[comment={\revrem{1}{6}}]{divides }{devides} 
the least common multiple of $\gamma_{c_1}(G_1,T_1)$ and $\gamma_{c_2}(G_2,T_2)$. Consequently, the following holds.

 \ifINDICATECHANGES
    {\color{\changescolor}%
\fi
\begin{proposition}\label{prop:identifying_x}
 If the Steiner graph $(G,T)$ is obtained by gluing  two facet inducing Steiner graphs $(G_1,T_1)$ and $(G_2,T_2)$ with facet weigts $c_1$ and $c_2$, respectively,  then $(G,T)$ is a facet inducing Steiner graph as well and the  weights  obtained from gluing $c_1$ and $c_2$ accordingly are facet weights for $(G,T)$. 
 \end{proposition}
 \ifINDICATECHANGES
    }%
\fi

For the analysis of the reverse operation to gluing recall that a 
\emph{cut node} in a connected graph $G$ is a node $w$ for which $G\setminus w$ is not connected.

\begin{theorem} 
\label{thm:cut_nodes}
Let $(G,T)$ be a facet inducing Steiner graph with facet weights $c$, and let  $w\in V(G)$ be a cut node of $G$. Let $V_1\subseteq V(G)$ be the node set of one of the  components of $G\setminus w$ and $V_2 := V(G\setminus w)\setminus V_1$. We denote by $G_1$ and $G_2$  the subgraphs of $G$ induced by $V_1\cup\{w\}$ and $V_2\cup\{w\}$, respectively, and define $T_1= (T\cap V_1) \cup\{w\}$ as well as $T_2= (T\cap V_2) \cup\{w\}$. 

Then we have $|T_1|,|T_2| \ge 2$, and both $(G_1,T_1)$ and $(G_2,T_2)$ are facet inducing Steiner graphs with facet weights $c_1$ and $c_2$, respectively, that are the minimum integer forms of the restrictions of $c$ to $E(G_1)$ and to $E(G_2)$, respectively.

Note that $(G,T)$ can be obtained by gluing $(G_1,T_1)$ and $(G_2,T_2)$ at their common terminal node $w$, and $c$ is the weight vector obtained by gluing  $c_1$ and $c_2$ accordingly. 
\end{theorem}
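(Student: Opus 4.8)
The plan is to reduce everything to the characterization of facet weights recorded after Remark~\ref{rem:Gc}: it suffices to show that the restriction $\hat c_i := c|_{E(G_i)}$ admits a root basis in $\R^{E(G_i)}$, and then to pass to its minimum integer form $c_i$. The geometric fact that makes this work is that the cut node $w$ decouples the two sides completely. I begin by recording two structural observations. Any edge of $G$ joining $V_1$ to $V_2$ would survive in $G\setminus w$ and connect the two parts, contradicting that $V_1$ is a union of components; hence no such edge exists, $E(G)=E(G_1)\sqcup E(G_2)$ is a partition, and correspondingly $\R^{E(G)}=\R^{E(G_1)}\oplus\R^{E(G_2)}$ is a direct sum of coordinate subspaces. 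Moreover Part~(6) of Remark~\ref{rem:facets_first_properties} gives $T\cap V_1\neq\varnothing$ and $T\cap V_2\neq\varnothing$ (each component of $G\setminus w$, and hence $V_2$ as a union of components, carries a terminal), so $|T_1|,|T_2|\ge 2$, since $w\notin V_1\cup V_2$ means $w$ is not already counted in $T\cap V_i$.

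The main step is to understand the roots of $c$. Given any root $\delta_G(S)$ I may replace $S$ by its complement (which leaves the cut, hence the vector $\delta_G(S)$, unchanged) to assume $w\notin S$, so that $S\subseteq V_1\cup V_2$. By Part~(4) of Remark~\ref{rem:facets_first_properties} the set $S$ induces a connected subgraph of $G$; as no edge joins $V_1$ and $V_2$, connectivity forces $S\subseteq V_1$ or $S\subseteq V_2$. In the first case $\delta_G(S)$ is supported on $E(G_1)$ and equals $\delta_{G_1}(S)$, which is a $T_1$-Steiner cut of $G_1$ (it has $\varnothing\neq T\cap S\subseteq T_1\cap S$ and $w\in T_1\setminus S$); the second case is symmetric. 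Conversely, any $T_1$-Steiner cut $\delta_{G_1}(S')$ with $w\notin S'\subseteq V_1$ coincides with $\delta_G(S')$, which is a $T$-Steiner cut of $G$ of the same weight (here $T\cap V_2\neq\varnothing$ guarantees $T\setminus S'\neq\varnothing$). This yields the lower bound $\gamma_{\hat c_1}(G_1,T_1)\ge\gamma:=\gamma_c(G,T)$, and symmetrically $\gamma_{\hat c_2}(G_2,T_2)\ge\gamma$.

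Finally I split a root basis. Choose a root basis $\delta_G(S)$ ($S\in\mathcal S$) for $c$ and, complementing where necessary, assume $w\notin S$ for every $S\in\mathcal S$; partition $\mathcal S=\mathcal S_1\sqcup\mathcal S_2$ according to whether $S\subseteq V_1$ or $S\subseteq V_2$. The vectors indexed by $\mathcal S_i$ are supported on $E(G_i)$, hence lie in the coordinate subspace $\R^{E(G_i)}$; since the full family is a basis of $\R^{E(G_1)}\oplus\R^{E(G_2)}$ and the two groups occupy complementary coordinate blocks, each $\{\delta_{G_i}(S):S\in\mathcal S_i\}$ must be a basis of $\R^{E(G_i)}$ (in particular both $\mathcal S_i$ are nonempty). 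Their elements are $T_i$-Steiner cuts of weight $\gamma$, so together with the lower bound above they are roots of $\hat c_i$ and $\gamma_{\hat c_i}(G_i,T_i)=\gamma$. Passing from $\hat c_i$ to its minimum integer form $c_i$ is a positive rescaling that preserves the set of roots, keeps $G_{c_i}=G_i$ (all weights stay positive) and $\gamma_{c_i}>0$; thus $c_i$ has minimum integer form and a root basis, and by Remark~\ref{rem:Gc} it provides facet weights for the facet inducing Steiner graph $(G_i,T_i)$.

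The gluing note is then a direct verification: $T_1\cup T_2=(T\cap V_1)\cup(T\cap V_2)\cup\{w\}=T\cup\{w\}$, so $T=T_1\cup T_2$ or $T=(T_1\cup T_2)\setminus\{w\}$ according to whether $w\in T$, and substituting $c_i=\hat c_i/\gcd$ with $\gamma_{c_i}=\gamma/\gcd$ into the gluing formula produces a positive multiple of $(\hat c_1,\hat c_2)=c$, whose minimum integer form is $c$. The only genuinely delicate point is the forced decomposition $S\subseteq V_1$ or $S\subseteq V_2$ of every root: this is precisely where the connectivity of roots (Part~(4) of Remark~\ref{rem:facets_first_properties}) together with the cut-node hypothesis is indispensable, and everything else is bookkeeping with coordinate subspaces and minimum integer forms.
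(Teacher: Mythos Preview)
Your proof is correct and follows essentially the same approach as the paper's own (very terse) argument: choose a root basis with $w \notin S$, use Part~(4) of Remark~\ref{rem:facets_first_properties} to force each such $S$ into $V_1$ or $V_2$, and then exploit the direct-sum decomposition $\R^{E(G)}=\R^{E(G_1)}\oplus\R^{E(G_2)}$ to conclude that each block is a root basis on its side. You are simply more explicit than the paper about the auxiliary verifications (that the resulting cuts are genuine $T_i$-Steiner cuts, that $\gamma_{\hat c_i}(G_i,T_i)=\gamma$, and that the gluing formula recovers $c$), all of which the paper leaves to the reader.
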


\begin{proof}
    The fact that we have $|T_1|,|T_2| \ge 2$  is due to Part~6 of Proposition~\ref{prop:facets_first_properties}. 
    
    Let $\delta_G(S)$ ($S \in \mathcal{S}$) be a root basis for $c$ with $w \not\in S$ for all $S\in\mathcal{S}$.  
    Part~4 of Proposition~\ref{prop:facets_first_properties} implies that  for every $S \in \mathcal{S}$ we have $S\subseteq V_1$ or $S \subseteq V_2$. From this we conclude that $\delta_{G_1}(S)$ ($S \in \mathcal{S}$, $S \subseteq V_1$) form a root basis of $c_1$, and $\delta_{G_2}(S)$ ($S \in \mathcal{S}$, $S \subseteq V_2$) form a root basis of $c_2$.
\end{proof}

\section{Irreducible facet inducing Steiner graphs}
\label{sec:main:I}

We define a  (weighted) Steiner graph $(G,T)$ to be  \emph{constructable} from a family $\mathcal{F}$ of  (weigh\-ted) Steiner graphs if there is a sequence $(G_0,T_0)$, \dots, $(G_r,T_r)=(G,T)$ with $(G_0,T_0)\in\mathcal{F}$ such that, for each $i \in [r]$, the Steiner graph  $(G_i,T_i)$ can be obtained as a subdivision of $(G_{i-1},T_{i-1})$ or from gluing $(G_{i-1},T_{i-1})$  with some member of $\mathcal{F}$. 
Due to Proposition~\ref{prop:subdivisions_preserve_facets} and Proposition~\ref{prop:identifying_x} every (weighted) Steiner graph that is constructable from a family of facet inducing (weighted) Steiner graphs is facet inducing. 

We call a facet inducing Steiner graph $(G,T)$ \emph{irreducible}  if $G$ has no cut node and no node in  $V(G)\setminus T$ has degree two. 
The results from Section~\ref{sec:split_glue} imply the following.

\begin{remark}
\label{rem:reduction_to_irreducibles}
A (weighted) Steiner graph $(G,T)$ is  facet inducing  if and only if it is constructable from irreducible facet inducing (weighted) Steiner graphs 
\replaced[comment={\revrem{1}{7}}]{(each of which then  has at most $|T|$ terminals). }{with at most $|T|$ terminals.}
\end{remark}

Therefore, it suffices to determine the (weighted) irreducible facet inducing Steiner graphs. Due to~\eqref{eq:st_cut_dom} the only irreducible facet inducing Steiner graph with two terminals  is the \emph{Steiner edge} $(K_2,V(K_2))$ and the facet inducing Steiner graphs with two terminals are the \emph{Steiner paths} $(G,\{s,t\})$, where $G$ is a path with end nodes $s$ and $t$. 

\begin{remark}
\label{rem:degrees_irreducible}
Each irreducible facet inducing Steiner graph  $(G,T)$ with $|T|\ge 3$ has $\deg(v)\ge 2$ for all $v\in T$ and $\deg(v)\ge 3$ for all $v \in V(G)\setminus T$.
\end{remark}

We are now prepared to establish a bound that in particular  implies the first main result stated in the introduction.

\begin{lemma}	
\label{lem:bound}  
Every irreducible facet inducing Steiner graph $(G,T)$  with $|T|\ge 3$ has 
\begin{equation*}
   |E(G)|\le |V(G)|+|T|-3
   \quad\text{and}\quad
   |V(G)|\le 3|T|-6\,.
 \end{equation*}
   If equality holds in any of those two inequalities, then, for every choice of  facet weights,  $\delta(t)$ is a root for every $t \in T$. 
\end{lemma}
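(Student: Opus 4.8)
The plan is to combine the laminar root basis from Theorem~\ref{thm:laminarRootBasis} with a sharpened form of the counting in Lemma~\ref{lem:boundLaminarSets}. Fix facet weights $c$ and let $\delta(S)$ ($S\in\mathcal{L}$) be a laminar root basis. Since these cuts form a basis of $\R^{E(G)}$ we have $|E(G)|=|\mathcal{L}|$, and by Remark~\ref{rem:laminarRootBasis} the family $\mathcal{L}_{\min}$ consists of distinct singletons $\{t\}$ with $t\in T$, so $\ell:=|\mathcal{L}_{\min}|\le|T|$. Two further properties will be decisive: because $\delta(S)=\delta(V(G)\setminus S)$, a basis cannot contain both a set and its complement, so $\mathcal{L}$ is \emph{complement-free}; and since $T$-Steiner cuts are proper and nonempty, neither $\varnothing$ nor $V(G)$ lies in $\mathcal{L}$.

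Next I would pass to the forest structure of $\mathcal{L}$. Let $M_1,\dots,M_k$ be the inclusion-maximal members of $\mathcal{L}$; being pairwise non-nested in a laminar family they are pairwise disjoint. Applying Lemma~\ref{lem:boundLaminarSets} separately to each sub-family $\{L\in\mathcal{L}:L\subseteq M_i\}$ (with ground set $M_i$) and summing yields $|\mathcal{L}|\le\big|\bigcup_i M_i\big|+\ell-k$. Writing $f:=|V(G)|-\big|\bigcup_i M_i\big|$ for the number of vertices lying in no member of $\mathcal{L}$, this rearranges to
\[
  |E(G)|\ \le\ |V(G)|+|T|-\bigl[(k+f)+(|T|-\ell)\bigr].
\]
Thus the first inequality reduces to the key claim $(k+f)+(|T|-\ell)\ge 3$.

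This claim is the crux, and I would prove it by distinguishing the value of $k$. If $k\ge 3$ it is immediate. If $k=2$, complement-freeness forces $M_1\cup M_2\ne V(G)$ (otherwise $M_2=V(G)\setminus M_1$), so $f\ge 1$ and $k+f\ge 3$. The delicate case is $k=1$: here the unique maximal set $M$ satisfies $M\ne V(G)$, and since $\delta(M)$ is a $T$-Steiner cut there is a terminal $t^\ast\notin M$; as every member of $\mathcal{L}$ lies in $M$, the singleton $\{t^\ast\}$ is not in $\mathcal{L}$, giving simultaneously $|T|-\ell\ge 1$ and $f\ge 1$, again a total of at least $3$. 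The main obstacle is precisely this extraction of the two ``extra'' units beyond the naive bound $|V(G)|+|T|-1$; the case $k=1$ is the one where complement-freeness alone does not suffice and one must instead use the terminal omitted by $M$. Granting the first inequality, the second follows from a handshake count: Remark~\ref{rem:degrees_irreducible} gives $2|E(G)|\ge 2|T|+3(|V(G)|-|T|)=3|V(G)|-|T|$, which together with $|E(G)|\le|V(G)|+|T|-3$ yields $|V(G)|\le 3|T|-6$.

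Finally, for the equality statements I would track the inequalities, observing that nothing above fixed $c$ in a special way, so the conclusions hold for every choice of facet weights. Equality in the first inequality forces $(k+f)+(|T|-\ell)=3$, and the case analysis pins down the structure: for $k\ge 2$ it gives $\ell=|T|$, so $\{t\}\in\mathcal{L}$ and hence $\delta(t)$ is a root for every $t\in T$; for $k=1$ it gives $f=1$ and $\ell=|T|-1$, so $V(G)\setminus M=\{t^\ast\}$ with $\delta(M)=\delta(t^\ast)$ a root while every other terminal is a singleton of $\mathcal{L}$. In either case $\delta(t)$ is a root for all $t\in T$. Equality in the second inequality forces both the handshake and the first bound to be tight, in particular $|E(G)|=|V(G)|+|T|-3$, so it reduces to the case just treated.
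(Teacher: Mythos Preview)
Your proof is correct. Both your argument and the paper's rest on the same three ingredients---the laminar root basis (Theorem~\ref{thm:laminarRootBasis}), the fact that $\mathcal{L}_{\min}$ consists of terminal singletons (Remark~\ref{rem:laminarRootBasis}), and the counting of Lemma~\ref{lem:boundLaminarSets}---and both derive the second inequality from the first via the handshake bound of Remark~\ref{rem:degrees_irreducible}.

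The only difference is in how the constant $-3$ (rather than the naive $-1$) is extracted. The paper picks a terminal $t^\star$ with $\{t^\star\}\in\mathcal{L}_{\min}$ and \emph{complements} every member of $\mathcal{L}$ containing $t^\star$; since those members form a chain, laminarity is preserved, and one obtains a laminar root basis living inside $V(G)\setminus\{t^\star\}$ whose minimal singletons lie in $T\setminus\{t^\star\}$. A single application of Lemma~\ref{lem:boundLaminarSets} on the smaller ground set then gives $|E(G)|\le (|V(G)|-1)+(|T|-1)-1$ directly. You instead keep $\mathcal{L}$ as is, split it along its maximal members $M_1,\dots,M_k$, sum the bounds, and recover the missing two units via the case analysis on $k$ (using complement-freeness for $k=2$ and the omitted terminal for $k=1$). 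The paper's route is a bit slicker; yours is more structural and makes the equality analysis transparent without having to track what the complementation does to $\mathcal{L}_{\min}$. Either way the equality case yields that every terminal singleton induces a root, exactly as you argue.
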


\begin{proof} 
\deleted[comment=\revrem{1}{8}]{Due to Remark~\ref{rem:degrees_irreducible} we have
    $2|E(G)|\ge 2|T|+3|V(G)\setminus T|=3|V(G)|-|T|$.
}
 
 Let  $c$ be facet weights of $(G,T)$ and let $\delta(S)$ ($S\in \mathcal{L}$) be a laminar root basis for $c$ as guaranteed to exist by Theorem~\ref{thm:laminarRootBasis}. Due to Proposition~\ref{prop:laminarRootBasis}  there is some terminal $t^{\star}\in T$ with $\{t^{\star}\} \in \mathcal{L}_{\min}$. 
 Replacing every set $S\in\mathcal{L}$ with $t^{\star} \in S$ by its complement $V(G)\setminus S$, we may assume that $\mathcal{L}$ is a laminar family (note that those sets $S$ form a maximal chain in $\mathcal{L}$, thus laminarity is preserved) of pairwise distinct non-empty subsets of $V(G)\setminus\{t^{\star}\}$. 
  Again by Proposition~\ref{prop:laminarRootBasis}, we have $|\mathcal{L}_{\min}|\le |T|-1$. Therefore, the bound from 
 Lemma~\ref{lem:boundLaminarSets} yields
 \begin{equation*}
     |E(G)| = |\mathcal{L}| \le |V(G)\setminus\{t^{\star}\}|+|\mathcal{L}_{\min}|-1\le |V(G)|+|T|-3\,,
 \end{equation*}
  where equality between the left-hand and the right-hand side implies $|\mathcal{L}_{\min}|=|T|-1$\added[comment={\revrem{2}{22}}]{ (see   the second inequality)}, i.e. that  $\mathcal{L}_{\min}$ contains each terminal-singleton from $T\setminus\{t^{\star}\}$. 

 \added[comment=\revrem{1}{8}]{On the other hand, due to Remark~\ref{rem:degrees_irreducible} we have}
 \added{
 \[
    2|E(G)|\ge 2|T|+3|V(G)\setminus T|=3|V(G)|-|T|\,.
\]
}
Combining the above two inequalities we obtain $|V(G)|\le 3|T|-6$ as claimed.
\end{proof}

The above lemma has the following immediate consequence. 

\begin{theorem}\label{thm:bounded_number_terminals_irreducible} 
There is a function  $f$ such that the number of 
 irreducible facet inducing Steiner graphs $(G,T)$ is bounded by $f(|T|)$ (up to isomorphisms).
\end{theorem}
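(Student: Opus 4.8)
The plan is to show that each of the two parameters $|V(G)|$ and $|E(G)|$ is bounded by a function of $|T|$, and that once both are bounded there are only finitely many graphs (and hence finitely many Steiner graphs) to consider up to isomorphism. The essential work has already been done in Lemma~\ref{lem:bound}, which provides the explicit bounds $|V(G)|\le 3|T|-6$ and $|E(G)|\le |V(G)|+|T|-3$ for every irreducible facet inducing Steiner graph with $|T|\ge 3$; the small cases $|T|=2$ are handled separately below.

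First I would dispose of the degenerate case $|T|=2$. By the discussion following Remark~\ref{rem:reduction_to_irreducibles}, the unique irreducible facet inducing Steiner graph with two terminals is the Steiner edge $(K_2,V(K_2))$, so the count for $|T|=2$ is simply one. For $|T|\ge 3$ I would invoke Lemma~\ref{lem:bound}: combining $|V(G)|\le 3|T|-6$ with $|E(G)|\le |V(G)|+|T|-3$ yields $|E(G)|\le 4|T|-9$. Thus both the number of nodes and the number of edges of any irreducible facet inducing Steiner graph are bounded above by fixed linear functions of $|T|$.

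Next I would count the isomorphism classes. A Steiner graph $(G,T)$ with at most $3|T|-6$ nodes, at most $4|T|-9$ edges, and a distinguished terminal set $T$ of size $|T|$ is a finite combinatorial object: up to isomorphism it is determined by the (labelled) graph on at most $3|T|-6$ vertices together with a choice of which vertices are terminals. The number of such labelled graphs is at most $2^{\binom{3|T|-6}{2}}$, and the number of ways to designate the terminals is at most $2^{3|T|-6}$, so the total number of isomorphism classes is certainly bounded by some explicit function of $|T|$. Setting $f(|T|)$ to be this bound (or its maximum with the value $1$ coming from the $|T|=2$ case) establishes the theorem.

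There is no serious obstacle remaining, since the theorem is essentially a corollary of Lemma~\ref{lem:bound}: the only point that requires a word of care is that we are counting Steiner graphs \emph{up to isomorphism}, so one must phrase the bound in terms of labelled objects (graphs plus a marked terminal set) and observe that a finite bound on the number of labelled objects gives a finite bound on the number of isomorphism classes. Everything else is a routine cardinality estimate; the crux of the argument is entirely contained in the vertex bound $|V(G)|\le 3|T|-6$, whose proof relies on the laminar root basis from Theorem~\ref{thm:laminarRootBasis} and the degree constraints of Remark~\ref{rem:degrees_irreducible}.
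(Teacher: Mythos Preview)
Your proposal is correct and follows precisely the paper's own approach: the paper states the theorem as an immediate consequence of Lemma~\ref{lem:bound}, and your write-up simply makes that immediacy explicit by spelling out the finite counting of labelled graphs and terminal sets and by handling the $|T|=2$ case separately. Nothing substantive differs from the paper's argument.
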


\deleted{Of course, each irreducible facet inducing Steiner graph admits only finitely many facet weights (since a polyhedron has only finitely many facets).} 
In particular, Theorem~\ref{thm:bounded_number_terminals_irreducible}    yields the next result.

\begin{theorem}
\label{thm:bounded_rhs}
There is a function  $g$ such that the facet defining inequalities (in minimum integer form) for every Steiner cut dominant $\cutd(G,T)$ have right-hand sides (and coefficients) that are bounded by $g(|T|)$. 
\end{theorem}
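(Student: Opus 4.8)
The plan is to bound the right-hand side of every facet defining inequality by the least common multiple of the finitely many right-hand sides that can occur for irreducible facet inducing Steiner graphs with at most $|T|$ terminals, and then to bound the coefficients by the right-hand side. First I would reduce to the facet inducing case: by Remark~\ref{rem:reduction_to_facet_graphs} together with Remark~\ref{rem:Gc}, any facet defining inequality $cx\ge\gamma_c(G,T)$ for $\cutd(G,T)$ in minimum integer form is the system of facet weights of the facet inducing Steiner subgraph $(G_c,T)$, whose terminal set still has $|T|$ terminals. Hence it suffices to bound $\gamma_c(G,T)$ and the entries of $c$ for facet inducing Steiner graphs $(G,T)$ with a fixed number of terminals.

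Next I would invoke Remark~\ref{rem:reduction_to_irreducibles} to write such a $(G,T)$ as constructable from irreducible facet inducing Steiner graphs with at most $|T|$ terminals, i.e.\ along a sequence $(G_0,T_0),\dots,(G_r,T_r)=(G,T)$ whose steps are subdivisions or gluings with irreducible pieces. Let $\mathcal{F}$ be the set of all irreducible facet inducing Steiner graphs with at most $|T|$ terminals, which is finite up to isomorphism by Theorem~\ref{thm:bounded_number_terminals_irreducible}. Since each such graph admits only finitely many facet weights, the set of right-hand sides occurring among the members of $\mathcal{F}$ (over all their facet weights) is finite, and I set $M$ to be the least common multiple of these right-hand sides, a quantity depending only on $|T|$.

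The core step is an induction along the construction sequence, writing $\gamma_i$ for the right-hand side of $(G_i,T_i)$ and showing $\gamma_i \mid M$. The base case $(G_0,T_0)\in\mathcal{F}$ is immediate. A subdivision step leaves the right-hand side unchanged (the subdivided weight vector takes the same value on each $T$-Steiner cut), so $\gamma_i=\gamma_{i-1}\mid M$. For a gluing step with a piece $H\in\mathcal{F}$ of right-hand side $\gamma_H$, the relation recorded before Remark~\ref{rem:identifying_x} gives $\gamma_i \mid \operatorname{lcm}(\gamma_{i-1},\gamma_H)$; since $\gamma_{i-1}\mid M$ by the inductive hypothesis and $\gamma_H\mid M$ by definition of $M$, this least common multiple divides $M$, so $\gamma_i\mid M$ as well. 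Hence $\gamma_c(G,T)\le M$. Finally, Part~(7) of Remark~\ref{rem:facets_first_properties} bounds every coefficient by the right-hand side, $c(e)\le\gamma_c(G,T)\le M$, so taking $g:=M$ as a function of $|T|$ proves the theorem.

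The main obstacle — and the point where a naive argument would fail — is that the number of irreducible pieces in the construction, and thus the number of gluing steps, \emph{cannot} be bounded in terms of $|T|$ alone (one may glue arbitrarily many copies of a single Steiner edge, for instance). Consequently one cannot simply multiply per-piece bounds. What rescues the argument is that gluing aggregates right-hand sides through the least common multiple rather than the product, and that $\operatorname{lcm}$ is idempotent, so re-using a piece does not enlarge the bound; the only quantity that matters is the finite set of distinct right-hand sides available in $\mathcal{F}$, which is controlled by Theorem~\ref{thm:bounded_number_terminals_irreducible}.
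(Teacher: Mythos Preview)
Your proposal is correct and follows essentially the same approach as the paper's proof: reduce to facet inducing Steiner graphs via Remark~\ref{rem:reduction_to_facet_graphs}, decompose into irreducibles via Remark~\ref{rem:reduction_to_irreducibles}, use the finiteness from Theorem~\ref{thm:bounded_number_terminals_irreducible}, and observe that the right-hand side of any constructable inequality divides the least common multiple of the right-hand sides of the irreducible pieces. Your write-up is more detailed than the paper's one-sentence proof---in particular you make the induction along the construction sequence explicit and spell out the coefficient bound via Part~(7) of Remark~\ref{rem:facets_first_properties}---but the underlying argument is the same.
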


\begin{proof}
 \replaced[comment={\revrem{2}{24}}]{%
 For each $\tau \in \{2,3,\dots\}$ let us denote by $\mathcal{F}(\tau)$ the set of (isomorphism classes of) irreducible facet inducing Steiner graphs with at most $\tau$ terminals, which is a finite set 
 according to Theorem~\ref{thm:bounded_number_terminals_irreducible}. As every  polyhedron has only finitely many facets, the sets
\begin{equation*}
    \mathcal{F}^{\star}(\tau) = \{((G,T),c)\,:\,(G,T)\in\mathcal{F}(\tau), c \text{ facet weights for $(G,T)$}\}
\end{equation*}
 are finite as well. Since a subdivision operation does not change the right-hand side and a gluing operation produces an inequality (in minimum integer form) whose right-hand side divides the least common multiple of the right-hand sides of the two inequalities that have been glued together 
 (see the statement preceding Proposition~\ref{prop:identifying_x}), 
 the right-hand side (in minimum integer form) of every inequality that can be constructed from $\mathcal{F}^{\star}(\tau)$ divides the least 
 common multiple of the  right-hand sides of the finitely many 
 inequalities  represented by $\mathcal{F}^{\star}(\tau)$. Due to 
 Remarks~\ref{rem:reduction_to_facet_graphs} and~\ref{rem:reduction_to_irreducibles} this proves the theorem.
}{%
This follows from Theorem~\ref{thm:bounded_number_terminals_irreducible} 
via   
Remark~\ref{rem:reduction_to_facet_graphs}
and 
Remark~\ref{rem:reduction_to_irreducibles}, 
 since the right hand-side of every inequality that is constructable from a finite set of inequalities (in minimum integer form) 
 divides the least common multiple of their right-hand-sides.
}
\end{proof}

\section{Steiner trees and Steiner cacti}
\label{sec:trees_and_cacti}

We call a Steiner graph 
$(G,T)$ a  \emph{Steiner tree} if $G$ is a  tree  whose degree-one nodes are all in $T$ (see Figure~\ref{fig:SteinerTree}). 
The Steiner trees are the Steiner graphs that are constructible from  Steiner edges. In particular they are facet inducing, where assigning a weight of one to each edge yields the unique facet weight vector (in minimum integer form).  
We call the corresponding inequalities (with right-hand-side one) \emph{Steiner tree inequalities}.

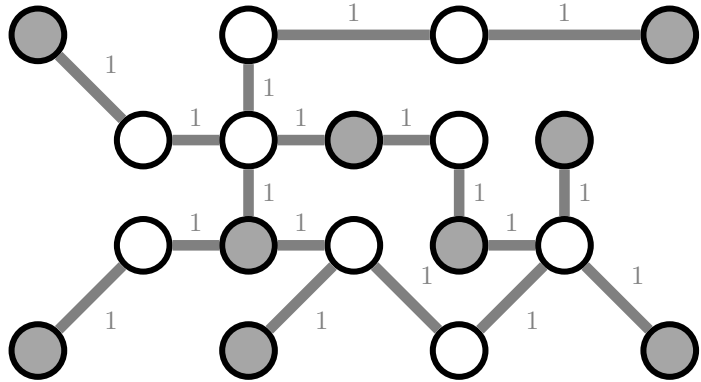
\begin{figure}[ht]
    \renewcommand{\scale}{.7}
    \centering
    \begin{tikzpicture}[scale=\scale]

        
    \node [term] (v1) at (0,6) {};
    \node [nonterm] (v2) at (4,6) {};
    \node [nonterm] (v3) at (8,6) {};
    \node [term] (v4) at (12,6) {};
    \node [nonterm] (v5) at (2,4) {};
    \node [nonterm] (v6) at (4,4) {};
    \node [term] (v7) at (6,4) {};
    \node [nonterm] (v8) at (8,4) {};
    \node [term] (v9) at (10,4) {};
    \node [nonterm] (v10) at (2,2) {};
    \node [term] (v11) at (4,2) {};
    \node [nonterm] (v12) at (6,2) {};
    \node [term] (v13) at (8,2) {};
    \node [nonterm] (v14) at (10,2) {};
    \node [term] (v15) at (0,0) {};
    \node [term] (v16) at (4,0) {};
    \node [term] (v17) at (8,0) {};
    \node [term] (v18) at (12,0) {};
    
    \draw [edge1] (v2) edge["$1$"] (v3);
    \draw [edge1] (v3) edge["$1$"] (v4);
    \draw [edge1] (v5) edge["$1$"] (v6);
    \draw [edge1] (v6) edge["$1$"] (v7);
    \draw [edge1] (v7) edge["$1$"] (v8);
    \draw [edge1] (v10) edge["$1$"] (v11);
    \draw [edge1] (v11) edge["$1$"] (v12);
    \draw [edge1] (v13) edge["$1$"] (v14);
    \draw [edge1] (v1) edge["$1$"] (v5);
    \draw [edge1] (v2) edge["$1$"] (v6);
    \draw [edge1] (v6) edge["$1$"] (v11);
    \draw [edge1] (v8) edge["$1$"] (v13);
    \draw [edge1] (v9) edge["$1$"] (v14);
    \draw [edge1] (v10) edge["$1$"] (v15);
    \draw [edge1] (v12) edge["$1$"] (v16);
    \draw [edge1] (v12) edge["$1$"] (v17);
    \draw [edge1] (v14) edge["$1$"] (v18);
\end{tikzpicture}
    \caption{A Steiner tree with its facet weights (the dark nodes are the terminals).}
    \label{fig:SteinerTree}
\end{figure}

Since for every Steiner graph $(G,T)$, a subset of $E(G)$ contains a $T$-Steiner cut if and only if it intersects $E(G')$ for each Steiner subtree $(G',T)$ of $(G,T)$ the Steiner tree inequalities associated with the Steiner subtrees of $(G,T)$ (together with the nonnegativity constraints) provide an integer programming formulation for $\cutd(G,T)\cap\Z^{E(G)}$.

Conversely, if for a Steiner subgraph $(G',T)$ of the Steiner graph $(G,T)$ the edge set $E(G')$ intersects every $T$-Steiner cut in $(G,T)$ then $(G',T)$ contains a Steiner subtree. This implies the following.

\begin{remark}\label{rem:rhs_one}
    The only facet defining inequalities (in minimum integer form) with right-hand-side one for Steiner cut dominants are the Steiner tree inequalities.  
\end{remark}

A \emph{cactus} is a connected graph (we only consider graphs without loops or multiple edges) that has at 
\replaced[comment={\revrem{1}{9}, \revrem{2}{27}}]{least }{meast} one cycle, but in which every edge is contained in at most one cycle.
 We call the Steiner graph $(G,T)$ a \emph{Steiner cactus} if $G$ is a cactus  whose degree-one nodes are in $T$, and in which every cycle contains at least three nodes that are cut nodes of $G$ or terminals (this in particular implies  $|T|\ge 3$) (see Figure~\ref{fig:SteinerCactus}).

 
 Steiner cacti are the Steiner graphs that are constructible from  Steiner edges and 
 \replaced[comment={\revrem{1}{10}, \revrem{2}{28}}]{\emph{Steiner cycles}, i.e., Steiner graphs $(G,V(G))$ with a cycle $G$}{ Steiner cycles}. In particular, they are facet inducing, where assigning to each edge weight  one or two  depending on whether the edge is contained in some cycle or not, respectively, 
 yields the unique facet weight vector \added{$c$} (in minimum integer form)\added{, where $\gamma_c=2$}. 
We call the corresponding inequalities with right-hand-side two \emph{Steiner cacti inequalities}. 
 \added[comment={\revrem{1}{11}}]{Here, the weights of value two arise since when gluing a Steiner cactus inequality with right-hand-side two and a Steiner edge inequality with right-hand-side one the coefficient of the latter one enters scaled by a factor of two.}

\begin{figure}[ht]
    \renewcommand{\scale}{.7}
    \centering
    \begin{tikzpicture}[scale=\scale]

        
    \node [term] (v1) at (0,6) {};
    \node [nonterm] (v2) at (4,6) {};
    \node [nonterm] (v3) at (8,6) {};
    \node [term] (v4) at (12,6) {};
    \node [nonterm] (v5) at (2,4) {};
    \node [nonterm] (v6) at (4,4) {};
    \node [term] (v7) at (6,4) {};
    \node [nonterm] (v8) at (8,4) {};
    \node [term] (v9) at (10,4) {};
    \node [nonterm] (v10) at (2,2) {};
    \node [nonterm] (v11) at (4,2) {};
    \node [term] (v12) at (6,2) {};
    \node [term] (v13) at (8,2) {};
    \node [nonterm] (v14) at (10,2) {};
    \node [term] (v15) at (0,0) {};
    \node [term] (v16) at (4,0) {};
    \node [term] (v17) at (8,0) {};
    \node [nonterm] (v18) at (12,0) {};
    
    \draw [edge1] (v2) edge["$1$"] (v3);
    \draw [edge2] (v3) edge["$2$"] (v4);
    \draw [edge1] (v5) edge["$1$"] (v6);
    \draw [edge1] (v6) edge["$1$"] (v7);
    \draw [edge1] (v3) edge["$1$"] (v8);
    \draw [edge1] (v4) edge["$1$"] (v9);
    \draw [edge1] (v7) edge["$1$"] (v8);
    \draw [edge1] (v5) edge["$1$"] (v10);
    \draw [edge1] (v10) edge["$1$"] (v11);
    \draw [edge2] (v11) edge["$2$"] (v12);
    \draw [edge1] (v13) edge["$1$"] (v14);
    \draw [edge2] (v1) edge["$2$"] (v5);
    \draw [edge1] (v2) edge["$1$"] (v6);
    \draw [edge1] (v6) edge["$1$"] (v11);
    \draw [edge1] (v9) edge["$1$"] (v14);
    \draw [edge2] (v10) edge["$2$"] (v15);
    \draw [edge2] (v11) edge["$2$"] (v16);
    \draw [edge1] (v13) edge["$1$"] (v17);
    \draw [edge1] (v14) edge["$1$"] (v17);
    \draw [edge1] (v14) edge["$1$"] (v18);
    \draw [edge1] (v4) edge["$1$"] (v18);
\end{tikzpicture}
    \caption{A Steiner cactus with its facet weights (the dark nodes are the terminals).}
    \label{fig:SteinerCactus}
\end{figure}
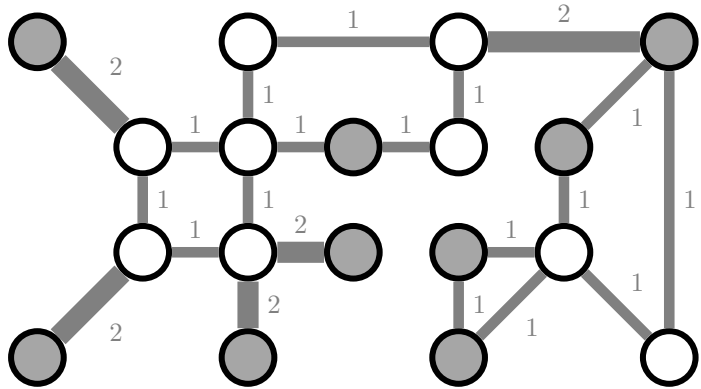

 It seems worth to note that in contrast to Remark~\ref{rem:rhs_one} 
 there are facet defining inequalities (in minimum integer form) with right-hand-side two  for Steiner cut dominants that are not Steiner cacti inequalities. Indeed, whenever $G$ arises from some two-edge \replaced{connected }{conected} graph by replacing each edge by a path consisting of three edges, then $(G,T)$ with $T\subseteq V(G)$ including all nodes of $G$ with degree two is an irreducible facet inducing Steiner graph with facet weights provided by the all-ones vector and right-hand side two. \added[comment={\revrem{2}{29}}]{See Figure~\ref{fig:three_paths} for an example.}
 
\begin{figure}[ht]
    \renewcommand{\scale}{.7}
    \centering
    \begin{tikzpicture}[scale=\scale]

        
    \node [nonterm] (v1) at (4,6) {};
    \node [term] (v2) at (0,4) {};
    \node [term] (v3) at (4,4) {};
    \node [term] (v4) at (8,4) {};
    \node [term] (v5) at (0,2) {};
    \node [term] (v6) at (4,2) {};
    \node [term] (v7) at (8,2) {};
    \node [nonterm] (v8) at (4,0) {};

    \draw [edge1] (v1) edge["$1$"] (v2);
    \draw [edge1] (v1) edge["$1$"] (v3);
    \draw [edge1] (v1) edge["$1$"] (v4);
    \draw [edge1] (v2) edge["$1$"] (v5);
    \draw [edge1] (v3) edge["$1$"] (v6);
    \draw [edge1] (v4) edge["$1$"] (v7);
    \draw [edge1] (v8) edge["$1$"] (v5);
    \draw [edge1] (v8) edge["$1$"] (v6);
    \draw [edge1] (v8) edge["$1$"] (v7);
\end{tikzpicture}
    \caption{An example of a facet inducing Steiner graph that is not a Steiner cactus with facet weights in minimum integer form and right-hand-side two. A root basis is formed by the six cuts defined by the single terminals and the three cuts defined by the pairs of adjacent terminals.}
    \label{fig:three_paths}
\end{figure}

As it simplifies later arguments, we state the following fact (that follows easily from Theorems~\ref{thm:degree_two_nonterminals_come_from_subdividing} and ~\ref{thm:cut_nodes}).
\begin{remark}\label{rem:treecactus_Steiner}
\replaced{Let $(G,T)$ be a Steiner graph with $G$ being a cactus. Then $(G,T)$ is facet inducing if and only if $(G,T)$ is a Steiner cactus; in this case it follows from Theorem~\ref{thm:cut_nodes} and the previous discussion that the unique facet weights are as described in  the definition of a Steiner cactus. }{If $(G,T)$ is a facet inducing Steiner graph where $G$ is a tree or a cactus, then $(G,T)$ is a Steiner tree or a Steiner cactus, respectively.}
\end{remark}

Since the only connected subgraphs of trees are trees, we find that if $(G,T)$ is a Steiner graph with a tree $G$, then $\cutd(G,T)$ is described by the nonnegativity constraints and the Steiner tree inequality  defined by the unique Steiner subtree  of $(G,T)$ with terminal set $T$.  

Similarly, the only connected subgraphs of cacti are trees and cacti. Hence if $(G,T)$ is a Steiner graph with a cactus $G$, then $\cutd(G,T)$ is described by the nonnegativity constraints and the Steiner tree and Steiner cactus inequalities defined by the Steiner subtrees and by the Steiner subcacti of $(G,T)$ with terminal set $T$, respectively.

\section{At most five terminals}
\label{sec:main:II}

The facet inducing Steiner graphs with two terminals are the Steiner paths, among which the irreducible ones are the Steiner edges. 
The purpose of this section is to prove  the following result.

\begin{theorem}
	\label{thm:facets_three_four_five}
	The facet inducing Steiner graphs with at most five terminals are   Steiner trees and  Steiner cacti. 
\end{theorem}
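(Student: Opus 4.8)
By Remark~\ref{rem:reduction_to_irreducibles} it suffices to show that every \emph{irreducible} facet inducing Steiner graph $(G,T)$ with $3\le|T|\le 5$ is a Steiner edge or a Steiner cycle; the general facet inducing graphs then arise by subdividing and gluing, which (by Remark~\ref{rem:treecactus_Steiner}) produce exactly Steiner trees and Steiner cacti. So the plan is to enumerate, for each of $|T|\in\{3,4,5\}$, the possible irreducible facet inducing $(G,T)$ using the size bounds of Lemma~\ref{lem:bound}, and to rule out everything that is not a single cycle.

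\textbf{Key steps.} First I would invoke Lemma~\ref{lem:bound}: an irreducible facet inducing $(G,T)$ with $|T|\ge 3$ satisfies $|V(G)|\le 3|T|-6$ and $|E(G)|\le|V(G)|+|T|-3$. For $|T|=3,4,5$ this caps $|V(G)|$ at $3,6,9$ and $|E(G)|$ at $|V(G)|$, $|V(G)|+1$, $|V(G)|+2$, respectively. By irreducibility (Remark~\ref{rem:degrees_irreducible}) every terminal has degree $\ge 2$, every non-terminal degree $\ge 3$, and $G$ has no cut node, so $G$ is $2$-connected. Second, I would feed these degree constraints into the handshake bound $2|E(G)|\ge 2|T|+3|V(G)\setminus T|$ already used in the proof of Lemma~\ref{lem:bound}; combined with the edge upper bound this forces the non-terminal count and degree sequence into a short list. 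The cleanest case is $|T|=3$: here $|V(G)|\le 3$ and $2$-connectedness plus $\deg\ge 2$ at terminals and $\deg\ge 3$ at non-terminals leaves only the triangle on three terminals, i.e.\ the Steiner cycle. Third, and this is the crux, for $|T|=4$ and $|T|=5$ the bounds still admit several $2$-connected candidate graphs that are not cycles (for instance a theta-graph, a $K_4$, or cycles with a chord path), and each such candidate must be shown \emph{not} to be facet inducing. For this I would use Remark~\ref{rem:Gc}(2) together with Theorem~\ref{thm:laminarRootBasis}: a facet requires a laminar root basis, and when equality is approached in Lemma~\ref{lem:bound} we additionally know $\delta(t)$ is a root for every $t\in T$. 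Examining which laminar families of $T$-Steiner cuts can be simultaneously $c$-minimum, one finds the root cuts of a genuine non-cycle $2$-connected graph either fail to span $\R^{E(G)}$ or force coefficient relations (e.g.\ around a chord or a degree-$\ge 3$ internal vertex) incompatible with minimum integer form, exactly as in the uncrossing argument of Lemma~\ref{lem:uncrossing}.

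\textbf{Main obstacle.} The enumeration itself is routine once the bounds are in place; the real work is the case analysis eliminating the finitely many non-cycle $2$-connected graphs for $|T|=4,5$, and doing so uniformly rather than graph-by-graph. The difficulty is that for $|T|=5$ the vertex bound $|V(G)|\le 9$ together with $|E(G)|\le|V(G)|+2$ permits graphs with up to two independent cycles (two ``extra'' edges beyond a spanning tree), so I must argue that whenever $G$ contains two edge-disjoint cycles sharing structure other than a single cut node, no assignment of minimum-integer-form weights admits a full root basis of laminar $T$-Steiner cuts. The plan is to show that any such configuration forces a node of degree $\ge 3$ through which \emph{every} root must pass on the same side, collapsing the dimension of the span of roots below $|E(G)|$ and contradicting Remark~\ref{rem:Gc}(2). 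I expect most of the technical length to live in verifying this dimension drop for the handful of irreducible non-cactus candidates, and I would organize it by the number of non-terminal vertices and the cyclomatic number $|E(G)|-|V(G)|+1$, handling the tree case ($0$), the single-cycle case ($1$, giving Steiner cycles), and the two-cycle case ($2$, to be excluded) separately.
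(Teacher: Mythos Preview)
Your high-level outline---reduce to irreducibles, invoke the size bounds of Lemma~\ref{lem:bound}, enumerate, and eliminate the non-cycles---is indeed the paper's strategy. But two concrete tools are missing from your plan, and without them the case analysis neither shrinks to a manageable list nor closes.

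First, the paper does not work with the raw degree bound $\deg(v)\ge 3$ for non-terminals. It introduces the \YDred\ (Definition~\ref{def:YD}, Lemmas~\ref{lem:YD_facet} and~\ref{lem:YD_no_tree_or_cactus}) and chooses a \emph{minimal} counterexample, so that any degree-three non-terminal could be reduced to a smaller counterexample. This forces $\deg(v)\ge 4$ for all non-terminals, yielding the sharper bound $m\ge 2n-\tau$ and hence $n\le 2\tau-3$; for $\tau=5$ this cuts the vertex bound from $9$ down to $7$ and, together with an ear-decomposition count, leaves only seven $(\tau,n,m)$ triples to examine. Your bound $n\le 9$ with non-terminals of degree~$3$ allowed would leave a substantially longer list of two-connected candidates.

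Second, your elimination step (``a dimension drop because every root passes on the same side of some degree-$\ge 3$ node'') is not an argument the paper makes, and I do not see how to make it uniform. The paper instead uses an external result: Theorem~\ref{thm:CFP} of Conforti--Fiorini--Pashkovich, which says that a facet of $\cutd(G)$ with right-hand side $>2$ forces a prism or pyramid minor. In each surviving case the paper shows (using \eqref{eq:tightnessInCases}, i.e.\ that all terminal stars are roots when the edge bound is tight) that the inequality is in fact valid for the full cut dominant $\cutd(G)$, observes that the small candidate graphs have no prism or pyramid minor, and concludes $\gamma\le 2$, which is then easily contradicted. Lemma~\ref{lem:Hamiltonian_cycle} (a Hamiltonian cycle plus only non-crossing chords cannot be facet inducing) is a further device used to prune candidates. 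None of these ingredients appear in your sketch, and the laminar-root-basis / uncrossing machinery you propose to lean on does not by itself produce the needed contradictions for, say, the prism with all six nodes terminal.
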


Theorem~\ref{thm:facets_three_four_five} implies that, for $\tau\in\{3,4,5\}$, the irreducible facet inducing Steiner graphs with $\tau$ terminals are cycles of length $\tau$ with all nodes being terminals.
In order to prove Theorem~\ref{thm:facets_three_four_five} it remains to show that every facet inducing Steiner graph $(G,T)$ with $|T|\in\{3,4,5\}$ is a Steiner tree or a Steiner cactus. 
\added[comment=We removed Lemma~6.]{}
The following operation turns out to be very helpful in reducing the sizes of graphs  that we need to consider.

\begin{lemma}\label{le-Y} 
Let $(G,T)$ be a facet inducing Steiner graph with facet weights $c$ and  $v\in V(G)\setminus T$ a non-terminal node of degree three with neighbors $v_1$, $v_2$, and $v_3$. With  $e_i:=vv_i$ for all $i\in \{1,2,3\}$ the following statements hold:

\begin{enumerate}
\item No root contains $\{e_1, e_2,e_3\}$.
\item We have 
\begin{align*}
    c(e_1) & \le c(e_2) + c(e_3) \\  
    c(e_2) & \le c(e_3) + c(e_1) \\  
    c(e_3) & \le c(e_1) + c(e_2)  \,, 
\end{align*}
where at most one of the inequalities is satisfied at equality.
\item 
If $c(e_i)< c(e_j)+c(e_k)$ holds for $i,j,k$ with  $\{i,j,k\}=\{1,2,3\}$ then no root contains both $e_j$ and $e_k$, 
\replaced[comment={\revrem{2}{32}}]{ which further implies that}{ and} 
$v_jv_k$ is not an edge of $G$.
\end{enumerate}
\end{lemma}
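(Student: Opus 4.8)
The whole argument rests on a single local tool, Remark~\ref{rem:delta_nonterm}. Since $v\in V(G)\setminus T$ has degree three, its star is $\delta(v)=\{e_1,e_2,e_3\}$, and the remark tells us that every root $\delta(S)$ of $c$ satisfies $c(\delta(v)\cap\delta(S))\le c(\delta(v)\setminus\delta(S))$. My plan is to record, for an arbitrary root, which of $e_1,e_2,e_3$ lie in $\delta(S)$ --- there are at most seven such patterns --- and to read off from this bound the weight inequality it imposes. All three parts then fall out of this dictionary together with the basic facts that $c>\zerovec$ and that the roots span $\R^{E(G)}$. Part~(1) is already immediate: were all of $e_1,e_2,e_3$ contained in a root $\delta(S)$, the right-hand side of the bound would be $c(\varnothing)=0$ while the left-hand side equals $c(e_1)+c(e_2)+c(e_3)>0$, a contradiction.

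For Part~(2) I would prove the three triangle inequalities by contradiction, say assuming $c(e_1)>c(e_2)+c(e_3)$. By Part~(5) of Remark~\ref{rem:facets_first_properties} some root contains $e_1$, and by Part~(1) it cannot contain all three edges; so its pattern on $\{e_1,e_2,e_3\}$ is $\{e_1\}$, $\{e_1,e_2\}$, or $\{e_1,e_3\}$, and the bound yields respectively $c(e_1)\le c(e_2)+c(e_3)$, $c(e_1)+c(e_2)\le c(e_3)$, or $c(e_1)+c(e_3)\le c(e_2)$ --- each contradicting our assumption. Hence no root would contain $e_1$, contradicting Part~(5) once more. Symmetry then gives all three inequalities, and the ``at most one equality'' clause is a one-line computation: two simultaneous equalities among the three force one of the weights to vanish, against $c>\zerovec$.

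Part~(3) opens the same way. If $c(e_i)<c(e_j)+c(e_k)$, a root containing both $e_j$ and $e_k$ would (omitting $e_i$ by Part~(1)) have pattern $\{e_j,e_k\}$ and hence satisfy $c(e_j)+c(e_k)\le c(e_i)$, a contradiction; so no root contains both $e_j$ and $e_k$. For the non-adjacency of $v_j$ and $v_k$ I would argue again by contradiction: if $f:=v_jv_k$ were an edge, then $e_j,e_k,f$ would form a triangle, which every root meets in an even number of edges, i.e.\ zero or two. Combining this parity with the fact just proved that no root contains both $e_j$ and $e_k$ forces every root $\delta(S)$ to satisfy $x_f=x_{e_j}+x_{e_k}$ (both sides are $0$ when none of the three cross, and both are $1$ when exactly one of $e_j,e_k$ crosses, which then forces $f$ to cross as well). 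Thus the nontrivial linear relation $x_f-x_{e_j}-x_{e_k}=0$ holds on all roots, contradicting the existence of a root basis spanning $\R^{E(G)}$.

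The genuinely structural step, and the one I expect to be the main obstacle, is this last deduction in Part~(3): one must marry the parity constraint on the triangle $e_j,e_k,f$ with the ``no root contains both $e_j,e_k$'' property to manufacture an \emph{explicit} linear dependence among the roots. Everything else is either a direct reading of Remark~\ref{rem:delta_nonterm} against the (at most seven) cut-patterns at $v$ or elementary arithmetic, so the bookkeeping of patterns is routine rather than delicate.
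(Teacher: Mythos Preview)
Your proof is correct and follows essentially the same approach as the paper: both use the local inequality $c(\delta(v)\cap\delta(S))\le c(\delta(v)\setminus\delta(S))$ (Remark~\ref{rem:delta_nonterm}) to derive Parts~(1) and~(2) and the first half of Part~(3), and then combine the triangle-parity argument with the existence of a root basis to obtain the non-adjacency of $v_j$ and $v_k$. Your version is simply more explicit about the case analysis; the ``structural step'' you flag as the main obstacle is exactly the one the paper singles out as well.
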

\begin{proof} 
\replaced[comment={\revrem{1}{12}}]{Due to $v \in V(G) \setminus T$, for every root taking its symmetric difference with $\delta(v)$ results in another $T$-Steiner cut and therefore does not decrease the $c$-value. This immediately implies the first statement, and the second one follows as every edge is contained in at least one root (see Proposition~\ref{prop:facets_first_properties}). }%
{The first statement follows from the fact that $v\in V(G)\setminus T$, which also implies the inequalities in the second statement (as every edge is contained in at least one root, see Proposition~\ref{prop:facets_first_properties}).%
}
Clearly, because of $c>\zerovec{}$, no two of those inequalities can be satisfied at equality. 

To prove the third statement, observe that in case of  $c(e_i)< c(e_j)+c(e_k)$ (for $\{i,j,k\}=\{1,2,3\}$) no root contains both $e_j$ and $e_k$ because of $v\in V(G)\setminus T$. Consequently, $v_jv_k \in E(G)$ in this case would  imply that every root $x$ satisfies $x_{v_jv_k}=x_{e_j}+x_{e_k}$, which, however, contradicts the fact that the roots span $\R^{E(G)}$.
\end{proof}

\begin{definition}
	\label{def:YD}
	Let $(G,T)$ be a 
 \replaced[comment=It seems more natural not to restrict the definition.]{ }{facet inducing} Steiner graph with facet weights $c$ and  $v\in V(G)\setminus T$ a non-terminal node of degree three with neighbors $v_1$, $v_2$, and $v_3$. For each $i \in \{1,2,3\}$, we denote by  $e_i:=vv_i\in E(G)$ the edge in $G$ connecting $v$ to $v_i$ and by $f_i$ the edge (possibly in $E(G)$ or not) connecting the two neighbors of $v$ different from $v_i$, and define $\zeta_i := c(\delta_G(v))-2c(e_i)$.
Let $G'$ be the graph with
	$
		V(G') = V(G) \setminus\{v\}
	$
	and
 \added[comment={\revrem{2}{33}}]{}
    \begin{equation*}    
        E(G') = \added{(}E(G) \setminus \{e_1,e_2,e_3\}\added{)} 
        \cup  \setdef{f_i}{\zeta_i > 0}\,.
    \end{equation*}
	Let \(\replaced{c^{\star}}{c'}\in\R^{E(G')}\) be the vector obtained from $c$ by removing the components indexed by $e_1$, \(e_2\), and \(e_3\), and setting
    \[
        \replaced{c^{\star}}{c'}(f_i) := c(f_i) + \frac{\zeta_i}{2}  
        \added{\quad \in \frac12 \Z}
    \]
	for all $i \in \{1,2,3\}$ with $f_i \in E(G')$ (with $c(f_i):=0$ in case of $f_i \not\in E(G)$).
 \added[comment={\revrem{1}{13}}]{We denote by $c'$ the minimum integer form of $c^{\star}$, i.e., $c'$ equals $c^{\star}$ or $2c^{\star}$ depending on whether $c^{\star}$ has a fractional component or not.}

	Then we say that the Steiner graph $(G',T')$ with $T'=T$ and  $c'$ have  been obtained from $(G,T)$ and $c$ by applying the \emph{\YDred\  at $v$} (see Figure~\ref{fig:YDred}).
\end{definition}
\begin{figure}
    \centering
    \begin{tikzpicture}

        
    \node [pp_node] (v) at (0,0) {$v$};
    \node [pp_node] (v1) at (30:2) {$v_1$};
    \node [pp_node] (v2) at (150:2) {$v_2$};        
    \node [pp_node] (v3) at (270:2) {$v_3$};

	\draw [pp_edge] (v) -- node [above, near start] {$e_1$} (v1) ;
	\draw [pp_edge] (v) -- node [above, near start] {$e_2$} (v2);
	\draw [pp_edge] (v) -- node [right, near start] {$e_3$} (v3);        
	\draw [pp_edge] (v1) -- node [above] {$f_3$} (v2);
	\draw [pp_edge] (v2) -- node [below = 5pt] {$f_1$} (v3);
	\draw [pp_edge] (v3) -- node [below = 5pt] {$f_2$} (v1);        
\end{tikzpicture}
    \caption{Notations used for \YDred s.}
    \label{fig:YDred}
\end{figure}

\deleted[comment=Appeared to be unnecessary.]{Note that applying a \YDred\ as in Definition~\ref{def:YD} to a simple graph (recall that we restrict our considerations to simple graphs throughout the paper) results in a simple graph again. }
Lemma~\ref{le-Y} 
\deleted{now} implies the following.

\begin{remark}
	\label{rem:zeta123}
\replaced[comment=Now we need to restrict explicitly to facet inducing Steiner graphs.]{If one applies a \YDred\ to a facet inducing Steiner graph $(G,T)$, then using the notations introduced in Definition \ref{def:YD}}{In Definition \ref{def:YD},}  we have
	\begin{equation}
		\label{eq:zeta123:nonneg}
		\zeta_i \ge 0 \quad\text{for all }i \in \{1,2,3\}
	\end{equation}
	with 
	\begin{equation}
	\label{eq:zeta123:equality}
	\zeta_i = 0 \quad\text{for at most one }i \in \{1,2,3\}\,.
    \end{equation}
    Furthermore, if 	$\zeta_i >0$ then $f_i \not\in E(G)$ holds. \added[comment={\revrem{2}{34}}]{(See Figure~\ref{fig:YDconf}.)}
\end{remark}

\begin{figure}
    \centering
    \begin{tikzpicture}

        
    \node [pp_node] (v) at (0,0) {$v$};
    \node [pp_node] (vi) at (30:2) {$v_i$};
    \node [pp_node] (vj) at (150:2) {$v_j$};        
    \node [pp_node] (vk) at (270:2) {$v_k$};

	\draw [pp_edge] (v) -- node [above, near start] {$e_i$} (vi) ;
	\draw [pp_edge] (v) -- node [above, near start] {$e_j$} (vj);
	\draw [pp_edge] (v) -- node [right, near start] {$e_k$} (vk);        
\end{tikzpicture}
    \hspace{1cm}
    \begin{tikzpicture}

        
    \node [pp_node] (v) at (0,0) {$v$};
    \node [pp_node] (vi) at (30:2) {$v_i$};
    \node [pp_node] (vj) at (150:2) {$v_j$};        
    \node [pp_node] (vk) at (270:2) {$v_k$};

	\draw [pp_edge] (v) -- node [above, near start] {$e_i$} (vi) ;
	\draw [pp_edge] (v) -- node [above, near start] {$e_j$} (vj);
	\draw [pp_edge] (v) -- node [right, near start] {$e_k$} (vk);        
	\draw [pp_edge] (vi) -- node [above] {$f_k$} (vj);
\end{tikzpicture}    
    \caption{The two possibilities for the subgraph of $G$ induced by the non-terminal node $v$ and its three neighbors when a \YDred\ is applied.} 
    \label{fig:YDconf}
\end{figure}

\begin{lemma}
	\label{lem:YD_facet}
	Applying a \YDred\ to a facet inducing Steiner graph $(G,T)$ with facet weights $c$ results in a facet inducing Steiner graph $(G',T')$ with $T'=T$ and  facet weights $c'$
 \added{(as defined in Definition~\ref{def:YD}) with $\gamma_{c'}(G',T') = \gamma_c(G,T)$ or $\gamma_{c'}(G',T')=2\gamma_{c}(G,T)$}.
\end{lemma}

\begin{proof} 
\added{We continue to use the notations of Definition~\ref{def:YD}. Recall that we have $c'=c^{\star}$ or $c'=2c^{\star}$, in particular $\gamma_{c'}(G',T') = \gamma_{c^{\star}}(G',T')$ or $\gamma_{c'}(G',T')=2\gamma_{c^{\star}}(G',T')$. } We first observe that  $\replaced{c^{\star}}{c'}(f)>0$ holds for all $f \in E(G')$  (see~\eqref{eq:zeta123:nonneg}) . 

With $\mathcal{S}'$ denoting the family of all  $S\subseteq V(G)\setminus \{v\}$ with
$S \cap T \ne \varnothing$, $T \not\subseteq S$, and
$|S \cap \{v_1,v_2,v_3\}| \le 1$, we have 
\begin{equation}\label{YNabla-weights}
    \replaced{c^{\star}}{c'}(\delta_{G'}(S)) = c(\delta_G(S)\added{)}
    \quad
    \text{for all } S \in \mathcal{S}'\,.
\end{equation}
Indeed, this clearly holds for all $S \in \mathcal{S}'$ with $S \cap \{v_1,v_2,v_3\}=\varnothing$, and if $S \in \mathcal{S}'$ satisfies, say, $S \cap \{v_1,v_2,v_3\}=\{v_1\}$ we find
\begin{equation*}
     \replaced{c^{\star}}{c'}(\delta_{G'}(S)) 
    = c(\delta_G(S)) - c(e_1)  + \frac{\zeta_2}{2} + \frac{\zeta_3}{2}
    = c_G(\delta_G(S)\added{)}
\end{equation*}
as well. 

As the family $\delta_{G'}(S)$ ($S \in \mathcal{S}'$) is the family of all $T'$-Steiner cuts in $G'$, and since $\delta_G(S)$ is a $T$-Steiner cut in $G$ for each $T'$-Steiner cut $\delta_{G'}(S)$ in $G'$ with $S \in \mathcal{S}'$ (recall $T'=T$), the equations in~\eqref{YNabla-weights} imply
 $\gamma_{\replaced{c^{\star}}{c'}}(G',T') \ge \gamma_c(G,T)$.

In order to prove that $c'$ \added{(which equals $c^{\star}$ or $2c^{\star}$)} provide facet weights, 
let the cuts $\delta_G(S)$ for $S \in \mathcal{S}$ form a root basis  of $c$ with $v \not\in S$ for all $S \in \mathcal{S}$. For each $S \in \mathcal{S}$, the cut  $\delta_{G'}(S)$ clearly is a $T'$-Steiner cut in $G'$, for which we claim 
\begin{equation}\label{YNablaRootsRoots}
\replaced{c^{\star}}{c'}_{G'}(\delta_{G'}(S)) = c_G(\delta(S) = \gamma_c(G,T)\,,
\end{equation}
where this in particular implies $\gamma_{\replaced{c^{\star}}{c'}}(G',T') = \gamma_c(G,T)$\,.
Indeed, due to Lemma~\ref{le-Y} each $S \in \mathcal{S}$ satisfies $|S\cap\{v_1,v_2,v_3\}| \le 2$. If that intersection has less than two elements, then we have $S \in \mathcal{S}'$ (with $\mathcal{S}'$ defined above), thus \eqref{YNablaRootsRoots} is one of the equations~\eqref{YNabla-weights}. Otherwise, say if $S\cap\{v_1,v_2,v_3\} = \{v_2,v_3\}$ holds, we have $\zeta_1 = 0$ due to Lemma~\ref{le-Y}, and hence
\begin{align*}
     \replaced{c^{\star}}{c'}(\delta_{G'}(S)) 
    &= c(\delta_G(S)) - c(e_2) - c(e_3)  + \frac{\zeta_2}{2} + \frac{\zeta_3}{2}\\
    &= c_G(\delta_G(S)) - \zeta_1\\
    &= c_G(\delta_G(S))
\end{align*}
as well. 

We thus have proved that the cuts $\delta_{G'}(S)$ ($S \in \mathcal{S}$) are roots of \added{$c^{\star}$, and hence roots of} $c'$, and therefore it suffices to show that they span  $\R^{E(G')}$. 
	 Towards this end, let  $\varphi:\R^{E(G)}\rightarrow\R^{E(G')}$ be the  linear map with  
$
\varphi(x)_e = x_e 
$
for all $e \in E(G') \cap E(G)$
and
$\varphi(x)_{f_i} = x(\delta_G(v))-x_{e_i}$ for all $i \in \{1,2,3\}$ with $f_i \in E(G') \setminus E(G)$. As the image of $\varphi$ is the entire space $\R^{E(G')}$, it hence is enough to argue that for each $S \in \mathcal{S}$ we have $\varphi(\delta_G(S)) = \delta_{G'}(S)$. As we clearly have $\varphi(\delta_G(S))_{e} = \delta_{G'}(S)_{e}$ for every $e \in E(G') \cap E(G)$, we thus only have to establish  $\varphi(\delta_G(S))_{f_i} = \delta_{G'}(S)_{f_i}$ for each $i \in \{1,2,3\}$ with $f_i \in E(G') \setminus E(G)$. For such an index $i$ and $\{1,2,3\}=\{i,j,k\}$ we have $\varphi(\delta_G(S))_{f_i}=|\delta_G(S) \cap \{e_j,e_k\}|$ and $\zeta_i > 0$, where according to Lemma~\ref{le-Y} the latter inequality  implies 
$|\delta_G(S) \cap \{e_j,e_k\}| \le 1$. Due to   $|S\cap\{v_j,v_k\}|=|\delta_G(S) \cap \{e_j,e_k\}|$, and as we have $\delta_{G'}(S)_{f_i}=1$ if and only if $|S \cap \{v_j,v_k\}| = 1$ holds, this completes the proof.
    \end{proof}

It is easy to see that applying a \YDred\ (according to Definition~\ref{def:YD}) to a Steiner tree  or to a Steiner cactus yields a Steiner cactus. 
\added[comment={\revrem{1}{13}}]{Indeed, relying on the notations introduced in Definition~\ref{def:YD}, if $v$  is not contained in any cycle then (both in the tree as well as in the cactus case) the star $e_1,e_2,e_3$ is replaced by the triangle $f_1,f_2,f_3$ of new edges, and if $v$ is contained in a (then unique) cycle containing, say,  $e_1$ and $e_2$, then we have $\zeta_1 = \zeta_2 = 1$ and $\zeta_3 = 0$, and therefore  in that cycle $e_1,e_2$ are replaced by  $f_1,f_2$ ($e_3$ is removed, $f_3$ is not newly introduced, but was already present in case that cycle is a triangle).} 
We will, however, \added{also} need the following weakened version of the reverse statement that requires some \added{more involved} arguments to be established. 

\begin{lemma}
	\label{lem:YD_no_tree_or_cactus}
	If $(G',T')$ is a Steiner tree or a Steiner cactus that has been obtained from a facet inducing Steiner graph $(G,T)$ by applying a \YDred\ (according to Definition~\ref{def:YD}), then $(G,T)$ is a Steiner tree or a Steiner cactus.
\end{lemma}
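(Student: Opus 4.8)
The plan is to reduce everything to a statement about the underlying graph: I will show that $G$ is a tree or a cactus, and then invoke Remark~\ref{rem:treecactus_Steiner} (using that $(G,T)$ is facet inducing) to upgrade this to the conclusion that $(G,T)$ is a Steiner tree or a Steiner cactus. Thus I only have to control the block structure of $G$, i.e.\ forbid two cycles sharing an edge. I keep the notation of Definition~\ref{def:YD}: $v$ is the reduced degree-three non-terminal with neighbours $v_1,v_2,v_3$, edges $e_i=vv_i$, candidate edges $f_i$, and $\zeta_i=c(\delta_G(v))-2c(e_i)$; write $H:=G\setminus v$ and $\gamma:=\gamma_c(G,T)$, which equals $\gamma_{c'}(G',T')$ by Lemma~\ref{lem:YD_facet}. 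By Remark~\ref{rem:zeta123} either all $\zeta_i>0$, so $G'$ carries the triangle on $v_1,v_2,v_3$, or, after relabelling, exactly $\zeta_1=0$, so $G'$ carries the two edges $f_2,f_3$ incident to $v_1$. Since $G$ is connected and $v$ has no neighbours besides $v_1,v_2,v_3$, every component of $H$ contains at least one of them.

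The heart of the proof is a \emph{separation claim}: in $H$ the vertex $v_1$ lies in a component containing neither $v_2$ nor $v_3$ (and when all $\zeta_i>0$ the same holds symmetrically, so $v_1,v_2,v_3$ occupy three distinct components). I would establish this by transferring a root of $G$ to $G'$. By Remark~\ref{rem:facets_first_properties}(5) the edge $e_1$ lies in some root $\delta_G(S)$ of $c$; since $\zeta_2,\zeta_3>0$, Lemma~\ref{le-Y} forbids any root from containing $\{e_1,e_2\}$ or $\{e_1,e_3\}$, so this root separates $v_1$ from $v,v_2,v_3$. Writing $\bar S$ for the side containing $v_1$, we have $v\notin\bar S$ and $|\bar S\cap\{v_1,v_2,v_3\}|=1$, hence $\bar S\in\mathcal S'$ and \eqref{YNabla-weights} gives $c'(\delta_{G'}(\bar S))=c(\delta_G(\bar S))=\gamma$, so $\delta_{G'}(\bar S)$ is a root of $c'$. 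But $\delta_{G'}(\bar S)$ contains both $f_2=v_1v_3$ and $f_3=v_1v_2$, each of $c'$-weight at least one; as $G'$ is a Steiner tree or cactus its right-hand side $\gamma$ is at most two, forcing $\gamma=2$, $c'(f_2)=c'(f_3)=1$, and $\delta_{G'}(\bar S)=\{f_2,f_3\}$. Therefore $\{f_2,f_3\}$ is a cut of $G'$ separating $v_1$ from $v_2,v_3$; since deleting $f_2,f_3$ leaves a graph containing $H$, the vertex $v_1$ is already separated from $v_2$ and $v_3$ in $H$. When all $\zeta_i>0$, applying the identical argument to $e_2$ yields the symmetric statement.

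Granting the separation claim, I finish by inspecting blocks. If $v_1,v_2,v_3$ lie in three distinct components $H_1,H_2,H_3$ of $H$, then $G$ is obtained by joining these disjoint graphs at $v$ through the three bridges $e_1,e_2,e_3$; its blocks are these bridges together with the blocks of $H_1,H_2,H_3$, which are exactly the blocks of $G'$ other than the triangle and hence edges or cycles. If instead $\zeta_1=0$ and $v_2,v_3$ share a component $H_{23}\neq H_1\ni v_1$, then $e_1$ is the only edge joining $H_1$ to $v$ and is a bridge of $G$, while $e_2,e_3$ together with a $v_2$–$v_3$ path $R$ in $H_{23}$ form a cycle $\tilde C$. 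This $\tilde C$ is precisely the $G'$-cycle through $f_2,f_3$ (a block of the cactus $G'$) with its length-two subpath through $v_1$ rerouted through $v$, so $\tilde C$ is a cycle-block and every remaining block is inherited unchanged from $G'$. In every case each block of $G$ is an edge or a cycle, so $G$ is a tree or a cactus, as required.

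The main obstacle is exactly the separation claim, and this is where facet-inducingness of $G$ is indispensable: without it the lemma is false. Indeed, if $v_1$ were joined to both $v_2$ and $v_3$ by internally disjoint paths $P_2,P_3$ of $H$, the reduction would produce a perfectly legitimate cactus $G'$, whereas $G$ itself would carry a theta — the three internally disjoint $v$–$v_1$ paths $e_1$, $e_3P_2$, $e_2P_3$ — and hence fail to be a cactus. The facet condition rules this out through Remark~\ref{rem:facets_first_properties}(5): forcing $e_1$ into a root and pushing it through the reduction would demand a root of the Steiner cactus $G'$ to contain two edges at $v_1$ coming from two different cycles (or a bridge plus a cycle edge), which the right-hand side $2$ does not permit. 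Checking that the transferred cut really lands in $\mathcal S'$ (so that \eqref{YNabla-weights} applies) and that the rigid $1$/$2$-weighting of Steiner cacti leaves no slack is the delicate part of the argument.
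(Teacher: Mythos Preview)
Your proof is correct and takes a genuinely different route from the paper's. The paper argues by case analysis on the cactus structure of $G'$: after arranging $\zeta_1,\zeta_2>0$, it splits on whether the full triangle $\{f_1,f_2,f_3\}$ lies in $E(G')$, and in the non-triangle case further splits on how many of $f_1,f_2$ lie in cycles of $G'$. The delicate sub-case --- $f_1$ and $f_2$ lying in two \emph{different} cycles of $G'$ meeting only at $v_3$ --- is eliminated by extracting (via Theorem~\ref{thm:cut_nodes}) a facet-inducing Steiner subgraph that is a Hamiltonian cycle plus one chord, contradicting Lemma~\ref{lem:Hamiltonian_cycle}. You instead prove a clean separation claim by pushing a root of $c$ through the reduction and exploiting the rigid $1$/$2$-weighting of Steiner cacti, which bypasses Lemma~\ref{lem:Hamiltonian_cycle} entirely and makes the subsequent block analysis almost mechanical. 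Your argument is more conceptual and somewhat shorter; the paper's is more purely graph-theoretic and reuses Lemma~\ref{lem:Hamiltonian_cycle}, which is independently needed in the proof of Proposition~\ref{prop:facets_three_four_five}. One small caveat: $c'$ as produced by Definition~\ref{def:YD} need not literally be in minimum integer form (the $\zeta_i/2$ can be half-integers), so your ``each of $c'$-weight at least one'' and ``$\gamma=2$'' should be phrased as $c'=\lambda w$ for the standard cactus weight vector $w$, whence $c'(f_2)+c'(f_3)\ge 2\lambda=\gamma$ with equality forcing both edges to be cycle edges; the conclusion $\delta_{G'}(\bar S)=\{f_2,f_3\}$ (and with it your separation claim) survives unchanged.
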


\begin{proof}
	We continue to use the notations from Definition~\ref{def:YD} and furthermore denote by $G_0$ the subgraph of $G$ induced by $\{v,v_1,v_2,v_3\}$. 
	Due to~\eqref{eq:zeta123:equality}, after possibly renumbering the nodes, we have
			$\zeta_1,\zeta_2 > 0$. This readily implies $f_1,f_2 \in E(G')$ and (see Remark~\ref{rem:zeta123}) $f_1,f_2 \not\in E(G)$. 
    \added[comment={\revrem{2}{36}}]{We subdivide the remainder of the proof into two cases.}

    \paragraph*{\added{Case~1: $f_3\in E(G')$}\ \\}
	\deleted{Let us first consider the case $f_3\in E(G')$, i.e., }
    \added{In this case, } the entire triangle $\{f_1,f_2,f_3\}$ is contained in  $E(G')$. \replaced{Hence }{Then} $G'$ (which is a tree or a cactus) is a cactus, and the graph obtained from $G'$ by removing the three edges $f_1,f_2,f_3$ 
	has three  connected components
	 $G_1,G_2,G_3$, where each $G_i$ intersects  the triangle  exactly in node $v_i$. 
	As $G'$ is a cactus, every graph $G_i$ is a tree (possibly consisting of the single node $v_i$) or a cactus. Due to $f_1,f_2\not\in E(G)$ the graph $G_0$ 
	is either the star with edge set 
	$
	E(G_0)=\{e_1,e_2,e_3\}
	$
	 or the cactus with edge set 
	 $
	 	E(G_0)=\{e_1,e_2,e_3,f_3\}$.
	Therefore, 
	$
		G=G_0 \cup G_1 \cup G_2 \cup G_3
	$
	is a tree or a cactus.

    \paragraph*{\added{Case~2: $f_3\not\in E(G')$}\ \\}
    \deleted{It remains to consider the case $f_3\not\in E(G')$.} 
    In this case, by definition of $G'$, we also have $f_3\not\in E(G)$. Thus $G$ is the graph that arises from $G'$ by removing $f_1$ and $f_2$ and adding the star $G_0$ with edge set 
	$
	E(G_0)=\{e_1,e_2,e_3\}$.
	
	If at most one of the edges $f_1,f_2$ is contained in some cycle of $G'$ then $G$ is connected with no edge being contained in more than one cycle, hence $G$ is a tree or a cactus.
	
	Therefore it remains to consider the case that $f_1$ is contained in some cycle $C_1\subseteq E(G')$ and $f_2$ is contained in some cycle $C_2\subseteq E(G')$ of the cactus $G'$. 
	As $G'$ is a cactus, the two cycles are either identical  or the intersection of their node sets is $\{v_3\}$. 
	
	If $C_1=C_2$ holds, then $G$ is a cactus (arising from the cactus $G'$ by inserting the star $G_0$ ``into that cycle'' and removing $f_1,f_2$). 

	Otherwise (i.e., the two cycles $C_1$ and $C_2$ with $f_1\in C_1$ and $f_2\in C_2$ intersect only in the common node $v_3$ of $f_1$ and $f_2$)
 \replaced{%
 the graph $G$ contains the cycle $C_1\cup C_2 \setminus\{f_2,f_1\}\cup\{e_1,e_2\}$ and its chord $e_3$. This implies that each root of $c$ that contains $e_3$ (and such a root exists due to Proposition~\ref{prop:facets_first_properties}) contains at least three edges, which, however, (due to $c(e) \ge 1$ for all $e \in E(G)$) contradicts  $\gamma_c(G,T) \le \gamma_{c'}(G',T') \le 2$, where the first inequality follows from Lemma~\ref{lem:YD_facet} and the second one is due to $(G',T')$ being a Steiner tree or a Steiner cactus.
 }{%
	let $H$ be the cactus that arises from the cactus $G'$ by removing $f_1,f_2$ and adding the node $v$ as well as the two edges $e_1,e_2$. The cactus $H$ contains the cycle 
    $C:=C_1\cup C_2 \setminus\{f_2,f_1\}\cup\{e_1,e_2\}$ 
 with node set $U$. The graph $G$ arises from adding the chord $e_3$ of $C$ to the cactus $H$. 
 Denoting by $G''$ the subgraph $(U,C\cup\{e_3\})$ of $G$ and by $T''$ the union of $U\cap T$ and the set of all  nodes in $U$ that are cut nodes of $G$, we conclude from Theorem~\ref{thm:cut_nodes} that $(G'',T'')$ is a facet inducing Steiner graph. 
 However, $G''$ is a Hamiltonian cycle (with edge set $C$) plus the one edge $e_3$, which contradicts Lemma~\ref{lem:Hamiltonian_cycle}.} 
\end{proof}

Finally,  we will make use of  the main result of Conforti, Fiorini, and Pashkovich~\cite{ConfortiFioriniPashkovich2016}, which translated into our terminology is the following.

\begin{theorem}[Thm. 5 and Lem.~4 in~\cite{ConfortiFioriniPashkovich2016}]
	\label{thm:CFP}
	If $cx\ge\gamma$ (in minimum integer form) defines a facet of $\cutd(G)$   with $\gamma > 2$ then $G$ has a minor that is a prism or a pyramid (see Figure~\ref{fig:prismPyramid}).
\end{theorem}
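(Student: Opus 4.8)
The statement is the one-directional implication ``bad implies a forbidden minor'', and I would prove it in the contrapositive-friendly form: starting from a facet-defining $cx\ge\gamma$ with $\gamma\ge 3$, I would locate a prism or pyramid minor (Figure~\ref{fig:prismPyramid}) inside $G$. As a first normalization I would pass to $G=G_c$ with $c$ in minimum integer form, so that by Remark~\ref{rem:Gc} the weights $c$ carry a root basis, and I would reduce to the case that $G$ is $2$-connected: if $w$ is a cut node, then Theorem~\ref{thm:cut_nodes} splits the cut dominant into two glued pieces $(G_i,T_i)$ which, because $T=V(G)$ forces $T_i=V(G_i)$, are again cut dominants, and the divisibility $\gamma\mid\operatorname{lcm}(\gamma_{c_1},\gamma_{c_2})$ forces one piece to retain right-hand side at least three; iterating replaces $G$ by a $2$-connected subgraph (hence minor) carrying a $\gamma\ge 3$ facet, and any prism or pyramid found in it is a minor of the original $G$. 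I would likewise use that contracting an edge never decreases the minimum cut value (contraction only restricts the admissible cuts, on which the contracted edge contributes nothing) to clean up low-degree vertices while keeping the minimum cut at least three.

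The structural engine is the laminar theory of Sections~\ref{sec:laminar}--\ref{sec:root bases}. By Theorem~\ref{thm:laminarRootBasis} the facet has a laminar root basis, so I would fix a laminar family $\mathcal{L}$ of minimum ($\gamma$-)cuts with $|\mathcal{L}|=|E(G)|$ spanning $\R^{E(G)}$, and I would repeatedly invoke the uncrossing relation of Lemma~\ref{lem:uncrossing} to keep the supply of roots closed under the intersection/union and set-difference operations. Combining this with Remark~\ref{rem:facets_first_properties} gives strong local control: every edge lies in some root and each root induces a connected set on both sides, no edge is a bridge so every single edge has weight strictly below $\gamma$, while every star $\delta(v)$ is a cut and hence has weight at least $\gamma\ge 3$. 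The whole difficulty is now concentrated in the tension between these facts and the integrality of $c$: there must be \emph{many} linearly independent tight cuts (one per edge), yet each individual cut is ``thick'' (weight $\gamma\ge 3$) and no edge alone can carry that weight.

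From here I would try to read off a prism or a pyramid. The plan is to show that the abundance of independent tight cuts, together with $\gamma\ge 3$, forces a crossing configuration among the roots: either three tight cuts that pairwise cross and whose common refinement partitions a $3$-connected core into the two triangles and the three connecting edges of a prism, or a configuration organized around a single high-weight vertex that yields the apex and rim of a pyramid. Concretely I would contract the blocks lying strictly inside the minimal members of $\mathcal{L}$ and between successive levels of the laminar family (a sequence of contractions, each keeping the minimum cut at least three), reducing $G$ to a small core whose tight-cut incidence pattern is exactly that of a prism or a pyramid, and then exhibit the corresponding minor model. Throughout I would have to rule out the small high-connectivity impostors such as $K_4$, which meets the minimum-cut bound ($\gamma=3$ with unit weights) but fails the facet count, since its at most four weight-three cuts cannot span $\R^{6}$.

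The main obstacle is precisely this extraction step: proving that $|E(G)|$ linearly independent minimum cuts of common weight $\gamma\ge 3$, with $c$ integral of greatest common divisor one, cannot occur without a prism or pyramid minor being present. This is a genuinely combinatorial classification that must weld together three ingredients that do not automatically cooperate---the integrality and gcd-one normalization of the weights, the rank bookkeeping of the laminar root basis under uncrossing, and the routing of an actual prism or pyramid minor through the contracted core---and organizing the case distinctions so that these two graphs emerge as the only unavoidable obstructions is where essentially all of the work resides.
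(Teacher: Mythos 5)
There is a genuine gap, and you in fact name it yourself: the entire burden of the statement lies in the step you defer to the end, namely showing that a root basis of $|E(G)|$ linearly independent tight cuts of common weight $\gamma\ge 3$ forces a prism or pyramid minor. Everything before that point is sound but essentially preparatory. The reduction to a two-connected piece via Theorem~\ref{thm:cut_nodes} works (with $T=V(G)$ the glued pieces again have $T_i=V(G_i)$, and the divisibility of $\gamma$ into $\operatorname{lcm}(\gamma_{c_1},\gamma_{c_2})$ does force one piece to keep $\gamma_i\ge 3$), and the local facts from Remark~\ref{rem:facets_first_properties} and the laminar root basis from Theorem~\ref{thm:laminarRootBasis} are available. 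But the proposed ``extraction'' step contains a concrete flaw beyond mere incompleteness: contracting blocks inside and between levels of the laminar family preserves the lower bound on the minimum cut, yet it does \emph{not} preserve facet-definingness or the rank bookkeeping. Contraction changes the dimension of the edge space, tight cuts may cease to be tight or may become identified, and there is no argument that the contracted ``core'' inherits a spanning family of roots --- which is exactly the structure your plan needs in order to read off the prism or pyramid incidence pattern. So the plan, as stated, would stall at its decisive step; the admission that ``essentially all of the work resides'' there is accurate, and without that work the statement is unproven.

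For comparison: the paper itself does not prove this theorem at all --- it is imported verbatim from Conforti, Fiorini, and Pashkovich~\cite{ConfortiFioriniPashkovich2016} (their Theorem~5 and Lemma~4), and their proof proceeds quite differently from your sketch. Rather than uncrossing a laminar root basis down to a contracted core, they establish a structural characterization of graphs with no prism and no pyramid minor and show that for all such graphs every facet of the cut dominant has right-hand side at most two; the minor emerges from the structure theory of the forbidden-minor-free class, not from a rank argument on tight cuts. Your laminar-basis viewpoint is natural given this paper's machinery (and the sparsity bound $|E(G)|\le 2|V(G)|-3$ it yields for $T=V(G)$ is a genuine consequence), but turning it into a proof of the CFP theorem would amount to a new argument for a result that took a dedicated paper to establish, and your proposal does not supply that argument.
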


\begin{figure}[ht]\label{fig:prismPyramid}
	\centerline{%
		\begin{tikzpicture}

        
    \node [pp_node] (v1) at (90:2) {};
    \node [pp_node] (v2) at (210:2) {};        
    \node [pp_node] (v3) at (330:2) {};
    \node [pp_node] (w1) at (90:.7) {};
	\node [pp_node] (w2) at (210:.7) {};        
	\node [pp_node] (w3) at (330:.7) {};
		
	\draw [pp_edge] (v1) -- (v2);
	\draw [pp_edge] (v2) -- (v3);
	\draw [pp_edge] (v3) -- (v1);        
	\draw [pp_edge] (w1) -- (w2);
	\draw [pp_edge] (w2) -- (w3);
	\draw [pp_edge] (w3) -- (w1);        
	\draw [pp_edge] (v1) -- (w1);
	\draw [pp_edge] (v2) -- (w2);
	\draw [pp_edge] (v3) -- (w3);        
\end{tikzpicture}
		\hspace{1cm}
		\begin{tikzpicture}


    \node [pp_node] (v1) at (90:2) {};
	\node [pp_node] (v2) at (210:2) {};        
	\node [pp_node] (v3) at (330:2) {};
	\node [pp_node] (w1) at (90:1) {};
	\node [pp_node] (w2) at (210:1) {};        
	\node [pp_node] (w3) at (330:1) {};
	\node [pp_node] (u) at (0:0) {};	

	\draw [pp_edge] (v1) -- (v2);
	\draw [pp_edge] (v2) -- (v3);
	\draw [pp_edge] (v3) -- (v1);        
	\draw [pp_edge] (v1) -- (w1);
	\draw [pp_edge] (v2) -- (w2);
	\draw [pp_edge] (v3) -- (w3);        
	\draw [pp_edge] (u) -- (w1);        
	\draw [pp_edge] (u) -- (w2);        
	\draw [pp_edge] (u) -- (w3);        
\end{tikzpicture}	
	}
	\caption{The \emph{prism} (left) and the \emph{pyramid} (right).}
\end{figure}
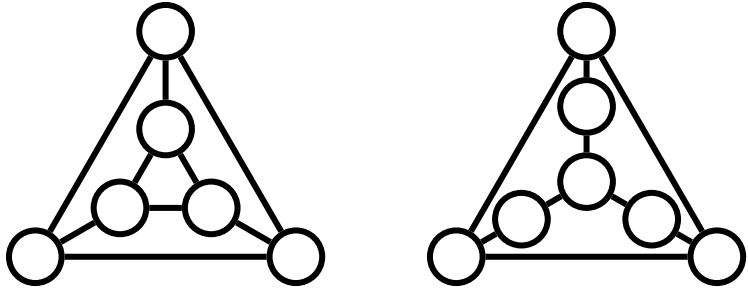

\begin{corollary}
	\label{cor:CFP}
	If $(G,T)$ is a facet inducing Steiner graph with facet weights $c$ that satisfy $\gamma:=\gamma_c(G,T)>2$ and  $cx\ge\gamma$ is valid for $\cutd(G)$, then  $G$ has a minor that is a prism or a pyramid.	
\end{corollary}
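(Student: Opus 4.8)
The plan is to deduce Corollary~\ref{cor:CFP} directly from Theorem~\ref{thm:CFP}, whose hypothesis concerns facets of the \emph{cut} dominant $\cutd(G)=\cutd(G,V(G))$ rather than of a Steiner cut dominant $\cutd(G,T)$. The single gap to bridge is that Corollary~\ref{cor:CFP} only assumes $cx\ge\gamma$ to be \emph{valid} for $\cutd(G)$, not that it defines a \emph{facet} of $\cutd(G)$; so the main task is to promote the inequality from a facet of $\cutd(G,T)$ (plus validity for the smaller polyhedron $\cutd(G)\subseteq\cutd(G,T)$) to a facet of $\cutd(G)$, after which Theorem~\ref{thm:CFP} applies verbatim and yields the prism or pyramid minor.

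First I would record that, because $T\subseteq V(G)$, we have the inclusion $\cutd(G)=\cutd(G,V(G))\subseteq\cutd(G,T)$ noted in the introduction, and both polyhedra live in the same full-dimensional ambient space $\R^{E(G)}$ with recession cone $\Rnn^{E(G)}$. Since $c$ provides facet weights for $(G,T)$, by Remark~\ref{rem:Gc}(2) there are $|E_c|=|E(G)|$ (using $G_c=G$) many $c$-minimum $T$-Steiner cuts that are linearly independent; call these the roots $\delta(S_1),\dots,\delta(S_{|E(G)|})$, each satisfying $c(\delta(S_i))=\gamma$. The key observation is that every $T$-Steiner cut is in particular a $V(G)$-Steiner cut (a genuine cut $\delta(S)$ with $\varnothing\neq S\neq V(G)$), hence its incidence vector lies in $\cut(G)=\cut(G,V(G))$ and therefore in $\cutd(G)$. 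Thus each root $\delta(S_i)$ is a point of $\cutd(G)$ satisfying the valid inequality $cx\ge\gamma$ at equality.

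Consequently I would argue that $cx\ge\gamma$ defines a facet of $\cutd(G)$: it is valid for $\cutd(G)$ by hypothesis, and the $|E(G)|$ linearly independent root vectors $\delta(S_1),\dots,\delta(S_{|E(G)|})$ are affinely independent points of the facet $\{x\in\cutd(G):cx=\gamma\}$ lying in $\R^{E(G)}$. (Linear independence of these $0/1$ vectors, none of which is the origin since $\gamma\ge 1$ forces each $c(\delta(S_i))>0$, gives affine independence of the full collection once one checks the standard rank argument, exactly as in the dimension count behind Remark~\ref{rem:Gc}.) Since $\cutd(G)$ is full-dimensional, a valid inequality whose equality set contains $|E(G)|$ affinely independent points defines a facet. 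Writing $c$ in minimum integer form (already assumed throughout) and using the hypothesis $\gamma>2$, Theorem~\ref{thm:CFP} then immediately yields that $G$ has a prism or a pyramid as a minor.

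I expect the only real subtlety, and hence the main obstacle, to be the passage from \emph{linear} independence of the root incidence vectors to the \emph{facet} conclusion for $\cutd(G)$: one must confirm that these roots, which are guaranteed to be roots and a basis for the Steiner dominant, remain valid tight points for the cut dominant and supply the correct affine dimension there. This is genuinely where the inclusion $\cutd(G)\subseteq\cutd(G,T)$ and the assumed validity of $cx\ge\gamma$ on $\cutd(G)$ are both needed simultaneously — validity guarantees the roots lie on the supporting hyperplane of $\cutd(G)$, and the inclusion guarantees they are actually points of $\cutd(G)$. Everything else is the bookkeeping of applying Theorem~\ref{thm:CFP}, which I would invoke as a black box.
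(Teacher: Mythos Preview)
Your approach is essentially the paper's, but you have the inclusion reversed throughout: since every $T$-Steiner cut is a $V(G)$-Steiner cut (as you correctly observe), one gets $\cut(G,T)\subseteq\cut(G)$ and hence $\cutd(G,T)\subseteq\cutd(G)$, not the other way around (this is exactly the monotonicity ``for $T_1\subseteq T_2$ we have $\cutd(G,T_1)\subseteq\cutd(G,T_2)$'' stated in the introduction). In particular, $\cutd(G)$ is the \emph{larger} polyhedron, so your sentence ``the inclusion guarantees they are actually points of $\cutd(G)$'' does not make sense as written; what actually guarantees that the roots lie in $\cutd(G)$ is precisely your direct observation that $T$-Steiner cuts are $V(G)$-Steiner cuts, i.e.\ the correct inclusion $\cutd(G,T)\subseteq\cutd(G)$.

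Once the inclusion is put the right way around, your root-basis argument is fine and coincides with the paper's proof, which compresses it to a one-line polyhedral fact: if $P\subseteq Q$ are both full-dimensional in $\R^{E(G)}$ and $cx\ge\gamma$ is facet defining for $P$ and valid for $Q$, then the face $\{x\in Q: cx=\gamma\}$ contains the facet of $P$ and is therefore itself a facet of $Q$. Your explicit $|E(G)|$ linearly independent roots are just the concrete witnesses for this dimension count; Theorem~\ref{thm:CFP} then applies exactly as you say.
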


\begin{proof}
	This follows readily from Theorem~\ref{thm:CFP} as 
	due to
	\[
		\dim(\cutd(G)) = |E(G)| = \dim(\cutd(G,T)) 
	\]
	every inequality that is facet defining for $\cutd(G,T)$ and   valid for $\cutd(G)$ is also facet defining for $\cutd(G)$ (which contains $\cutd(G,T)$).
\end{proof}

In order to establish Theorem~\ref{thm:facets_three_four_five}, it suffices to prove the following result (see Remark~\ref{rem:treecactus_Steiner}).

\begin{proposition}
	\label{prop:facets_three_four_five}
	For every irreducible facet inducing Steiner graph $(G,T)$ with $|T|\in\{3,4,5\}$ the graph $G$ is a tree or a cactus (in fact, $G$ is an edge or a cycle). 
\end{proposition}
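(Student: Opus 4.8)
The plan is to prove, by strong induction on $|V(G)|$, the slightly more general statement that \emph{every} facet inducing Steiner graph $(G,T)$ with $|T|\le 5$ is a tree or a cactus. The proposition then follows immediately: an irreducible such graph cannot be a tree (a tree with $|T|\ge 3$ leaves in $T$ has an internal cut node), and an irreducible cactus is connected with no cut node, hence $2$-connected, hence a single cycle; since irreducibility also forbids degree-two non-terminals, all of its nodes lie in $T$, so it is the cycle $C_{|T|}$.

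For the induction step I would first dispose of the \emph{reducible} graphs. If $G$ has a cut node $w$, then Theorem~\ref{thm:cut_nodes} presents $(G,T)$ as the gluing of two facet inducing Steiner graphs $(G_1,T_1)$ and $(G_2,T_2)$, each with $|T_i|\le |T|\le 5$ and with strictly fewer nodes; by the inductive hypothesis both are trees or cacti, and gluing two trees or cacti at a single node yields a tree or a cactus. If instead $G$ has a non-terminal $w$ of degree two, Theorem~\ref{thm:degree_two_nonterminals_come_from_subdividing} exhibits $(G,T)$ as a subdivision of a smaller facet inducing $(G',T)$, which is a tree or cactus by induction, and subdividing a tree or cactus preserves the property. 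So we may assume $(G,T)$ is irreducible.

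Next I would eliminate non-terminals of degree three. If $v\in V(G)\setminus T$ has degree three, I apply the \YDred\ at $v$ (Definition~\ref{def:YD}); by Lemma~\ref{lem:YD_facet} the result $(G',T)$ is facet inducing with one node fewer, hence a tree or cactus by induction, and Lemma~\ref{lem:YD_no_tree_or_cactus} then returns $(G,T)$ to the class of trees and cacti. Thus it remains to handle irreducible $(G,T)$ in which, by Remark~\ref{rem:degrees_irreducible}, \emph{every} non-terminal has degree at least four. Counting degrees gives $2|E(G)|\ge 2|T|+4(|V(G)|-|T|)$, and combining this with the bound $|E(G)|\le |V(G)|+|T|-3$ of Lemma~\ref{lem:bound} yields $|V(G)|\le 2|T|-3\le 7$. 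Only finitely many small $2$-connected graphs survive, and the remaining task is to show each of them is a cycle.

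This finite classification is the crux, and the main obstacle. For $|T|=3$ the bound forces $V(G)=T$ with $|E(G)|\le 3$, so $G=C_3$. For $|T|=4$ and $|T|=5$ I would argue in two steps. First, rule out a degree-$\ge 4$ non-terminal $u$: in the smallest cases one checks directly that $u$ would be a cut node (contradicting $2$-connectivity), while in the tight dense cases the equality clause of Lemma~\ref{lem:bound} makes every $\delta(t)$ a root, and Remark~\ref{rem:delta_nonterm} at $u$ then constrains the weights so tightly that the minimum cuts cannot span $\R^{E(G)}$ (for $K_{2,4}$-type and generalized theta configurations, every minimum cut is supported inside a single $u_1$--$u_2$ path, and these paths are collectively too short to span). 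This reduces to $V(G)=T$, where $\cutd(G,T)=\cutd(G)$; since prisms and pyramids have six nodes, $|V(G)|\le 5$ admits no such minor, so Corollary~\ref{cor:CFP} gives $\gamma\le 2$, the value $\gamma=1$ is excluded for a $2$-connected graph by Remark~\ref{rem:rhs_one}, and for $\gamma=2$ all edge weights equal one. A facet then forces the $2$-edge minimum cuts to span $\R^{E(G)}$: Lemma~\ref{lem:Hamiltonian_cycle} eliminates every graph that properly contains a Hamiltonian cycle (a single chord supplies no crossing pair), and the few non-Hamiltonian $2$-connected graphs on at most five nodes, notably $K_{2,3}$, are discarded by checking that their minimum cuts span only a proper subspace. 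What remains are exactly the cycles $C_{|T|}$. I expect the degree-$\ge 4$ non-terminal configurations for $|T|=5$ to be the most delicate part, since there Corollary~\ref{cor:CFP} is not available for free and one must argue by hand that the minimum cuts fail to span.
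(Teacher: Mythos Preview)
Your overall architecture coincides with the paper's: reduce to the irreducible case, eliminate degree-three non-terminals via the \YDred\ (Lemmas~\ref{lem:YD_facet} and~\ref{lem:YD_no_tree_or_cactus}), combine the degree count with Lemma~\ref{lem:bound} to bound $|V(G)|\le 2|T|-3$, and then dispatch finitely many small configurations using Corollary~\ref{cor:CFP} and Lemma~\ref{lem:Hamiltonian_cycle}. This is exactly the paper's route; the only substantive difference is how the endgame is organised.

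The gap is in your treatment of the configurations that \emph{do} contain a degree-four non-terminal (in the paper's enumeration these are the cases $(\tau,n,m)=(5,6,8)$ and $(5,7,9)$). You propose to show directly, via Remark~\ref{rem:delta_nonterm} and the equality clause of Lemma~\ref{lem:bound}, that the roots fail to span. That sketch is too thin: for the $(5,6,8)$ graphs there is a single non-terminal of degree four, and nothing in Remark~\ref{rem:delta_nonterm} immediately forces a linear dependency. The paper's decisive trick---which your proposal does not mention---is to verify that the facet-defining inequality for $\cutd(G,T)$ is in fact valid for the \emph{full} cut dominant $\cutd(G)$, by checking the handful of cuts that separate no terminals (e.g.\ showing $c(\delta(v))\ge\gamma$ for the non-terminal $v$ via the tight terminal-star equations~\eqref{eq:tightnessInCases}). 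Once this is done, Corollary~\ref{cor:CFP} applies (none of these small graphs has a prism or pyramid minor), forcing $\gamma=2$ and $c=\mathbb{1}$, after which the contradiction is immediate. Without this ``lift to $\cutd(G)$'' step you are stuck precisely at the point you yourself flag as delicate.

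A smaller issue: for $(\tau,n,m)=(5,5,7)$ your appeal to Lemma~\ref{lem:Hamiltonian_cycle} (``a single chord supplies no crossing pair'') does not close the case, since $m-n=2$ allows two chords that may well cross; the paper instead observes that a degree-$\ge 3$ terminal forces $\gamma\ge 3$ and then invokes Theorem~\ref{thm:CFP} directly.
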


\begin{proof}
    \replaced[comment={\revrem{2}{37(a)}}]{Suppose that the claimed statement is not true. }{Supposing that the statement does not hold, we}
    \added{We then} choose $(G,T)$ to be 
	 an irreducible facet inducing Steiner graph  with 
	$
		\tau := |T|\in\{3,4,5\}
	$
	such that $G$ is neither a tree nor a cactus, where we make our choice such that $n := |V(G)|$ is 
 \replaced{as small as possible. }{minimal.} Let $c$ be facet weights for $(G,T)$ and $\gamma:=\gamma_c(G,T)$.

	Lemma~\ref{lem:bound}   provides the following upper bound on the number $m:=|E(G)|$ of edges of $G$:
	\begin{equation}
		\label{eq:m_ub}
		m \le n+\tau-3
\end{equation}

	On the other hand, as $G$ is two-node connected, but not a cycle, we have the  lower bound
	\begin{equation}
		\label{eq:m_lb1}
		m \ge n+1
	\end{equation}
	on the number of edges
	(see the remarks on ear decompositions below). Note that those two bounds already imply $\tau\ge 4$. 

	Clearly, due to the irreducibility, all terminal nodes have degree at least two and all non-terminal nodes have degree at least three. 
    In fact, by the minimality of $n$ we even deduce 
	\begin{equation}
		\label{eq:deg_bound_non_term}
		|\delta(v)| \ge 4\quad\text{for all }v \in V(G)\setminus T
	\end{equation}
    from Lemma~\ref{lem:YD_facet} and Lemma~\ref{lem:YD_no_tree_or_cactus}.  This in particular implies 
	\[
		2m = \sum_{v\in V(G)} |\delta(v)| \ge 2n + 2(n-\tau)\,,
	\]
	hence
	\begin{equation}
		\label{eq:m_lb2}
		m \ge 2n-\tau\,.
	\end{equation}
	Note that \eqref{eq:m_ub} and \eqref{eq:m_lb2} imply 
	    $n \le 2\tau-3$.

	The system \eqref{eq:m_ub}, \eqref{eq:m_lb1}, \eqref{eq:m_lb2} has the following seven integral solutions with  $3\le \tau\le 5$ and $n\ge \tau$:
\[
\begin{array}{c|ccc}
	& \tau & n & m \\
	\hline
	(1) & 4 & 4 & 5 \\
	(2) & 4 & 5 & 6 \\
	(3) & 5 & 5 & 6 \\
	(4) & 5 & 5 & 7 \\
	(5) & 5 & 6 & 7 \\
	(6) & 5 & 6 & 8 \\
	(7) & 5 & 7 & 9  
\end{array}
\]
The cases (1), (3), and (4) (i.e., the ones with $n=\tau$) can be ruled out immediately by the result of Conforti, Fiorini, and Pashkovich cited above. Indeed, in those cases we have $T = V(G)$, and due to $m\ge n+1$ (see~\eqref{eq:m_lb1}) there must be a node (thus, a terminal)  of degree at least three, which then implies $\gamma \ge 3$. Hence, according to Theorem~\ref{thm:CFP}, $G$ would have a prism or a pyramid as a minor in each of those cases, which, however, is not true as they all satisfy $n \le 5$.

In order to enumerate the possible graphs for each of the remaining four parameter combinations, we exploit the 
well-known fact that the two-node connected graph $G$ can be constructed by means of an \emph{ear decomposition} (Whitney~\cite{Whitney1931}): Starting from an arbitrary \emph{initial cycle} in $G$, we repeatedly add \emph{ears}, i.e.,  paths with at least one edge whose end nodes are disjoint nodes in the part of $G$ that has already been constructed, and whose inner nodes have not yet appeared in the construction so far. \added[comment={\revrem{2}{37d}}]{Note that the initial cycle is not considered to be an ear.} It is possible to  arrange the construction such that ears without inner nodes appear only after all nodes have shown up. The number of ears in an ear decomposition equals $m-n$. 
 
A first consequence of the existence of an ear-decomposition of $G$ is the following. As we have already ruled out the case $n = \tau$, the graph $G$ has at least one non-terminal node. Thus,  according to~\eqref{eq:deg_bound_non_term} it has a node with degree at least four, hence any ear decomposition  has at least two ears. From this we conclude
$m-n \ge 2$, 
which rules out  cases (2) and (5).

Therefore, we are left with the task to show that cases (6) and (7) cannot occur. As in both of those cases we have $m-n=2$, every ear decomposition of $G$ has exactly two ears. In  particular, no node  degree  exceeds four, thus all non-terminal nodes have degree equal to four (again, due to~\eqref{eq:deg_bound_non_term}). 
Furthermore,  both remaining cases  satisfy the equation
\begin{equation}
	\label{eq:equalityInCases}
	m = n+\tau-3\,,
\end{equation}
hence we know
\begin{equation}
	\label{eq:tightnessInCases}
		c(\delta(t)) = \gamma\quad\text{for all }t \in T
\end{equation}
according to Lemma~\ref{lem:bound}, i.e., every terminal singleton defines a root.
We conclude the proof by enumerating the graphs that remain to be considered and derive a specific contradiction for each of them.

\paragraph{Case (6):} $\tau = 5$, $n=6$, $m=8$

	Among the cycles that contain the non-terminal node $v \in V(G)\setminus T$, we choose one, say $C$,  of maximal length as the initial cycle of an ear decomposition of $G$. 
\replaced{If the cycle $C$ is  Hamiltonian, then $G$ consists of $C$ and two more edges $e_1$ and $e_2$ that share the non-terminal node. For  every root $\delta(S)$ both $S$ and its complement induce connected subgraphs of $G$ (see Proposition~\ref{prop:facets_first_properties}). This  implies $|\delta(S) \cap C| = 2$ for every root. Hence the facet defined by $(G,T)$ and $c$ is contained in the facet defined by $x(C)\ge 2$, which contradicts the fact $c(e_1),c(e_2) > 0$. }{Again, $C$ is not a Hamiltonian cycle (since the two edges in $E(G)\setminus C$ would both be incident to the non-terminal node of degree four, thus they would not cross w.r.t. $C$, contradicting Lemma~\ref{lem:Hamiltonian_cycle}).} 

Therefore,  we have $|C|\le 5$. From the maximality property of $C$ we find $|C|\ge 4$ \added[comment={\revrem{2}{37e}}]{(as adding an ear to a triangle creates a cycle of length at least four)}. 
In case of $|C|=4$, again due to the maximality property of $C$, $G$ must be the graph $G_1$, and if we have $|C|=5$, then $G$ must be one of the graphs $G_2$ and $G_3$ (see Fig.~\ref{fig:C}).

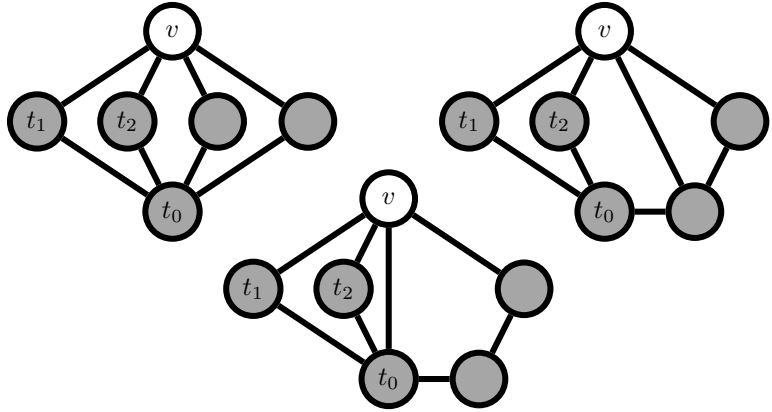
\begin{figure}[ht]
    \begingroup
    \renewcommand{\scale}{.6}
	\centerline{%
		\begin{tikzpicture}[scale=\scale]

        
    \node [case_nonterm] (v) at (0,2) {$v$};
    \node [case_term] (t1) at (-3,0) {$t_1$};
    \node [case_term] (t2) at (-1,0) {$t_2$};
    \node [case_term] (t3) at (1,0) {};
    \node [case_term] (t4) at (3,0) {};
    \node [case_term] (t0) at (0,-2) {$t_0$};
    
    \draw [case_edge] (v) -- (t1);
    \draw [case_edge] (v) -- (t2);
    \draw [case_edge] (v) -- (t3);
    \draw [case_edge] (v) -- (t4);
    \draw [case_edge] (t0) -- (t1);
	\draw [case_edge] (t0) -- (t2);
	\draw [case_edge] (t0) -- (t3);
	\draw [case_edge] (t0) -- (t4);
\end{tikzpicture}
		\hspace{1cm}
		\begin{tikzpicture}[scale=\scale]

        
    \node [case_nonterm] (v) at (0,2) {$v$};
    \node [case_term] (t1) at (-3,0) {$t_1$};
    \node [case_term] (t2) at (-1,0) {$t_2$};
    \node [case_term] (t3) at (2,-2) {};
    \node [case_term] (t4) at (3,0) {};
    \node [case_term] (t0) at (0,-2) {$t_0$};
    
    \draw [case_edge] (v) -- (t1);
    \draw [case_edge] (v) -- (t2);
    \draw [case_edge] (v) -- (t3);
    \draw [case_edge] (v) -- (t4);
    \draw [case_edge] (t0) -- (t1);
	\draw [case_edge] (t0) -- (t2);
	\draw [case_edge] (t0) -- (t3);
	\draw [case_edge] (t3) -- (t4);
\end{tikzpicture}
	}
	\vspace{-1cm}
	\centerline{%
		\begin{tikzpicture}[scale=\scale]

        
    \node [case_nonterm] (v) at (0,2) {$v$};
    \node [case_term] (t1) at (-3,0) {$t_1$};
    \node [case_term] (t2) at (-1,0) {$t_2$};
    \node [case_term] (t3) at (2,-2) {};
    \node [case_term] (t4) at (3,0) {};
    \node [case_term] (t0) at (0,-2) {$t_0$};
    
    \draw [case_edge] (v) -- (t1);
    \draw [case_edge] (v) -- (t2);
    \draw [case_edge] (v) -- (t0);
    \draw [case_edge] (v) -- (t4);
    \draw [case_edge] (t0) -- (t1);
	\draw [case_edge] (t0) -- (t2);
	\draw [case_edge] (t0) -- (t3);
	\draw [case_edge] (t3) -- (t4);
\end{tikzpicture}
	}
	\endgroup
	\caption{The graphs $G_1$ (top left), $G_2$ (top right\added{)}, and $G_3$ (bottom). Terminals are gray.}
	\label{fig:C}
\end{figure}

Using~\eqref{eq:tightnessInCases}, we find that for each of the three graphs  
\[
	c(\delta(v)) \ge c(\delta(t_1)) + c(\delta(t_2)) - c(\delta(t_0)) = \gamma
\]
holds. 
Thus, in each of the three cases the inequality $cx\ge\gamma$ is 
\replaced[comment={\revrem{1}{15}}]{also }{even} valid for $\cutd(G)$ (as $\delta(v)$ is the only non-trivial cut in $G$ that is not a $T$-Steiner cut). Since none of the graphs $G_1$, $G_2$, and $G_3$ has a prism or a pyramid as a minor (due to $m=8$), Corollary~\ref{cor:CFP} implies \replaced[comment={\revrem{2}{37f}}]{$\gamma\le 2$}{ $\gamma = 2$}. However, each of the three graphs has a terminal $t$ of degree larger than two, which then  contradicts~\eqref{eq:tightnessInCases}. 

\paragraph{Case (7):} $\tau=5$, $n=7$, $m=9$

Due to $m=n+2$, and as 
	 both non-terminal nodes $v_1,v_2 \in V(G)\setminus T$ have degree equal to four, all terminal nodes have degrees equal to  two.
	 Therefore,  an arbitrary ear decomposition will have $v_1$ and $v_2$ in its initial cycle with both ears having $v_1$ and $v_2$ as their end nodes. 
	Thus, $G$ consists of four disjoint paths each having $v_1$ and $v_2$ as its end node, hence $G$ is one of the  graphs shown in Fig.~\ref{fig:D}.

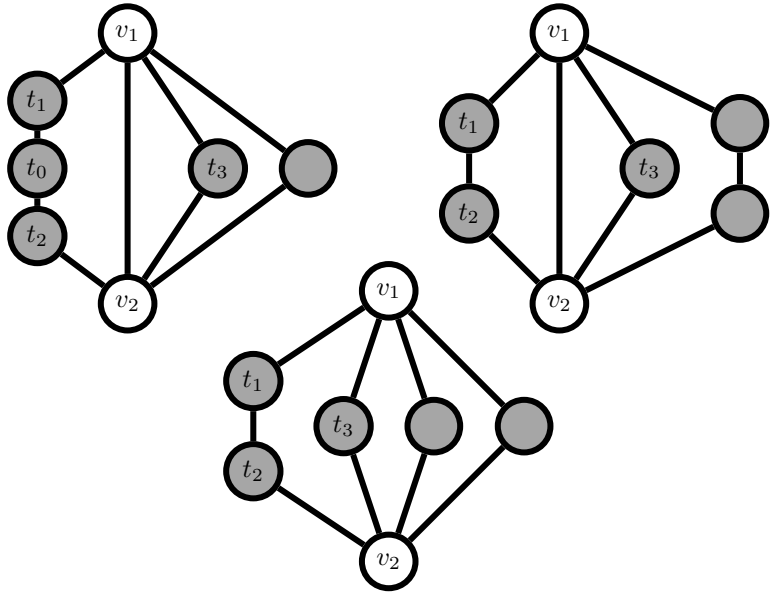
\begin{figure}[ht]
    \begingroup
    \renewcommand{\scale}{.6}
	\centerline{%
	    \begin{tikzpicture}[scale=\scale]

        
    \node [case_nonterm] (v1) at (0,3) {$v_1$};
    \node [case_nonterm] (v2) at (0,-3) {$v_2$};
    \node [case_term] (t1) at (-2,1.5) {$t_1$};
    \node [case_term] (t0) at (-2,0) {$t_0$};
    \node [case_term] (t2) at (-2,-1.5) {$t_2$};
    \node [case_term] (t3) at (2,0) {$t_3$};
    \node [case_term] (t4) at (4,0) {};
    
    \draw [case_edge] (v1) -- (v2);
    \draw [case_edge] (v1) -- (t1);
    \draw [case_edge] (v1) -- (t3);
    \draw [case_edge] (v1) -- (t4);
    \draw [case_edge] (v2) -- (t2);
    \draw [case_edge] (v2) -- (t3);
	\draw [case_edge] (v2) -- (t4);
	\draw [case_edge] (t0) -- (t1);
	\draw [case_edge] (t0) -- (t2);
\end{tikzpicture}
	    \hspace{1cm}
	    \begin{tikzpicture}[scale=\scale]

        
    \node [case_nonterm] (v1) at (0,3) {$v_1$};
    \node [case_nonterm] (v2) at (0,-3) {$v_2$};
    \node [case_term] (t1) at (-2,1) {$t_1$};
    \node [case_term] (t2) at (-2,-1) {$t_2$};
    \node [case_term] (t4) at (4,1) {};
    \node [case_term] (t5) at (4,-1) {};
    \node [case_term] (t3) at (2,0) {$t_3$};
    
    \draw [case_edge] (v1) -- (v2);
    \draw [case_edge] (v1) -- (t1);
    \draw [case_edge] (v1) -- (t4);
    \draw [case_edge] (v1) -- (t3);
    \draw [case_edge] (v2) -- (t2);
    \draw [case_edge] (v2) -- (t5);
	\draw [case_edge] (v2) -- (t3);
	\draw [case_edge] (t1) -- (t2);
	\draw [case_edge] (t4) -- (t5);
\end{tikzpicture}
	}
    \vspace{-1cm}
    \centerline{%
    	\begin{tikzpicture}[scale=\scale]

        
    \node [case_nonterm] (v1) at (0,3) {$v_1$};
    \node [case_nonterm] (v2) at (0,-3) {$v_2$};
    \node [case_term] (t1) at (-3,1) {$t_1$};
    \node [case_term] (t2) at (-3,-1) {$t_2$};
    \node [case_term] (t3) at (-1,0) {$t_3$};
    \node [case_term] (t4) at (1,0) {};
    \node [case_term] (t5) at (3,0) {};
    
    \draw [case_edge] (v1) -- (t1);
    \draw [case_edge] (v1) -- (t3);
    \draw [case_edge] (v1) -- (t4);
    \draw [case_edge] (v1) -- (t5);
    \draw [case_edge] (v2) -- (t2);
    \draw [case_edge] (v2) -- (t3);
	\draw [case_edge] (v2) -- (t4);
	\draw [case_edge] (v2) -- (t5);
	\draw [case_edge] (t1) -- (t2);

\end{tikzpicture}
    }
    \endgroup
\caption{The graphs $G_4$ (top left) , $G_5$ (top right), and $G_6$ (bottom). Terminals are dark.}
	\label{fig:D}
\end{figure}

From the equations in~\eqref{eq:tightnessInCases} we deduce
$
	c(t_1v_1) = c(t_2v_2)
$
for each of the graphs $G_4$, $G_5$, and $G_6$ (where for $G_4$ we additionally exploit $\added{c(t_1v_1)+c(t_0t_2)=}c(\delta\{t_0,t_1\})\ge \gamma$ and $\added{c(t_2v_2)+c(t_1t_0)=}c(\delta\{t_0,t_2\})\ge \gamma$\added[comment={\revrem{2}{37g}}]{, which in fact together with~\eqref{eq:tightnessInCases} implies $c(t_1v_1) = c(t_2v_2) = \gamma/2$}). 
For $G_4$ we thus have
$
	c(\delta(v_1)) = c(\delta(\{v_1,t_1,t_0,t_2\})) \ge \gamma
$
\replaced{and}{as well as}
$
c(\delta(v_2)) = c(\delta(\{v_2,t_2,t_0,t_1\})) \ge \gamma
$\replaced{. For both }{, and for}
$G_5$ and $G_6$ we find
$
c(\delta(v_1)) = c(\delta(\{v_1,t_1,t_2\})) \ge \gamma
$
as well as
$
c(\delta(v_2)) = c(\delta(\{v_2,t_2,t_1\})) \ge \gamma
$.
 
As additionally
$
	c(\delta\{v_1,v_2\}) \ge c(\delta(t_3)) = \gamma
$
holds for $G_4$, $G_5$, and $G_6$, the inequality $cx\ge\gamma$ is  in fact valid for $\cutd(G,V(G))$. Since none of the graphs $G_4$, $G_5$, and $G_6$  has a prism or a pyramid as a minor (as we have $m=9$, and each of the three graphs has only two nodes of degree larger than two), Corollary~\ref{cor:CFP}  implies \replaced[comment={\revrem{2}{37h}}]{$\gamma\le 2$}{ $\gamma = 2$} and $c$ is the all-one vector (since each edge is contained in at least one root\added[comment={\revrem{2}{37h}}]{, which has at least two edges since the graph is two-connected}). 

As for both $G_4$ and $G_5$ the edge $v_1v_2$ is not contained in any cut with two edges, we conclude $G=G_6$. However, the only cuts with exactly two edges in $G_6$ are the five cuts $\delta(t)$ for $t \in T$ and the cut $\delta(\{t_1,t_2\})$ which contradicts the existence of a root basis (due to $m=9$). 
\end{proof}

We conclude this section by 
the following characterization of  the facet defining inequalities of Steiner degree at most five for cut dominants that follows immediately from 
Theorem~\ref{thm:facets_three_four_five}.

\begin{definition}
	If $H$ is a cactus and $C$ is a cycle in $H$, then we denote by $\deg(C)$ the number of nodes in $C$ that are connected to nodes outside of $C$ (i.e., they are cut nodes of $G$),  and we define the \emph{defect} of $C$  to be 
	\begin{equation*}
		\max\{0,3-\deg(C)\}\,.
	\end{equation*}
\end{definition}

\begin{corollary}
	\label{cor:Steiner_degree_at_most_five}
	For each connected graph $G$ the facet defining inequalities of Steiner degree at most five for  $\cutd(G)$  are the inequalities
	\begin{equation*}
		x(E(H)) \quad \ge \quad 1
	\end{equation*}
	for each spanning tree $H$ in $G$ with at most five leaves, and the inequalities
	\begin{equation*}
		\sum_{\stackrel{e \in E(H)}{e \text{ is in some cycle of $H$}}} x_e 
			+\sum_{\stackrel{e \in E(H)}{e \text{ is in no cycle of $H$}}} 2\cdot x_e 
			\quad \ge \quad 2
	\end{equation*}
	for each spanning cactus $H$ of $G$ for which the number of leaves plus the sum of the defects of the cycles is at most five (which in particular implies that $H$ has no more than three cycles). 
\end{corollary}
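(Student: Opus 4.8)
The plan is to read the classification off Theorem~\ref{thm:facets_three_four_five} after translating the intrinsic notion of a facet inducing Steiner graph into the extrinsic language of spanning subgraphs of $G$, and then to pin down the Steiner degree of each resulting facet as an explicit combinatorial quantity. First I would observe that a non-trivial facet defining inequality $cx\ge\gamma$ of $\cutd(G)$ in minimum integer form is valid for \emph{every} cut of $G$, so each star $\delta(v)$ must carry positive $c$-weight; hence its support $G_c$ is a \emph{spanning} connected subgraph $H$ of $G$. For any $T\subseteq V(G)=V(H)$, Remark~\ref{rem:Gc} shows that $cx\ge\gamma$ defines a facet of $\cutd(G,T)$ exactly when $(H,T)$ is a facet inducing Steiner graph, so the Steiner degree of the inequality is the least cardinality of such a $T$. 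By Theorem~\ref{thm:facets_three_four_five}, if this least cardinality is at most five then $(H,T)$ is a Steiner tree or a Steiner cactus, and conversely every such Steiner tree or cactus is facet inducing. The inclusion ``every Steiner degree at most five facet is of one of the two listed forms'' is therefore immediate; the remaining work is a two-case bookkeeping computation of the minimal $T$.

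In the Steiner-tree case $H$ is a spanning tree, the facet weights are all one, and the inequality is $x(E(H))\ge 1$; by Remark~\ref{rem:rhs_one} these are precisely the right-hand-side-one facets. Here I would argue that the minimal $T$ is exactly the set of leaves of $H$. A pendant edge at a leaf $\ell$ lies in only one cut of $G$ of $c$-weight one, namely $\delta(\ell)$; since every edge must belong to a root (Remark~\ref{rem:facets_first_properties}), $\delta(\ell)$ must be a $T$-Steiner cut, forcing $\ell\in T$. Conversely, taking $T$ equal to the leaf set already yields a root basis, since removing any edge of a tree splits it into two subtrees each of which still contains a leaf of $H$. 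Consequently the Steiner degree equals the number of leaves, so the facets of this type with Steiner degree at most five are exactly the inequalities $x(E(H))\ge 1$ for spanning trees $H$ with at most five leaves; validity for $\cutd(G)$ is clear since $H$ spans.

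In the Steiner-cactus case $H$ is a spanning cactus, the weights are one on cycle edges and two on bridges, and the right-hand side is two. Since $(H,T)$ facet inducing with $H$ a cactus must be a Steiner cactus (Remark~\ref{rem:treecactus_Steiner}), the minimal $T$ consists of all leaves (forced exactly as above) together with, for each cycle $C$, enough further vertices to bring the number of cut nodes or terminals lying on $C$ up to three; the number of extra vertices needed on $C$ is precisely $\max\{0,3-\deg(C)\}$, the defect of $C$. Because leaves lie on no cycle and two distinct cycles of a cactus meet only in a cut node, these contributions do not overlap, so the Steiner degree equals the number of leaves plus the sum of the defects. This is exactly the stated bound, and the converse holds because a spanning cactus equipped with such a $T$ is a Steiner cactus, hence facet inducing, the inequality again being valid for $\cutd(G)$. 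For the parenthetical remark I would give a short count on the block--cut tree: in the bridgeless case $\sum_C\deg(C)=2(k-1)$ for $k$ cycles, so $\sum_C\max\{0,3-\deg(C)\}\ge\sum_C(3-\deg(C))=k+2$, while bridges and pendant vertices only add to the leaf count; thus leaves plus defects at most five forces $k\le 3$.

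The main obstacle I expect is the Steiner-cactus count: justifying that each cycle genuinely requires three cut-nodes-or-terminals (so that the defect cannot be undercut) and simultaneously confirming that leaves and the per-cycle terminals are never shared, so that the Steiner degree equals, and does not merely bound, the number of leaves plus the sum of the defects. Everything else is routine, being either an immediate consequence of Theorem~\ref{thm:facets_three_four_five} or the elementary spanning-subgraph and fundamental-cut arguments sketched above.
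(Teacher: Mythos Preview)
Your proposal is correct and follows the same route the paper intends: the paper states only that the corollary ``follows immediately from Theorem~\ref{thm:facets_three_four_five}'', and what you have written is precisely the detailed unpacking of that implication---showing that $G_c$ is spanning, invoking Remark~\ref{rem:Gc} to identify the Steiner degree with the minimum size of a $T$ making $(G_c,T)$ facet inducing, applying Theorem~\ref{thm:facets_three_four_five}, and then computing that minimum for trees (leaves) and cacti (leaves plus defects) via Remark~\ref{rem:treecactus_Steiner}.

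One small inaccuracy in the parenthetical count: in the bridgeless case you assert $\sum_C \deg(C)=2(k-1)$, but in a cactus a cut vertex may lie on three or more cycles (e.g.\ three triangles sharing one vertex), so in general only $\sum_C \deg(C)\le 2(k-1)$ holds. Fortunately this is the direction you need, since it gives $\sum_C(3-\deg(C))\ge k+2$ and hence $k\le 3$. For the general case with bridges, the same block--cut tree estimate yields $\ell+\sum_C\max\{0,3-\deg(C)\}\ge 2b+3k-\text{(edges of BCT)}=b+2k-c+1\ge k+2$ (using $c\le k+b-1$), so the bound survives; your phrase ``bridges and pendant vertices only add to the leaf count'' understates this slightly, since attaching a bridge can also lower a cycle's defect, but the two effects never combine to beat $k+2$.
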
 

One consequence of Corollary~\ref{cor:Steiner_degree_at_most_five} is that the only vertices of the subtour relaxation polytope that have   Steiner degree at most five are the incidence vectors of the Hamiltonian cycles (i.e., the vertices of the traveling salesman polytope itself), which in fact have Steiner degree three.

\section{Conclusion}
\label{sec:conclusion}

\paragraph*{\bf More than five terminals.}

For six terminals or more the irreducible facet inducing Steiner graphs are considerably more involved than they are for up to five terminals (where we only have  single edges and cycles of length at most five according to Theorem~\ref{thm:facets_three_four_five}). In fact, at this point we do not know the  complete list of  irreducible facet inducing Steiner graphs with six terminals. Figure~\ref{fig:irr6} shows a complete list of those ones with six or seven nodes\added[comment={\revrem{2}{38}}]{ (obtained from enumeration via computer)}. 
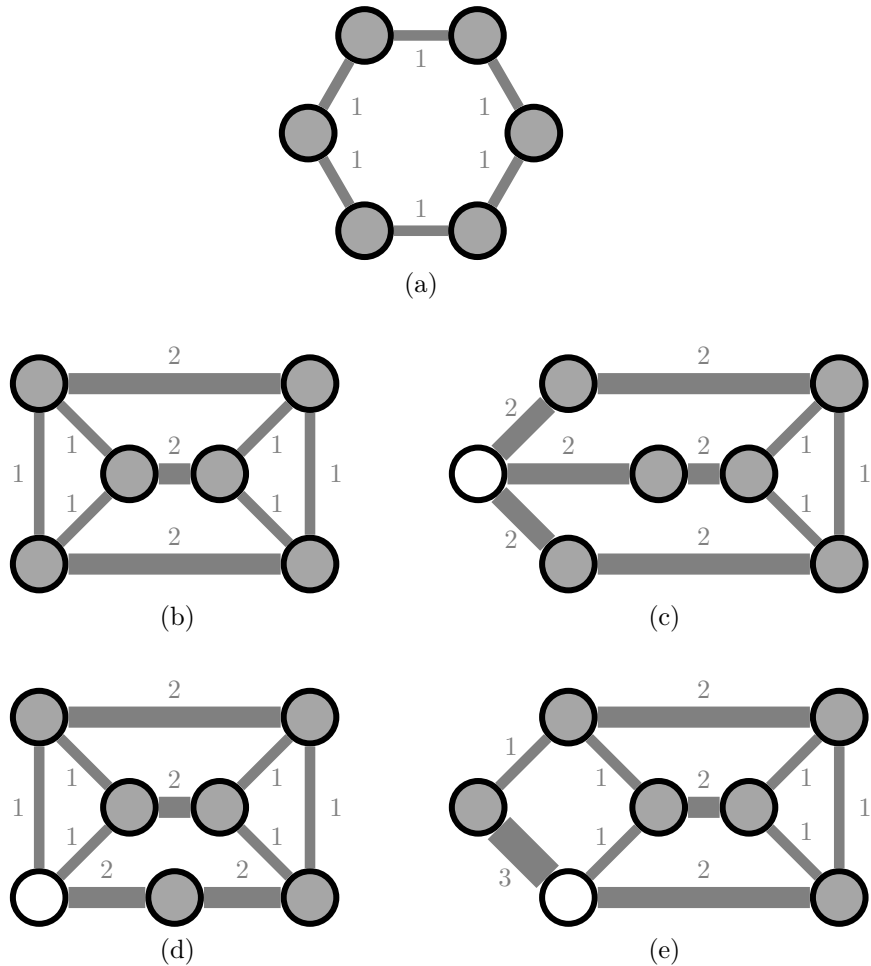
\begin{figure}[ht]
    \renewcommand{\scale}{.6}
    \centering
    \parbox[b]{.49\textwidth}{%
        \centering
        \begin{tikzpicture}[scale=\scale]

        
    \node [term] (v1) at (0:2.5) {};
    \node [term] (v2) at (60:2.5) {};
    \node [term] (v3) at (120:2.5) {};
    \node [term] (v4) at (180:2.5) {};
    \node [term] (v5) at (240:2.5) {};
    \node [term] (v6) at (300:2.5) {};

    \draw [edge1] (v1) edge["$1$"] (v2);
    \draw [edge1] (v2) edge["$1$"] (v3);
    \draw [edge1] (v3) edge["$1$"] (v4);
    \draw [edge1] (v4) edge["$1$"] (v5);
    \draw [edge1] (v5) edge["$1$"] (v6);
    \draw [edge1] (v6) edge["$1$"] (v1);
\end{tikzpicture}\\
        \centering (a)
    }\\
    \vspace{.5cm}%
    \parbox[b]{.49\textwidth}{%
        \centering
        \begin{tikzpicture}[scale=\scale]

        
    \node [term] (v1) at (0,2) {};
    \node [term] (v2) at (6,2) {};
    \node [term] (v3) at (2,0) {};
    \node [term] (v4) at (4,0) {};
    \node [term] (v5) at (0,-2) {};
    \node [term] (v6) at (6,-2) {};

    \draw [edge2] (v1) edge["$2$"] (v2);
    \draw [edge2] (v3) edge["$2$"] (v4);
    \draw [edge2] (v5) edge["$2$"] (v6);
    \draw [edge1] (v1) edge node [below=2pt, near start] {$1$} (v3);
    \draw [edge1] (v3) edge node [above=2pt, near end] {$1$} (v5);
    \draw [edge1] (v5) edge["$1$"] (v1);
    \draw [edge1] (v2) edge node [below=2pt, near start] {$1$} (v4);
    \draw [edge1] (v4) edge node [above=2pt, near end] {$1$} (v6);
    \draw [edge1] (v6) edge node [right=2pt] {$1$} (v2);
\end{tikzpicture}\\
        \centering (b)
    }\hfill%
    \parbox[b]{.49\textwidth}{%
        \centering
        \begin{tikzpicture}[scale=\scale]

        
    \node [term] (v1) at (0,2) {};
    \node [term] (v2) at (6,2) {};
    \node [term] (v3) at (2,0) {};
    \node [term] (v4) at (4,0) {};
    \node [term] (v5) at (0,-2) {};
    \node [term] (v6) at (6,-2) {};
    \node [nonterm] (v7) at (-2,0) {};

    \draw [edge2] (v1) edge["$2$"] (v2);
    \draw [edge2] (v3) edge["$2$"] (v4);
    \draw [edge2] (v5) edge["$2$"] (v6);
    \draw [edge2] (v7) edge node [above=2pt, near start] {$2$} (v1);
    \draw [edge2] (v7) edge node [above=0pt] {$2$} (v3);
    \draw [edge2] (v7) edge node [below=2pt, near start] {$2$} (v5);
    \draw [edge1] (v2) edge node [below=2pt, near start] {$1$} (v4);
    \draw [edge1] (v4) edge node [above=2pt, near end] {$1$} (v6);
    \draw [edge1] (v6) edge node [right=2pt] {$1$} (v2);
\end{tikzpicture}\\
        \centering (c)
    }\\
    \vspace{.5cm}%
    \parbox[b]{.49\textwidth}{%
        \centering
        \begin{tikzpicture}[scale=\scale]

        
    \node [term] (v1) at (0,2) {};
    \node [term] (v2) at (6,2) {};
    \node [term] (v3) at (2,0) {};
    \node [term] (v4) at (4,0) {};
    \node [nonterm] (v5) at (0,-2) {};
    \node [term] (v6) at (6,-2) {};
    \node [term] (v7) at (3,-2) {};

    \draw [edge2] (v1) edge["$2$"] (v2);
    \draw [edge2] (v3) edge["$2$"] (v4);
    \draw [edge2] (v5) edge["$2$"] (v7);
    \draw [edge2] (v7) edge["$2$"] (v6);
    \draw [edge1] (v1) edge node [below=2pt, near start] {$1$} (v3);
    \draw [edge1] (v3) edge node [above=2pt, near end] {$1$} (v5);
    \draw [edge1] (v5) edge["$1$"] (v1);
    \draw [edge1] (v2) edge node [below=2pt, near start] {$1$} (v4);
    \draw [edge1] (v4) edge node [above=2pt, near end] {$1$} (v6);
    \draw [edge1] (v6) edge node [right=2pt] {$1$} (v2);
\end{tikzpicture}\\
        \centering (d)
    }\hfill%
    \parbox[b]{.49\textwidth}{%
        \centering
        \begin{tikzpicture}[scale=\scale]

        
    \node [term] (v1) at (0,2) {};
    \node [term] (v2) at (6,2) {};
    \node [term] (v3) at (2,0) {};
    \node [term] (v4) at (4,0) {};
    \node [nonterm] (v5) at (0,-2) {};
    \node [term] (v6) at (6,-2) {};
    \node [term] (v7) at (-2,0) {};

    \draw [edge2] (v1) edge["$2$"] (v2);
    \draw [edge2] (v3) edge["$2$"] (v4);
    \draw [edge2] (v5) edge["$2$"] (v6);
    \draw [edge1] (v1) edge node [below=2pt, near start] {$1$} (v3);
    \draw [edge1] (v3) edge node [above=2pt, near end] {$1$} (v5);
    \draw [edge3] (v5) edge node [below=4pt, very near end] {$3$} (v7);
    \draw [edge1] (v7) edge node [above=2pt, near start] {$1$} (v1);
    \draw [edge1] (v2) edge node [below=2pt, near start] {$1$} (v4);
    \draw [edge1] (v4) edge node [above=4pt, near end] {$1$} (v6);
    \draw [edge1] (v6) edge node [right=2pt] {$1$} (v2);
\end{tikzpicture}\\
        \centering (e)
    }
    \caption{The irreducible facet inducing Steiner graphs with six terminals and at most seven nodes (again, terminals are dark); the cycle (a) has right-hand-side two, the other ones (b)--(e) have right-hand-side four (note that (b) is a prism).}
    \label{fig:irr6}
\end{figure}
As one sees from the examples (c), (d), and (e) in Figure~\ref{fig:irr6}, in contrast to the situation with at most five terminals,  irreducible facet inducing Steiner graphs in general do have  non-terminal nodes as well. Note that (b) and (d) induce  vertices of the subtour relaxation polytope that have Steiner degree six.

\paragraph*{\bf (Non-)Uniqueness of facet weights.}
As a consequence of Theorem~\ref{thm:facets_three_four_five} (and the remarks made in connection with the definitions of Steiner trees and Steiner cacti inequalities) for up to five terminals the facet weights of each facet inducing Steiner graph are uniquely determined. 
\replaced[comment={\revrem{2}{40}}]{For more than five terminals the facet weights in general are no longer unique (see Figure~\ref{fig:nonunique_weights} for an example).}{We do not expect a similar result to hold for general numbers of terminals.}

\begin{figure}
    \renewcommand{\scale}{.6}
    \centering
    \begin{tikzpicture}[scale=\scale]

        
    \node [term] (v5) at (0,0) {};
    \node [nonterm] (v0) at (12,0) {};
    \node [term] (v4) at (10,2) {};
    \node [nonterm] (v6) at (2,4) {};
    \node [term] (v9) at (4,4) {};
    \node [nonterm] (v3) at (8,4) {};
    \node [term] (v2) at (10,6) {};
    \node [term] (v7) at (0,8) {};
    \node [term] (v8) at (6,8) {};
    \node [nonterm] (v1) at (12,8) {};

    \draw [edge2] (v5) edge["$2$"] (v0);
    \draw [edge2] (v5) edge["$2$"] (v6);
    \draw [edge3] (v0) edge node [above=3pt, near start] {$3$} (v4);
    \draw [edge1] (v4) edge node [above=2pt, near start] {$1$} (v3);
    \draw [edge2] (v6) edge["$2$"] (v9);
    \draw [edge2] (v9) edge["$2$"] (v3);
    \draw [edge1] (v0) edge["$1$"] (v1);
    \draw [edge2] (v6) edge["$2$"] (v7);
    \draw [edge3] (v3) edge node [below=3pt, near end] {$3$} (v2);
    \draw [edge1] (v2) edge node [below=2pt, near end] {$1$} (v1);
    \draw [edge2] (v7) edge["$2$"] (v8);
    \draw [edge2] (v8) edge["$2$"] (v1);
\end{tikzpicture}
    \hfill
    \begin{tikzpicture}[scale=\scale]

        
    \node [term] (v5) at (0,0) {};
    \node [nonterm] (v0) at (12,0) {};
    \node [term] (v4) at (10,2) {};
    \node [nonterm] (v6) at (2,4) {};
    \node [term] (v9) at (4,4) {};
    \node [nonterm] (v3) at (8,4) {};
    \node [term] (v2) at (10,6) {};
    \node [term] (v7) at (0,8) {};
    \node [term] (v8) at (6,8) {};
    \node [nonterm] (v1) at (12,8) {};

    \draw [edge3] (v5) edge["$3$"] (v0);
    \draw [edge1] (v5) edge["$1$"] (v6);
    \draw [edge2] (v0) edge node [above=2pt, near start] {$2$} (v4);
    \draw [edge2] (v4) edge node [above=2pt, near start] {$2$} (v3);
    \draw [edge2] (v6) edge["$2$"] (v9);
    \draw [edge2] (v9) edge["$2$"] (v3);
    \draw [edge1] (v0) edge["$1$"] (v1);
    \draw [edge3] (v6) edge["$3$"] (v7);
    \draw [edge2] (v3) edge node [below=2pt, near end] {$2$} (v2);
    \draw [edge2] (v2) edge node [below=2pt, near end] {$2$} (v1);
    \draw [edge1] (v7) edge["$1$"] (v8);
    \draw [edge3] (v8) edge["$3$"] (v1);
\end{tikzpicture}  
    \caption{An example of a pair of different facet weights for the same irreducible facet inducing Steiner graph on 10 nodes and six terminals. The example has been determined via computer search.} 
    \label{fig:nonunique_weights}
\end{figure}


\paragraph*{\bf Upwards validity.} 

Turning a non-terminal node into a terminal can turn a 
\replaced{facet defining }{facet-defining}
Steiner cut inequality into an invalid one, as the inequality arising from (b) in Figure~\ref{fig:irr6} by subdividing one of the triangle-edges by a non-terminal node shows. However, we are only aware of \replaced[comment={\revrem{2}{42}}]{examples }{exmples} exhibiting that effect where the non-terminal node has degree two. Therefore, one might ask the question whether every \emph{irreducible} facet defining Steiner cut inequality for $\cutd{(G,T)}$ in fact is valid (and thus facet defining) for $\cutd{(G)}$. Again, Theorem~\ref{thm:facets_three_four_five} at least shows that this holds for $|T|\le 5$. 

\paragraph*{\bf Computing the facets in polynomial time.}
It appears conceivable that one can, for every given Steiner graph $(G,T)$ with $|T|\le 5$, compute all Steiner subtrees and Steiner subcactii of $(G,T)$ in time that is bounded polynomially in the total size of in- and output (for a survey on enumerating $s$-$t$-paths and other structures see~ \cite{Grossi2016}). This would imply via  
Theorem~\ref{thm:facets_three_four_five}  that for Steiner graphs $(G,T)$ with $|T|\le 5$ one can compute a list \replaced[comment={\revrem{2}{43}}]{of }{if} all facet defining inequalities for $\cutd{(G,T)}$ in output-polynomial time. A general question would be whether Theorem~\ref{thm:bounded_number_terminals_irreducible} opens up possibilites for a corresponding output-polynomial time algorithm for every fixed number of terminals.  

\paragraph*{\bf More powerful operations.}
Theorem~\ref{thm:facets_three_four_five} shows that for each Steiner graph $(G,T)$ with $|T|\le 5$ the facet defining inequalities for $\cutd{(G,T)}$ can be constructed from the facet defining inequalities for $\cutd(K_{\tau})$ with $\tau \le |T|$
via iterated applications of the two operations \emph{gluing} and \emph{subdividing}. For more than five terminals, the corresponding result is not true, in general. Thus, the question for a more powerful set of operations arises that would allow for a similar result for arbitrary numbers of terminals. In fact, it may appear tempting to believe that the polyhedral structure of $\cutd{(G,T)}$ in some sense is a refinement of  the polyhedral structure of $\cutd{(K_{|T|})}$.

\ifMOR
    \bibliographystyle{informs2014} 
    \bibliography{SteinerCutDominants} 
\else
    \printbibliography
\fi

\end{document}